\titlespacing{\section}{0pt}{0pt}{0pt} 
\newtheorem{theorem}{Theorem}
\begin{document}

\author{Abhishek Das}
\title[Vanishing viscosity limits]{Explicit structure of the vanishing viscosity limits for the zero-pressure gas dynamics system initiated by the linear combination of a characteristic function and a $\delta$-distribution}

\address{Abhishek Das 
\newline 
National Institute of Science Education and Research Bhubaneswar,
P.O. Jatni, Khurda 752050, Odisha, India
\newline
An OCC of Homi Bhabha National Institute,
Training School Complex,
Anushaktinagar, Mumbai 400094, India
}

\email{abhishek.das@niser.ac.in, ad16.1992@gmail.com}

\subjclass[2010]{35F25, 35B25, 35L67, 35R05}

\keywords{Zero-pressure gas dynamics, interaction of waves}

\maketitle

\begin{abstract}

In this article, we consider the one-dimensional zero-pressure gas dynamics system

\[
u_t + \left( {u^2}/{2} \right)_x = 0,\ \rho_t + (\rho u)_x = 0
\]
in the upper-half plane with a linear combination of a characteristic function and a $\delta$-measure

\[
u|_{t=0} = u_a\ \chi_{ {}_{ \left( -\infty , a \right) } } + u_b\ \delta_{x=b},\ \rho|_{t=0} = \rho_c\ \chi_{ {}_{ \left( -\infty , c \right) } } + \rho_d\ \delta_{x=d}
\]
as initial data, where $a$, $b$, $c$, $d$ are distinct points on the real line ordered as $a < c < b < d$, and provide a detailed analysis of the vanishing viscosity limits for the above system utilizing the corresponding modified adhesion model

\[
u^\epsilon_t + \left({(u^\epsilon)^2}/{2} \right)_x =\frac{\epsilon}{2} u^\epsilon_{xx},\ \rho^\epsilon_t + (\rho^\epsilon u^\epsilon)_x = \frac{\epsilon}{2} \rho^\epsilon_{xx}.
\] 
For this purpose, we use suitable Hopf-Cole transformations and various asymptotic properties of the function erfc$: z \longmapsto \int_{z}^{\infty} e^{-s^2}\ ds$.
\end{abstract}

\section*{1. Introduction}

We consider the initial value problem for the zero-pressure gas dynamics system

\begin{equation}
u_t + \left( {u^2}/{2} \right)_x = 0,\ \rho_t + (\rho u)_x = 0,\ \left( x , t \right) \in \textbf{R}^1 \times \left( 0 , \infty \right),
\label{intro-1}
\end{equation}
under the assumption that the initial data consists of a linear combination of a characteristic function and a $\delta$-measure, more precisely,

\begin{equation}
u|_{t=0} = u_a\ \chi_{ {}_{ \left( -\infty , a \right) } } + u_b\ \delta_{x=b},\ \rho|_{t=0} = \rho_c\ \chi_{ {}_{ \left( -\infty , c \right) } } + \rho_d\ \delta_{x=d}.
\label{intro-2}
\end{equation}
Here $a$, $b$, $c$, $d$ are fixed with $a < c < b < d$ and $u_a$, $u_b$, $\rho_c$, $\rho_d$ are real constants. 

The system \eqref{intro-1} is an important system of partial differential equations finding applications in cosmology and having close relations with the Zeldovich approximation (\cite{z1}). \eqref{intro-1} is used to describe the evolution of matter in the expansion of universe as cold dust moving only under the effect of gravity. Here the objects under study, namely $u$ and $\rho$, respectively denote the velocity and the density of the particles, $x \in {\textbf{R}}^1$ denotes the space variable and $t>0$ denotes time. 

In \eqref{intro-1}, the first equation involving the velocity component $u$ alone is the Burgers equation. In general, we cannot expect to find solutions of this problem in the class of smooth functions even for smooth initial data and so we need the notion of weak solutions satisfying \eqref{intro-1} in the sense of distributions, namely

\begin{footnotesize}
\[
\begin{aligned}
\int_{0}^{\infty} \int_{-\infty}^{\infty} \left( u\ \phi_t + f(u)\ \phi_x \right)\ dx dt + \int_{-\infty}^{\infty} u(x,0)\ \phi (x,0)\ dx = 0,\ \phi \in C_c^\infty \left( \textbf{R}^1 \times [ 0 , \infty ) \right).
\end{aligned} 
\]
\end{footnotesize}
The initial value problem for the Burgers equation was studied by Hopf \cite{h1} using the method of vanishing viscosity, where the author has considered the problem

\[
\begin{aligned}
u_t + \left( \frac{u^2}{2} \right)_x &= \mu u_{xx},
\\
u|_{t=0} &= u_0
\end{aligned}
\]
for $\mu > 0$ with locally integrable initial data $u_0$ under the additional assumption that

\[
\lim_{|x| \rightarrow \infty} \frac{\int_{0}^{x} u_0 \left( \xi \right)\ d\xi}{x^2} = 0.
\] 
This resulted in an explicit formula

\[
\begin{aligned}
u(x,t) = \frac{\int_{-\infty}^{\infty}\ \frac{x-y}{t}\ e^{- \frac{F(x,y,t)}{2 \mu}}}{\int_{-\infty}^{\infty}\ e^{- \frac{F(x,y,t)}{2 \mu}}},
\end{aligned}
\]
where $F : (x,y,t) \longmapsto \frac{(x-y)^2}{2t} + \int_{0}^{y}\ u_0 \left( \xi \right)\ d\xi$. The subsequent aymptotic limit as $\mu \rightarrow 0$ was discussed in detail.

The work of Hopf \cite{h1} was generalized by Lax \cite{lax1} to general conservation laws of the form

\[
u_t + \left[ f(u) \right]_x = 0,
\]
under bounded measurable initial data $u|_{t=0} = u_0$, under the assumption that the flux function $f \in C^2$ is strictly convex $\left( f'' > 0 \right)$ with superlinear growth at infinity $\left( \lim_{y \rightarrow \infty} \frac{f(y)}{|y|} = \infty \right)$. It was observed that for each fixed $t>0$, except possibly for countably many $x \in \textbf{R}^1$, there exists a unique minimizer $y(x,t)$ for

\[
\begin{aligned}
\theta \left( x,y,t \right) := f^* \left( \frac{x-y}{t} \right) + \int_{0}^{y}\ u_0 \left( \xi \right) d\xi,
\end{aligned}
\]
where $f^* : z \longmapsto {\displaystyle{ \max_{p \in \textbf{R}^1} }} \{ pz - f(p) \}$ denotes the convex conjugate of $f$. 

A weak solution of the associated initial value problem could then be given by

\[
\begin{aligned}
\overline{u} (x,t) = \left( f^* \right)^{-1} \left( \frac{x-y(x,t)}{t} \right),
\end{aligned}
\]
which is defined pointwise a.e.

There are many published works For scalar conservation laws with bounded nonnegative measures as initial data (see Bertsch et. al. \cite{ber-sm-tr-tes-20}, Demengel and Serre \cite{dem-serre-91}, Liu and Pierre \cite{liu-pie-84} and the references given there). These works focus on scalar conservation laws of the form

\[
\begin{aligned}
u_{t}+ ( \phi(u) )_x = 0,
\end{aligned}
\]
with the initial data $u|_{t=0} = u_0$ at $t=0$ being a bounded Borel measure. The existence and uniqueness of solutions will then depend on the following factors:

\begin{enumerate}

	\item The initial data $u_0$ and its sign

	\item Whether the flux $\phi$ is odd or convex

\end{enumerate}
It is natural to expect the flux function $\phi$ will have a smoothing effect on the solution by virtue of its strong nonlinearity. 

The vanishing viscosity method has been previously applied by Joseph \cite{ktj-93} for the system \eqref{intro-1} under Riemann-type initial data

\[
\begin{aligned}
\left( u(x,0) , \rho(x,0) \right) = \begin{cases}
\left( u_{{}_{L}} , \rho_{{}_{L}} \right),\ &x<0,
\\
\left( u_{{}_{R}} , \rho_{{}_{R}} \right),\ &x>0.
\end{cases}
\end{aligned} 
\]
The modified adhesion model corresponding to the given problem \eqref{intro-1}, that is

\[
u^\epsilon_t + \left( \frac{(u^\epsilon)^2}{2} \right)_x = \frac{\epsilon}{2} u^\epsilon_{xx},\ \rho^\epsilon_t + ( \rho^\epsilon u^\epsilon )_x = \frac{\epsilon}{2} \rho^\epsilon_{xx}
\]
was subsequently reduced to the new system

\[
\begin{aligned}
U_t^\epsilon + \frac{\left( U_x^\epsilon \right)^2}{2} = \frac{\epsilon}{2} U_{xx}^\epsilon,\ R_t^\epsilon + R_x^\epsilon U_x^\epsilon = \frac{\epsilon}{2} R_{xx}^\epsilon,
\end{aligned}
\] 
with new initial data

\[
\left( U^\epsilon(x,0) , R^\epsilon(x,0) \right) = \begin{cases}
\left( u_{{}_{L}} x , \rho_{{}_{L}} x \right),\ &x<0,
\\
\left( u_{{}_{R}} x , \rho_{{}_{R}} x \right),\ &x>0.
\end{cases}
\] 
It was observed that the distributional derivatives

\[
\left( \overline{u}^\epsilon , \overline{R}^\epsilon \right) := \left( U_x^\epsilon, R_x^\epsilon \right)
\]
with respect to the space variable $x$ solved the system \eqref{intro-1} under the prescribed Riemann-type initial data. 

The first step involved the linearization of the first equation in \eqref{intro-1} by the Hopf-Cole transformation

\[
V^\epsilon = e^{- \frac{U^\epsilon}{\epsilon}},
\]
resulting in the consideration of the new linear problem

\[
\begin{aligned}
V_t^\epsilon &= V_{xx}^\epsilon,
\\
V^\epsilon (x,0) &= \begin{cases}
e^{- \frac{u_{{}_{L}} x}{\epsilon}},\ &x < 0,
\\
e^{- \frac{u_{{}_{R}} x}{\epsilon}},\ &x > 0.
\end{cases}
\end{aligned}
\]
The second equation in \eqref{intro-1} involving $\rho$ was linearised by a modified Hopf-Cole transformation

\[
S^\epsilon = R^\epsilon\ e^{- \frac{U_\epsilon}{\epsilon}}
\]
The new linear problem to be considered was

\[
\begin{aligned}
S_t^\epsilon &= S_{xx}^\epsilon,
\\
S^\epsilon (x,0) &= \begin{cases}
\rho_{{}_{L}} x\ e^{- \frac{u_{{}_{L}} x}{\epsilon}},\ &x < 0,
\\
\rho_{{}_{R}} x\ e^{- \frac{u_{{}_{R}} x}{\epsilon}},\ &x > 0.
\end{cases}
\end{aligned}
\]
These linear problems were explicitly solved and $u^\epsilon$, $\rho^\epsilon$ were recovered through the relation

\[
\left( u^\epsilon , \rho^\epsilon \right) = \left( - \epsilon \cdot \frac{V_x^\epsilon}{V^\epsilon} , \left( \frac{S^\epsilon}{V^\epsilon} \right)_x \right),
\]
where $u^\epsilon$ and $\rho^\epsilon$ are related to the original system \eqref{intro-1} through the modified adhesion model

\[
\begin{aligned}
u^\epsilon_t + \left( \frac{(u^\epsilon)^2}{2} \right)_x &= \frac{\epsilon}{2} u^\epsilon_{xx},\ \rho^\epsilon_t + \left( \rho^\epsilon u^\epsilon \right)_x = \frac{\epsilon}{2} \rho^\epsilon_{xx},
\\
\left( u^\epsilon \left( x,0 \right) , \rho^\epsilon \left( x,0 \right) \right) &= \begin{cases}
\left( u_L , \rho_L \right),\ &{ x < 0 },
\\
\left( u_R , \rho_R \right),\ &{ x > 0 }.
\end{cases}
\end{aligned}
\]
The above strategy has been previously applied in

\begin{itemize}

	\item \cite{das-1-submitted} for the problem

\[
\begin{aligned}
u_t + \left({u^2}/{2} \right)_x &= 0,\ \rho_t + (\rho u)_x = 0,
\\
u|_{t=0} &= u_a\ \delta_{x=a} + u_b\ \delta_{x=b},\ \rho|_{t=0} = \rho_c\ \delta_{x=c} +\rho_d\ \delta_{x=d}.
\end{aligned}
\]
	
	\item \cite{das-ktj-submitted} in the context of the problem discussed in this article for the case $c=a$, $d=b$. 

\end{itemize}

\section*{2. Computation of the vanishing viscosity limits}
	
Let us now generalize the ideas discussed in the previous section for our problem.
\\
For each $\epsilon > 0$, suppose $\left( u^\epsilon , \rho^\epsilon \right)$ are approximate solutions for the problem

\begin{align}
u^\epsilon_t + \left( \frac{(u^\epsilon)^2}{2} \right)_x &= \frac{\epsilon}{2} u^\epsilon_{xx},\ \rho^\epsilon_t + ( \rho^\epsilon u^\epsilon )_x = \frac{\epsilon}{2} \rho^\epsilon_{xx},
\label{intro-3}
\\
u^\epsilon|_{t=0} &= u_a\ \chi_{ {}_{ \left( -\infty , a \right) } } + u_b\ \delta_{x=b},\ \rho^\epsilon|_{t=0} = \rho_c\ \chi_{ {}_{ \left( -\infty , c \right) } } + \rho_d\ \delta_{x=d}.
\label{intro-4}
\end{align}
First let us observe that if $\left( U^\epsilon , R^\epsilon \right)$ is a solution to the system

\begin{equation}
U_t^\epsilon + \frac{\left( U_x^\epsilon \right)^2}{2} = \frac{\epsilon}{2} U_{xx}^\epsilon,\ R_t^\epsilon + R_x^\epsilon U_x^\epsilon = \frac{\epsilon}{2} R_{xx}^\epsilon
\label{intro-5}
\end{equation}
under the initial conditions

\begin{equation}
\begin{aligned}
U^\epsilon (x,0) &= \begin{cases}
u_a (x-a),\ &x<a,
\\
0,\ &a<x<b,
\\
u_b,\ &x>b,
\end{cases}
\\
R^\epsilon (x,0) &= \begin{cases}
\rho_c (x-c),\ &x<c,
\\
0,\ &c<x<d,
\\
\rho_d,\ &x>d,
\end{cases}
\end{aligned}
\label{intro-6}
\end{equation}
then the distributional derivatives $\overline{u}^\epsilon := U_x^\epsilon$ and $\overline{\rho}^\epsilon := R_x^\epsilon$ in the space variable $x$ will solve the problem \eqref{intro-3} under the initial conditions \eqref{intro-4}.

The following theorem gives the detailed analysis of the asymptotic behavior of $\left( u^\epsilon , \rho^\epsilon \right)$ as $\epsilon \rightarrow 0$ in each of the regions $\left( -\infty , a \right)$, $\left( a , c \right)$, $\left( c , b \right)$, $\left( b , d \right)$ and finally $\left( d , \infty \right)$. Within each of these regions, the limit $\lim_{\epsilon \rightarrow 0}\ \left( u^\epsilon , \rho^\epsilon \right)$ is separately evaluated in the subregions $x > a + u_a t$ and $x < a + u_a t$ depending on the sign of $u_a$.

\begin{theorem}

Suppose $a$, $b$, $c$, $d$ are points on the real line ordered according to the inequalities $a < c < b < d$. Given real constants $u_a$, $u_b$, $\rho_c$ and $\rho_d$, let us consider the one-dimensional zero-pressure gas dynamics system

\[
u_t + \left( {u^2}/{2} \right)_x = 0,\ \rho_t + (\rho u)_x = 0,\ \left( x , t \right) \in {\textnormal{\textbf{R}}}^1 \times \left( 0 , \infty \right)
\] 
under the initial data

\[
u|_{t=0} = u_a\ \chi_{ {}_{ \left( -\infty , a \right) } } + u_b\ \delta_{x=b},\ \rho|_{t=0} = \rho_c\ \chi_{ {}_{ \left( -\infty , c \right) } } + \rho_d\ \delta_{x=d}.
\] 
Suppose $u^\epsilon$, $\rho^\epsilon$ are approximate solutions of the system

\[
\begin{aligned}
u^\epsilon_t + \left( \frac{(u^\epsilon)^2}{2} \right)_x &= \frac{\epsilon}{2} u^\epsilon_{xx},\ \rho^\epsilon_t + ( \rho^\epsilon u^\epsilon )_x = \frac{\epsilon}{2} \rho^\epsilon_{xx},
\\
u^\epsilon|_{t=0} &= u_a\ \chi_{ {}_{ \left( -\infty , a \right) } } + u_b\ \delta_{x=b},\ \rho^\epsilon|_{t=0} = \rho_c\ \chi_{ {}_{ \left( -\infty , c \right) } } + \rho_d\ \delta_{x=d}.
\end{aligned}
\]
Then the structure of the vanishing viscosity limit $\left( u , \rho \right) = \lim_{\epsilon \rightarrow 0} \left( u^\epsilon , \rho^\epsilon \right)$ can be explicitly described under various cases depending on the relative signs of $u_a$ and $u_b$ as follows:

\textbf{Case 1.} $u_a < 0$, $u_b > 0$

In this case, we consider the curves

\begin{itemize}

	\item[$(i)$] $r(s) := a + u_a s$,

	\item[$(ii)$] $p(s) := b + \sqrt{2 u_b s}$

\end{itemize}
defined for every $s \geq 0$. The explicit structure of $\lim_{\epsilon \rightarrow 0} \left( u^\epsilon , \rho^\epsilon \right)$ can then be described as follows:

\[
\begin{aligned}
u(x,t) &= \begin{cases}
u_a,\ &{ x < r(t) },
\\
\frac{x-a}{t},\ &{ x \in \Big( r(t) , a \Big) },
\\
0,\ &{ x \in \Big( a , b \Big) \cup \Big( p(t) , \infty \Big) },
\\
\frac{x-b}{t},\ &{ x \in \Big( b , p(t) \Big) },
\end{cases}
\\
\rho &= \rho_c \left( \chi_{ {}_{ {}_{ \left( -\infty , a + u_a t \right) } } } + \frac{4 \left( x-a \right) - 2 u_a t}{u_a t} \chi_{ {}_{ {}_{ \left( a + u_a t , a \right) } } } + \chi_{ {}_{ {}_{ \left( a , c \right) } } } \right) + \rho_d\ \delta_{ x = \gamma_{ {}_{d} } (t) }.
\end{aligned}
\]

\textbf{Case 2.} $u_a > 0$, $u_b > 0$

Consider the curves

\[
\begin{aligned}
l(s) &:= a + \frac{u_b}{u_a} + \frac{u_a}{2} \cdot s,
\\
\tilde{l} (s) &:= a + \frac{u_a}{2} \cdot s,
\\
r(s) &:= a + u_a s,
\\
p(s) &:= b + \sqrt{2 u_b s},
\\
q(s) &:= b+u_a s-\sqrt{2\ u_a \left( b-a \right) s},
\\
\gamma_{ {}_{a} } (s) &:= \begin{cases}
a + \frac{u_a}{2} \cdot s,\ &{ 0 \leq s \leq \frac{2 \left( b-a \right)}{u_a} },
\\
b + u_a s - \sqrt{2 u_a \left( b-a \right) s},\ &{ \frac{2 \left( b-a \right)}{u_a} \leq s \leq t_{ {}_{p,l} } } := \left( \frac{\sqrt{2 u_b} + \sqrt{2 u_a \left( b-a \right)}}{u_a} \right)^2,
\\
a + \frac{u_b}{u_a} + \frac{u_a}{2} \cdot s,\ &{ s \geq t_{ {}_{p,l} } },
\end{cases}
\\
\gamma_{ {}_{b} } (s) &:= \begin{cases}
b + \sqrt{2 u_b s},\ &{ 0 \leq s \leq t_{ {}_{p,l} } },
\\
a + \frac{u_b}{u_a} + \frac{u_a}{2} \cdot s,\ &{ s \geq t_{ {}_{p,l} } },
\end{cases}
\\
\gamma_{ {}_{c} } (s) &:= \begin{cases}
c,\ &{ 0 \leq s \leq \frac{2 \left( c-a \right)}{u_a} },
\\
\gamma_{ {}_{a} } (s),\ &{ s \geq \frac{2 \left( c-a \right)}{u_a} },
\end{cases}
\\
\gamma_{ {}_{d} } (s) &:= \begin{cases}
d,\ &{ 0 \leq s \leq \frac{(d-b)^2}{2 u_b} },
\\
\gamma_{ {}_{b} } (s),\ &{ s \geq \frac{(d-b)^2}{2 u_b} }
\end{cases}
\end{aligned}
\]
defined over $\left[ 0 , \infty \right)$. Then the limit $\lim_{\epsilon \rightarrow 0} \left( u^\epsilon , \rho^\epsilon \right)$ has the following explicit representation:

\[
\begin{aligned}
u(x,t) &= \begin{cases}
u_a,\ &{
\begin{aligned}
&x \in \Big( -\infty , a \Big) \cup \Big( \big( ( a , b ) \setminus \{ c \} \big) \cap \big( -\infty , \tilde{l} (t) \big) \Big) 
\\
&\cup \Big( \big( ( b , \infty ) \setminus \{ d \} \big) \cap \big( ( -\infty , \min{ \{ p(t) , q(t) , r(t) \} } ) \cup ( p(t) , \min{ \{ l(t) , q(t) , r(t) \} } ) \big)  \Big),
\end{aligned} }
\\
\frac{x-b}{t},\ &{ x \in \Big( \big( ( b , \infty ) \setminus \{ d \} \big) \cap \big( \left( r(t) , p(t) \right) \cup \left( q(t) , \min{ \{ p(t) , r(t) \} } \right) \big) \Big) },
\\
0,\ &{ 
\begin{aligned}
x \in &\bigg( \Big( ( a , b ) \setminus \{ c \} \Big) \cap \Big( \big( \tilde{l} (t) , \infty \big) \setminus \{ r(t) \} \Big) \bigg) 
\\
&{
\begin{aligned}
&\cup \Bigg( \Big( ( b , \infty ) \setminus \{ d \} \Big) \cap \bigg( \big( \max{ \{ p(t) , r(t) \} , \infty } \big) 
\\
&\cup \big( \max{ \{ p(t) , q(t) \} } , r(t) \big) \cup \big( \max{ \{ l(t) , p(t) \} } , \min{ \{ q(t) , r(t) \} } \big) \bigg) \Bigg),
\end{aligned} }
\end{aligned} }
\end{cases}
\\
\rho &= \rho_c \left( \chi_{ {}_{ {}_{ \left( -\infty , \gamma_{ {}_{ c } } (t) \right) } } } - \left( \left( x-c \right) \delta_{ {}_{ x = \gamma_{ {}_{c} } (t) } } - u_a t\ \delta_{ {}_{ x = \gamma_{ {}_{a} } (t) } } \right) \right) + \rho_d\ \delta_{ {}_{ x = \gamma_{ {}_{d} } (t) } }.
\end{aligned} 
\]

\textbf{Case 3.} $u_a > 0$, $u_b < 0$

Consider the curves

\[
\begin{aligned}
l(t) &:= a + \frac{u_a}{2} \cdot t,
\\
\tilde{l} (t) &:= a + \frac{u_a}{2} \cdot t,
\\
r(t) &:= a + u_a t,
\\
p(t) &:= b - \sqrt{- 2 u_b t},
\\
q(t) &:= b+u_a t-\sqrt{2 \left( u_a \left( b-a \right) - u_b \right) t}
\end{aligned}
\]
defined over $\left[ 0 , \infty \right)$. As in the previous cases, we can define additional curves $x = \gamma_{ { {}_{a} } } (t)$, $x = \gamma_{ { {}_{c} } } (t)$ and $x = \gamma_{ { {}_{d} } } (t)$ so that the limit $\lim_{\epsilon \rightarrow 0} \left( u^\epsilon , \rho^\epsilon \right)$ has the following explicit representation:

\begin{small}
\[
\begin{aligned}
u(x,t) &= \begin{cases}
u_a,\ &{ 
\begin{aligned}
x \in &\Big( \big( -\infty , a \big) \cap \big( -\infty , q(t) \big) \Big) 
\\
&\cup \bigg( \Big( ( a , b ) \setminus \{ c \} \Big) \cap \Big( -\infty , \min{ \{ q(t) , r(t) \} } \Big) \cap \Big( \big( -\infty , \min{ \{ \tilde{l} (t) , p(t) \} } \big) \cup \big( p(t) , \infty \big) \Big) \bigg)
\\
&\cup \bigg( \Big( ( b , \infty ) \setminus \{ d \} \Big) \cap \Big( -\infty , \min{ \{ l(t) , r(t) \} } \Big) \bigg),
\end{aligned} }
\\
\frac{x-b}{t},\ &{ 
\begin{aligned}
x \in &\Big( \big( -\infty , a \big) \cap \big( q(t) , \infty \big) \Big) 
\\
&\cup \bigg( \Big( ( a , b ) \setminus \{ c \} \Big) \cap \Big( \big( \max{ \{ p(t) , q(t) \} } , r(t) \big) \cup \big( \max{ \{ p(t) , r(t) \} } , \infty \big) \Big) \bigg),
\end{aligned} }
\\
0,\ &{ 
\begin{aligned}
x \in &
\Bigg( 
\Big( ( a , b ) \setminus \{ c \} \Big) 
\cap 
\bigg( 
\Big( r(t) , p(t) \Big) \cup \bigg( \Big( -\infty , \min{ \big\{ p(t) , r(t) \big\} } \Big) \cap \Big( \big( l(t) , q(t) \big) \cup \big( q(t) , \infty \big) \Big) \bigg)
\bigg) 
\Bigg)  
\\
&\cup \bigg( \Big( ( b , \infty ) \setminus \{ d \} \Big) \cap \Big( ( l(t) , r(t) ) \cup ( r(t) , \infty ) \Big) \bigg),
\end{aligned} }
\end{cases}
\\
\rho &= \rho_c \left( \chi_{ {}_{ {}_{ \left( -\infty , \gamma_{ {}_{ c } } (t) \right) } } } - \left( \left( x-c \right) \delta_{ {}_{ x = \gamma_{ {}_{c} } (t) } } - u_a t\ \delta_{ {}_{ x = \gamma_{ {}_{a} } (t) } } \right) \right) + \rho_d\ \delta_{ {}_{ x = \gamma_{ {}_{d} } (t) } }.
\end{aligned} 
\]
\end{small}

\textbf{Case 4.} $u_a < 0$, $u_b < 0$

In this case, introduce the curves

\[
\begin{aligned}
l(s) &:= \frac{a+b}{2} + \frac{u_b}{b-a} \cdot s,
\\
r(s) &:= a + u_a s,
\\
p(s) &:= b - \sqrt{- 2 u_b s},
\\
q(s) &:= b+u_a s-\sqrt{2 \left( u_a \left( b-a \right) - u_b \right) s}\ ({\textnormal{ defined if }} u_a \left( b-a \right) > u_b\ )
\end{aligned}
\]
over $\left[ 0 , \infty \right)$. Under this case, we have to consider two separate cases: $u_a \left( b-a \right) > u_b$ and $u_a \left( b-a \right) \leq u_b$. As done before, we can also define additional curves $x = \gamma_{ { {}_{a,1} } } (t)$, $x = \gamma_{ { {}_{a,2} } } (t)$, $x = \gamma_{ { {}_{c} } } (t)$ and $x = \gamma_{ { {}_{d} } } (t)$ in each case so that the limit $\lim_{\epsilon \rightarrow 0} \left( u^\epsilon , \rho^\epsilon \right)$ has the following explicit representation:

\begin{itemize}

	\item $u_a \left( b-a \right) > u_b$
	
\[
\begin{aligned}
u(x,t) &= \begin{cases}
u_a,\ &{ x \in ( -\infty , a ) \cap \Big( -\infty , \min{ \left\{ q(t) , r(t) \right\} } \Big) },
\\
\frac{x-a}{t},\ &{ x \in ( -\infty , a ) \cap \Big( r(t) , l(t) \Big) },
\\
\frac{x-b}{t},\ &{ 
\begin{aligned}
x \in &\bigg( ( -\infty , a ) \cap \Big( \big( \max{ \{ l(t) , r(t) \} } , \infty \big) \cup \big( q(t) , r(t) \big) \Big) \bigg) 
\\
&\cup \bigg( \Big( ( a , b ) \setminus \{ c \} \Big) \cap \big( p(t) , \infty \big) \bigg)
\end{aligned} },
\\
0,\ &{ x \in \bigg( \Big( ( a , b ) \setminus \{ c \} \Big) \cap \big( -\infty , p(t) \big) \bigg) \cup \Big( ( b , \infty ) \setminus \{ d \} \Big) },
\end{cases}
\\
\rho &= \rho_c \Bigg[ \chi_{ {}_{ \left( -\infty , \gamma_{ {}_{ a,1 } } (t) \right) } } + \left( \frac{4 \left( x-a \right)}{u_a t} - 2 \right) \chi_{ {}_{ \left( \gamma_{ {}_{ a,1 } } (t) , \gamma_{ {}_{ a,2 } } (t) \right) } } + \chi_{ {}_{ \left( \gamma_{ {}_{ a,2 } } (t) , \gamma_{ {}_{ c } } (t) \right) } }
\\
&+ \left( 2 \left( c-a \right) + u_a t + \gamma_{ {}_{ c } } (t) - x - 2 - \frac{2 \left( x-a \right)^2}{u_a t} \right) \delta_{ {}_{ x = \gamma_{ {}_{ a,1 } } (t) } } 
\\
&+ \left( 2 \left( x-a \right) + c - \gamma_{ {}_{ c } } (t) - 2 - \frac{2 \left( x-a \right)^2}{u_a t} \right) \delta_{ {}_{ x = \gamma_{ {}_{ a,2 } } (t) } } \Bigg] + \rho_d\ \delta_{ x = \gamma_{ {d} } (t) }.
\end{aligned} 
\]
	
	\item $u_a \left( b-a \right) \leq u_b$

\[
\begin{aligned}
u(x,t) &= \begin{cases}
u_a,\ &{ x \in ( -\infty , a ) \cap \Big( -\infty , r(t) \Big) },
\\
\frac{x-a}{t},\ &{ x \in ( -\infty , a ) \cap \Big( r(t) , l(t) \Big) },
\\
\frac{x-b}{t},\ &{ 
\begin{aligned}
x \in &\bigg( ( -\infty , a ) \cap \Big( \big( \max{ \{ l(t) , r(t) \} } , \infty \big) \Big) \bigg) 
\\
&\cup \bigg( \Big( ( a , b ) \setminus \{ c \} \Big) \cap \big( p(t) , \infty \big) \bigg)
\end{aligned} },
\\
0,\ &{ x \in \bigg( \Big( ( a , b ) \setminus \{ c \} \Big) \cap \big( -\infty , p(t) \big) \bigg) \cup \Big( ( b , \infty ) \setminus \{ d \} \Big) },
\end{cases}
\\
\rho &= \rho_c \Bigg[ \chi_{ {}_{ \left( -\infty , \gamma_{ {}_{ a,1 } } (t) \right) } } + \left( \frac{4 \left( x-a \right)}{u_a t} - 2 \right) \chi_{ {}_{ \left( \gamma_{ {}_{ a,1 } } (t) , \gamma_{ {}_{ a,2 } } (t) \right) } } + \chi_{ {}_{ \left( \gamma_{ {}_{ a,2 } } (t) , \gamma_{ {}_{ c } } (t) \right) } }
\\
&+ \left( 2 \left( c-a \right) + u_a t + \gamma_{ {}_{ c } } (t) - x - 2 - \frac{2 \left( x-a \right)^2}{u_a t} \right) \delta_{ {}_{ x = \gamma_{ {}_{ a,1 } } (t) } } 
\\
&+ \left( 2 \left( x-a \right) + c - \gamma_{ {}_{ c } } (t) - 2 - \frac{2 \left( x-a \right)^2}{u_a t} \right) \delta_{ {}_{ x = \gamma_{ {}_{ a,2 } } (t) } } \Bigg] + \rho_d\ \delta_{ x = \gamma_{ {d} } (t) }.
\end{aligned} 
\]

\end{itemize}

\end{theorem}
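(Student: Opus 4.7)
The plan is to follow the Hopf--Cole strategy sketched in Section~1, carried through for the piecewise initial data of this paper. First I would reduce the problem to studying the pair $(U^\epsilon, R^\epsilon)$ solving \eqref{intro-5} with initial data \eqref{intro-6}, so that $U_x^\epsilon$ and $R_x^\epsilon$ are the required approximate solutions of \eqref{intro-3}--\eqref{intro-4}. The piecewise linear/constant structure of \eqref{intro-6} is what makes everything explicit.

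Second, apply $V^\epsilon = e^{-U^\epsilon/\epsilon}$ and $S^\epsilon = R^\epsilon e^{-U^\epsilon/\epsilon}$ to linearise \eqref{intro-5} into two decoupled heat equations $V_t^\epsilon = (\epsilon/2)V_{xx}^\epsilon$ and $S_t^\epsilon = (\epsilon/2)S_{xx}^\epsilon$. The new initial data are piecewise elementary: $V^\epsilon(x,0)$ equals $e^{-u_a(x-a)/\epsilon}$ on $(-\infty,a)$, is $1$ on $(a,b)$, and equals $e^{-u_b/\epsilon}$ on $(b,\infty)$, while $S^\epsilon(x,0)$ is the analogous piecewise combination determined from $R^\epsilon(x,0)$ and $U^\epsilon(x,0)$ on the three intervals $(-\infty,c)$, $(c,d)$, $(d,\infty)$. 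Both data admit explicit Gaussian convolution formulas, and after completing the square in each exponent every piece reduces to an expression involving $\mathrm{erfc}$ with arguments linear in $(x-a)/\sqrt{2\epsilon t}$, $(x-b)/\sqrt{2\epsilon t}$, and so on.

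Third, I would recover $u^\epsilon = -\epsilon V_x^\epsilon/V^\epsilon$ and $\rho^\epsilon = (S^\epsilon/V^\epsilon)_x$ and pass to the limit. The key structural observation is that both $V^\epsilon$ and $S^\epsilon$ are finite sums of terms of the form
\[
e^{-\Phi_j(x,t)/\epsilon} \cdot (\text{algebraic factor in } x,t,\epsilon),
\]
where the phases $\Phi_j$ are dictated by the pieces of the initial data. Using the asymptotics
\[
\mathrm{erfc}(z) \sim \frac{e^{-z^2}}{z\sqrt{\pi}}\ \text{as } z \to +\infty, \qquad \mathrm{erfc}(z) \to 2\ \text{as } z \to -\infty,
\]
the leading order of $V^\epsilon$ (resp.\ $S^\epsilon$) on any region away from transition curves is controlled by the single dominant phase $\min_j \Phi_j$. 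The candidate curves $r$, $p$, $q$, $l$, $\tilde l$ defined in each case are precisely the loci where two phase functions become equal, i.e.\ where the dominant exponent switches. In regions where a single linear phase wins, $u^\epsilon$ converges to one of the constants $u_a$ or $0$; in regions where the Gaussian quadratic phase wins, $u^\epsilon$ converges to a rarefaction profile $(x-a)/t$ or $(x-b)/t$. The density $\rho^\epsilon$ inherits the ratio $S^\epsilon/V^\epsilon$, and the delta-type concentrations at the trajectories $\gamma_a, \gamma_b, \gamma_c, \gamma_d$ arise as the measure-valued $x$-derivatives of the jumps of $S^\epsilon/V^\epsilon$ across curves on which two phases coincide.

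The hard part is the case-by-case bookkeeping dictated by the signs of $u_a$ and $u_b$: for each case I must (i) write out all the phases and identify every pairwise equi-phase curve, (ii) determine in each resulting subregion which phase wins, and (iii) evaluate the leading behaviour of $S^\epsilon/V^\epsilon$ on both sides of each transition curve so as to pin down the correct delta coefficients. The most delicate subcase is Case~4 with $u_a(b-a)>u_b$ versus $u_a(b-a)\le u_b$, where the curve $q$ may fail to be real and the rarefaction emanating from $a$ may or may not interact with the shock emanating from $b$; this forces two genuinely different structures for $\rho$ and, correspondingly, two different prescriptions for the auxiliary curves $\gamma_{a,1}, \gamma_{a,2}, \gamma_c, \gamma_d$. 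On the equi-phase curves themselves one must retain the sub-exponential algebraic factors in the $\mathrm{erfc}$ asymptotics, because these are what yield the precise rarefaction slopes $(x-a)/t$ and $(x-b)/t$ and the concentrated masses riding on the $\gamma$-curves.
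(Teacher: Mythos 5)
Your proposal follows essentially the same route as the paper: the Hopf--Cole transformations $V^\epsilon = e^{-U^\epsilon/\epsilon}$, $S^\epsilon = R^\epsilon e^{-U^\epsilon/\epsilon}$, the explicit heat-kernel solution written in terms of erfc, the recovery $u^\epsilon = -\epsilon V_x^\epsilon/V^\epsilon$, $\rho^\epsilon = (S^\epsilon/V^\epsilon)_x$, a region-by-region dominant-phase analysis via erfc asymptotics with the transition curves arising as equi-phase loci, and finally the identification of the $\delta$-concentrations by pairing $R_x$ with test functions. The only cosmetic discrepancy is that you quote the asymptotics for the standard normalisation of erfc, whereas the paper defines $\mathrm{erfc}(z)=\int_z^\infty e^{-s^2}\,ds$ (so $\mathrm{erfc}(z)\sim e^{-z^2}/(2z)$ and $\mathrm{erfc}(-\infty)=\sqrt{\pi}$); this does not affect any of the limits.
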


\begin{proof}

The first step is to consider the generalized Hopf-Cole transformations

\begin{equation}
V^\epsilon := e^{- \frac{U^\epsilon}{\epsilon}},\ S^\epsilon := R^\epsilon\ e^{- \frac{U^\epsilon}{\epsilon}},
\label{intro-7}
\end{equation}
leading us to the consideration of the linear problem

\begin{equation}
V_t^\epsilon = \frac{\epsilon}{2} V_{xx}^\epsilon,\ S_t^\epsilon = \frac{\epsilon}{2} S_{xx}^\epsilon
\label{intro-8}
\end{equation}
under the initial conditions

\begin{align}
V^\epsilon (x,0) &= \begin{cases}
e^{- \frac{u_a (x-a)}{\epsilon}},\ &x<a,
\\
1,\ &a<x<b,
\\
e^{- \frac{u_b}{\epsilon}},\ &x>b,
\end{cases}
\label{intro-9}
\\
S^\epsilon (x,0) &= \begin{cases}
\rho_c (x-c)\ e^{- \frac{u_a (x-a)}{\epsilon}},\ &x<a,
\\
\rho_c (x-c),\ &a<x<c,
\\
0,\ &c<x<d,
\\
\rho_d\ e^{- \frac{u_b}{\epsilon}},\ &x>d.
\end{cases}
\label{intro-10}
\end{align}
The system \eqref{intro-8} under the initial conditions \eqref{intro-9}-\eqref{intro-10} can be solved explicitly by

\[
\begin{aligned}
\begin{pmatrix}
V^\epsilon (x,t)
\\
\\
S^\epsilon (x,t)
\end{pmatrix} = \begin{pmatrix}
\frac{1}{\sqrt{\pi}} 
\Bigg[ {\textnormal{erfc}} \left( \frac{x - a - u_a t}{\sqrt{2t \epsilon}} \right) \exp \left( \frac{\left( x - a - u_a t \right)^2 - (x-a)^2}{2t \epsilon} \right)
\\
+ \left( {\textnormal{erfc}} \left( \frac{x-b}{\sqrt{2t \epsilon}} \right) - {\textnormal{erfc}} \left( \frac{x-a}{\sqrt{2t \epsilon}} \right) \right) + {\textnormal{erfc}} \left( - \frac{x-b}{\sqrt{2t \epsilon}} \right) e^{- \frac{u_b}{\epsilon}} \Bigg]
\\
----------------------------------
\\
\frac{1}{\sqrt{2 \pi t \epsilon}} \left[ t \epsilon\ \rho_c\ e^{- \frac{(x-a)^2}{2 t \epsilon}} + \sqrt{2 t \epsilon}\ \rho_c ( x - c - u_a t )\ e^{\frac{( x - a - u_a t )^2 - (x-a)^2}{2 t \epsilon}} \textnormal{erfc} \left( \frac{x - a - u_a t}{\sqrt{2 t \epsilon}} \right) \right.
\\
+ t \epsilon\ \rho_c \left( e^{- \frac{(x-a)^2}{2 t \epsilon}} - e^{- \frac{(x-c)^2}{2 t \epsilon}} \right) + \sqrt{2 t \epsilon}\ \rho_c (x-c) \left( \textnormal{erfc} \left( \frac{x-c}{\sqrt{2 t \epsilon}} \right) - \textnormal{erfc} \left( \frac{x-a}{\sqrt{2 t \epsilon}} \right) \right)
\\
+ \left. \sqrt{2 t \epsilon}\ \rho_d\ e^{- \frac{u_b}{\epsilon}}\ \textnormal{erfc} \left( -\frac{x-d}{\sqrt{2 t \epsilon}} \right) \right]
\end{pmatrix}
\end{aligned}
\]
where $\textnormal{erfc} : z \longmapsto \int_{z}^{\infty}\ e^{-t^2}\ dt$ for every $z \in \textbf{R}^1$. Coming back to the original problem \eqref{intro-3}-\eqref{intro-4}, we  explicitly recover $u^\epsilon$ and $R^\epsilon$ as follows:

\[
\begin{aligned}
u^\epsilon &= -\epsilon \cdot \frac{V_x^\epsilon}{V^\epsilon} 
\\
&= \frac{\epsilon}{\sqrt{2t \epsilon}} \cdot \frac{\begin{aligned}
u_a \cdot \frac{\sqrt{2t \epsilon}}{\epsilon}\ {\textnormal{erfc}} \left( \frac{ x - a - u_a t }{\sqrt{2 t \epsilon}} \right) e^{\frac{( x - a - u_a t )^2 - (x-a)^2}{2 t \epsilon}} 
\\
+ e^{- \frac{(x-b)^2}{2t \epsilon}} \left( 1 - e^{- \frac{u_b}{\epsilon}} \right)
\end{aligned}}{
\begin{aligned}
{\textnormal{erfc}} \left( \frac{x - a - u_a t}{\sqrt{2t \epsilon}} \right) \exp \left( {\frac{\left( x - a - u_a t \right)^2 - (x-a)^2} {2 t \epsilon}} \right)
\\
+ \left( {\textnormal{erfc}} \left( \frac{x-b}{\sqrt{2t \epsilon}} \right) - {\textnormal{erfc}} \left( \frac{x-a}{\sqrt{2t \epsilon}} \right) \right) + {\textnormal{erfc}} \left( - \frac{x-b}{\sqrt{2t \epsilon}} \right) e^{- \frac{u_b}{\epsilon}}
\end{aligned} },
\\
\\
R^\epsilon &= \frac{S^\epsilon}{V^\epsilon} 
\\
&= \frac{
\begin{aligned}
\frac{t \epsilon}{\sqrt{2 t \epsilon}}\ \rho_c\ e^{- \frac{(x-a)^2}{2 t \epsilon}} + \rho_c ( x - c - u_a t )\ e^{\frac{( x - a - u_a t )^2 - (x-a)^2}{2 t \epsilon}} \textnormal{erfc} \left( \frac{x - a - u_a t}{\sqrt{2 t \epsilon}} \right)
\\
+ \frac{t \epsilon}{\sqrt{2 t \epsilon}}\ \rho_c \left( e^{- \frac{(x-a)^2}{2 t \epsilon}} - e^{- \frac{(x-c)^2}{2 t \epsilon}} \right) + \rho_c (x-c) \left( \textnormal{erfc} \left( \frac{x-c}{\sqrt{2 t \epsilon}} \right) - \textnormal{erfc} \left( \frac{x-a}{\sqrt{2 t \epsilon}} \right) \right)
\\
+ \rho_d\ e^{- \frac{u_b}{\epsilon}}\ \textnormal{erfc} \left( -\frac{x-d}{\sqrt{2 t \epsilon}} \right)\
\end{aligned}
}{
\begin{aligned}
{\textnormal{erfc}} \left( \frac{x - a - u_a t}{\sqrt{2t \epsilon}} \right) \exp \left( {\frac{\left( x - a - u_a t \right)^2 - (x-a)^2} {2 t \epsilon}} \right)
\\
+ \left( {\textnormal{erfc}} \left( \frac{x-b}{\sqrt{2t \epsilon}} \right) - {\textnormal{erfc}} \left( \frac{x-a}{\sqrt{2t \epsilon}} \right) \right) + {\textnormal{erfc}} \left( - \frac{x-b}{\sqrt{2t \epsilon}} \right) e^{- \frac{u_b}{\epsilon}}
\end{aligned} }.
\end{aligned}
\]
Throughout this article, we assume that $u_a,u_b \neq 0$. For each $\epsilon > 0$ and $\left( x,t \right) \in \textbf{R}^1 \times \left( 0 , \infty \right)$, define
\begin{itemize}

	\item $A_\epsilon = A_\epsilon (x,t) := \frac{|x-a|}{\sqrt{2t \epsilon}}$
	
	\item $B_\epsilon = B_\epsilon (x,t) := \frac{|x-b|}{\sqrt{2t \epsilon}}$
	
	\item $C_\epsilon = C_\epsilon (x,t) := \frac{|x-c|}{\sqrt{2t \epsilon}}$
	
	\item $D_\epsilon = D_\epsilon (x,t) := \frac{|x-d|}{\sqrt{2t \epsilon}}$
	
	\item $P_\epsilon = P_\epsilon (x,t) := \frac{| x - a - u_a t |}{\sqrt{2 t \epsilon}}$

\end{itemize}
Then
\begin{itemize}

	\item $A_\epsilon (x,t) \xrightarrow{\epsilon \rightarrow 0} \infty$ whenever $x \neq a$
	
	\item $B_\epsilon (x,t) \xrightarrow{\epsilon \rightarrow 0} \infty$ whenever $x \neq b$
	
	\item $C_\epsilon (x,t) \xrightarrow{\epsilon \rightarrow 0} \infty$ whenever $x \neq c$
	
	\item $D_\epsilon (x,t) \xrightarrow{\epsilon \rightarrow 0} \infty$ whenever $x \neq d$
	
	\item $P_\epsilon (x,t) \xrightarrow{\epsilon \rightarrow 0} \infty$ whenever $x \neq a + u_a t$

\end{itemize}

Our next objective is to obtain the corresponding explicit expressions of $u^\epsilon = u^\epsilon (x,t)$ and $R^\epsilon = R^\epsilon (x,t)$ in the regions $x < a$, $a < x < c$, $c < x < b$, $b < x < d$ and $x > d$ in terms of $A_\epsilon$, $B_\epsilon$, $C_\epsilon$, $D_\epsilon$ and $P_\epsilon$. The details of the relevant computations involved here have been provided in the appendix.

\begin{itemize}
	\item $x < a$
\[
\begin{aligned}
u^\epsilon &= \begin{cases}
\frac{\begin{aligned}
u_a\ {\textnormal{erfc}} \left( P_\epsilon \right) e^{P_\epsilon^2 - A_\epsilon^2} + \frac{\epsilon}{\sqrt{2t \epsilon}} \left( 1 - e^{- \frac{u_b}{\epsilon}} \right) e^{- B_\epsilon^2}
\end{aligned}}{
\begin{aligned}
{\textnormal{erfc}} (A_\epsilon) + {\textnormal{erfc}} (B_\epsilon) \left( e^{- \frac{u_b}{\epsilon}} - 1 \right) + {\textnormal{erfc}} \left( P_\epsilon \right) e^{P_\epsilon^2 - A_\epsilon^2}
\end{aligned} },\ &{ x > a + u_a t },
\\
\\
\frac{\begin{aligned}
u_a \left( \sqrt{\pi} - {\textnormal{erfc}} \left( P_\epsilon \right) \right) e^{P_\epsilon^2 - A_\epsilon^2} + \frac{\epsilon}{\sqrt{2t \epsilon}} \left( 1 - e^{- \frac{u_b}{\epsilon}} \right) e^{- B_\epsilon^2}
\end{aligned}}{
\begin{aligned}
{\textnormal{erfc}} (A_\epsilon) + {\textnormal{erfc}} (B_\epsilon) \left( e^{- \frac{u_b}{\epsilon}} - 1 \right) + \left( \sqrt{\pi} - {\textnormal{erfc}} \left( P_\epsilon \right) \right) e^{P_\epsilon^2 - A_\epsilon^2}
\end{aligned} },\ &{ x < a + u_a t },
\end{cases} 
\\
\\
R^\epsilon
&= \begin{cases}
\frac{
\begin{aligned}
\rho_c \bigg[ \sqrt{2 t \epsilon} \left( \frac{1}{e^{A_\epsilon^2}} - \frac{1}{2\ e^{C_\epsilon^2}} \right) + \left( x - c - u_a t \right) \textnormal{erfc} \left( P_\epsilon \right) e^{P_\epsilon^2 - A_\epsilon^2}
\\
+ \left( x-c \right) \left( \textnormal{erfc} \left( A_\epsilon \right) - \textnormal{erfc} \left( C_\epsilon \right) \right) \bigg] + \rho_d\ \textnormal{erfc} \left( D_\epsilon \right) e^{- \frac{u_b}{\epsilon}}
\end{aligned}
}{
\begin{aligned}
{\textnormal{erfc}} \left( A_\epsilon \right) + {\textnormal{erfc}} \left( B_\epsilon \right) \left( e^{- \frac{u_b}{\epsilon}} - 1 \right) + {\textnormal{erfc}} \left( P_\epsilon \right) e^{ P_\epsilon^2 - A_\epsilon^2 }
\end{aligned} },\ &{ x > a + u_a t },
\\
\\
\frac{
\begin{aligned}
\rho_c \bigg[ \sqrt{2 t \epsilon} \left( \frac{1}{e^{A_\epsilon^2}} - \frac{1}{2\ e^{C_\epsilon^2}} \right) + \left( x - c - u_a t \right) \left( \sqrt{\pi} - \textnormal{erfc} \left( P_\epsilon \right) \right) e^{P_\epsilon^2 - A_\epsilon^2}
\\
+ \left( x-c \right) \left( \textnormal{erfc} \left( A_\epsilon \right) - \textnormal{erfc} \left( C_\epsilon \right) \right) \bigg] + \rho_d\ \textnormal{erfc} \left( D_\epsilon \right) e^{- \frac{u_b}{\epsilon}}
\end{aligned}
}{
\begin{aligned}
{\textnormal{erfc}} \left( A_\epsilon \right) + {\textnormal{erfc}} \left( B_\epsilon \right) \left( e^{- \frac{u_b}{\epsilon}} - 1 \right) + \left( \sqrt{\pi} - \textnormal{erfc} \left( P_\epsilon \right) \right) e^{ P_\epsilon^2 - A_\epsilon^2 }
\end{aligned} },\ &{ x < a + u_a t }.
\end{cases} 
\end{aligned}
\]
	\item $a < x < c$

\[
\begin{aligned}
u^\epsilon &= \begin{cases}
\frac{\begin{aligned}
u_a\ {\textnormal{erfc}} \left( P_\epsilon \right) e^{P_\epsilon^2 - A_\epsilon^2} + \frac{\epsilon}{\sqrt{2t \epsilon}} \left( 1 - e^{- \frac{u_b}{\epsilon}} \right) e^{- B_\epsilon^2}
\end{aligned}}{
\begin{aligned}
\sqrt{\pi} - {\textnormal{erfc}}(A_\epsilon) + {\textnormal{erfc}}(B_\epsilon) \left( e^{- \frac{u_b}{\epsilon}} - 1 \right) + {\textnormal{erfc}} \left( P_\epsilon \right) e^{P_\epsilon^2 - A_\epsilon^2} 
\end{aligned} },\ &{ x > a + u_a t },
\\
\\
\frac{\begin{aligned}
u_a \left( \sqrt{\pi} - {\textnormal{erfc}} \left( P_\epsilon \right) \right) e^{P_\epsilon^2 - A_\epsilon^2} + \frac{\epsilon}{\sqrt{2t \epsilon}} \left( 1 - e^{- \frac{u_b}{\epsilon}} \right) e^{- B_\epsilon^2}
\end{aligned}}{
\begin{aligned}
\sqrt{\pi} - {\textnormal{erfc}}(A_\epsilon) + {\textnormal{erfc}}(B_\epsilon) \left( e^{- \frac{u_b}{\epsilon}} - 1 \right) + \left( \sqrt{\pi} - {\textnormal{erfc}} \left( P_\epsilon \right) \right) e^{P_\epsilon^2 - A_\epsilon^2} 
\end{aligned} },\ &{ x < a + u_a t },
\end{cases} 
\\
\\
R^\epsilon 
&= \begin{cases}
\frac{
\begin{aligned}
\rho_c \bigg[ \sqrt{2 t \epsilon} \left( \frac{1}{e^{A_\epsilon^2}} - \frac{1}{2\ e^{C_\epsilon^2}} \right) + \left( x - c - u_a t \right) \textnormal{erfc} \left( P_\epsilon \right) e^{P_\epsilon^2 - A_\epsilon^2}
\\
+ \left( x-c \right) \left( \sqrt{\pi} - \textnormal{erfc} \left( A_\epsilon \right) - \textnormal{erfc} \left( C_\epsilon \right) \right) \bigg] + \rho_d\ \textnormal{erfc} \left( D_\epsilon \right) e^{- \frac{u_b}{\epsilon}}
\end{aligned}
}{
\begin{aligned}
\sqrt{\pi} - {\textnormal{erfc}}(A_\epsilon) + {\textnormal{erfc}}(B_\epsilon) \left( e^{- \frac{u_b}{\epsilon}} - 1 \right) + {\textnormal{erfc}} \left( P_\epsilon \right) e^{P_\epsilon^2 - A_\epsilon^2} 
\end{aligned} },\ &{ x > a + u_a t },
\\
\\
\frac{
\begin{aligned}
\rho_c \bigg[ \sqrt{2 t \epsilon} \left( \frac{1}{e^{A_\epsilon^2}} - \frac{1}{2\ e^{C_\epsilon^2}} \right) + \left( x - c - u_a t \right) \left( \sqrt{\pi} - \textnormal{erfc} \left( P_\epsilon \right) \right) e^{P_\epsilon^2 - A_\epsilon^2}
\\
+ \left( x-c \right) \left( \sqrt{\pi} - \textnormal{erfc} \left( A_\epsilon \right) - \textnormal{erfc} \left( C_\epsilon \right) \right) \bigg] + \rho_d\ \textnormal{erfc} \left( D_\epsilon \right) e^{- \frac{u_b}{\epsilon}}
\end{aligned}
}{
\begin{aligned}
\sqrt{\pi} - {\textnormal{erfc}}(A_\epsilon) + {\textnormal{erfc}}(B_\epsilon) \left( e^{- \frac{u_b}{\epsilon}} - 1 \right) + \left( \sqrt{\pi} - {\textnormal{erfc}} \left( P_\epsilon \right) \right) e^{P_\epsilon^2 - A_\epsilon^2} 
\end{aligned} },\ &{ x < a + u_a t }.
\end{cases} 
\end{aligned}
\]
	\item $c < x < b$

\[
\begin{aligned}
u^\epsilon &= \begin{cases}
\frac{\begin{aligned}
u_a\ {\textnormal{erfc}} \left( P_\epsilon \right) e^{P_\epsilon^2 - A_\epsilon^2} + \frac{\epsilon}{\sqrt{2t \epsilon}} \left( 1 - e^{- \frac{u_b}{\epsilon}} \right) e^{- B_\epsilon^2}
\end{aligned}}{
\begin{aligned}
\sqrt{\pi} - {\textnormal{erfc}}(A_\epsilon) + {\textnormal{erfc}}(B_\epsilon) \left( e^{- \frac{u_b}{\epsilon}} - 1 \right) + {\textnormal{erfc}} \left( P_\epsilon \right) e^{P_\epsilon^2 - A_\epsilon^2}
\end{aligned} },\ &{ x > a + u_a t },
\\
\\
\frac{\begin{aligned}
u_a \left( \sqrt{\pi} - {\textnormal{erfc}} \left( P_\epsilon \right) \right) e^{P_\epsilon^2 - A_\epsilon^2} + \frac{\epsilon}{\sqrt{2t \epsilon}} \left( 1 - e^{- \frac{u_b}{\epsilon}} \right) e^{- B_\epsilon^2}
\end{aligned}}{
\begin{aligned}
\sqrt{\pi} - {\textnormal{erfc}}(A_\epsilon) + {\textnormal{erfc}}(B_\epsilon) \left( e^{- \frac{u_b}{\epsilon}} - 1 \right) + \left( \sqrt{\pi} - {\textnormal{erfc}} \left( P_\epsilon \right) \right) e^{P_\epsilon^2 - A_\epsilon^2} 
\end{aligned} },\ &{ x < a + u_a t },
\end{cases}
\\
\\
R^\epsilon
&= \begin{cases}
\frac{
\begin{aligned}
\rho_c \bigg[ \sqrt{2 t \epsilon} \left( \frac{1}{e^{A_\epsilon^2}} - \frac{1}{2\ e^{C_\epsilon^2}} \right) + \left( x - c - u_a t \right) \textnormal{erfc} \left( P_\epsilon \right) e^{P_\epsilon^2 - A_\epsilon^2}
\\
+ (x-c) \left( \textnormal{erfc} \left( C_\epsilon \right) - \textnormal{erfc} \left( A_\epsilon \right) \right) \bigg] + \rho_d\ \textnormal{erfc} \left( D_\epsilon \right) e^{- \frac{u_b}{\epsilon}}
\end{aligned}
}{
\begin{aligned}
\sqrt{\pi} - {\textnormal{erfc}}(A_\epsilon) + {\textnormal{erfc}}(B_\epsilon) \left( e^{- \frac{u_b}{\epsilon}} - 1 \right) + {\textnormal{erfc}} \left( P_\epsilon \right) e^{P_\epsilon^2 - A_\epsilon^2}
\end{aligned} },\ &{ x > a + u_a t },
\\
\\
\frac{
\begin{aligned}
\rho_c \bigg[ \sqrt{2 t \epsilon} \left( \frac{1}{e^{A_\epsilon^2}} - \frac{1}{2\ e^{C_\epsilon^2}} \right) + \left( x - c - u_a t \right) \left( \sqrt{\pi} - \textnormal{erfc} \left( P_\epsilon \right) \right) e^{P_\epsilon^2 - A_\epsilon^2}
\\
+ (x-c) \left( \textnormal{erfc} \left( C_\epsilon \right) - \textnormal{erfc} \left( A_\epsilon \right) \right) \bigg] + \rho_d\ \textnormal{erfc} \left( D_\epsilon \right) e^{- \frac{u_b}{\epsilon}}
\end{aligned}
}{
\begin{aligned}
\sqrt{\pi} - {\textnormal{erfc}}(A_\epsilon) + {\textnormal{erfc}}(B_\epsilon) \left( e^{- \frac{u_b}{\epsilon}} - 1 \right) + \left( \sqrt{\pi} - {\textnormal{erfc}} \left( P_\epsilon \right) \right) e^{P_\epsilon^2 - A_\epsilon^2}
\end{aligned} },\ &{ x < a + u_a t }.
\end{cases} 
\end{aligned}
\]
\item $b < x < d$

\[
\begin{aligned}
u^\epsilon 
&= \begin{cases}
\frac{\begin{aligned}
u_a\ {\textnormal{erfc}} \left( P_\epsilon \right) e^{P_\epsilon^2 - A_\epsilon^2} + \frac{\epsilon}{\sqrt{2t \epsilon}} \left( 1 - e^{- \frac{u_b}{\epsilon}} \right) e^{- B_\epsilon^2}
\end{aligned}}{
\begin{aligned}
\sqrt{\pi}\ e^{- \frac{u_b}{\epsilon}} - {\textnormal{erfc}}({A_\epsilon}) + {\textnormal{erfc}} (B_\epsilon) \left( 1 - e^{- \frac{u_b}{\epsilon}} \right) + {\textnormal{erfc}} \left( P_\epsilon \right) e^{P_\epsilon^2 - A_\epsilon^2} 
\end{aligned} },\ &{ x > a + u_a t },
\\
\\
\frac{\begin{aligned}
u_a\ \left( \sqrt{\pi} - {\textnormal{erfc}} \left( P_\epsilon \right) \right) e^{P_\epsilon^2 - A_\epsilon^2} + \frac{\epsilon}{\sqrt{2t \epsilon}} \left( 1 - e^{- \frac{u_b}{\epsilon}} \right) e^{- B_\epsilon^2}
\end{aligned}}{
\begin{aligned}
\sqrt{\pi}\ e^{- \frac{u_b}{\epsilon}} - {\textnormal{erfc}}({A_\epsilon}) + {\textnormal{erfc}} (B_\epsilon) \left( 1 - e^{- \frac{u_b}{\epsilon}} \right) + \left( \sqrt{\pi} - {\textnormal{erfc}} \left( P_\epsilon \right) \right) e^{P_\epsilon^2 - A_\epsilon^2} 
\end{aligned} },\ &{ x < a + u_a t },
\end{cases} 
\\
\\
R^\epsilon
&= \begin{cases}
\frac{
\begin{aligned}
\rho_c \bigg[ \sqrt{2 t \epsilon} \left( \frac{1}{e^{A_\epsilon^2}} - \frac{1}{2\ e^{C_\epsilon^2}} \right) + \left( x - c - u_a t \right) \textnormal{erfc} \left( P_\epsilon \right) e^{P_\epsilon^2 - A_\epsilon^2}
\\
+ (x-c) \left( \textnormal{erfc} \left( C_\epsilon \right) - \textnormal{erfc} \left( A_\epsilon \right) \right) \bigg] + \rho_d\ \textnormal{erfc} \left( D_\epsilon \right) e^{- \frac{u_b}{\epsilon}}
\end{aligned}
}{
\begin{aligned}
\sqrt{\pi}\ e^{- \frac{u_b}{\epsilon}} - {\textnormal{erfc}}({A_\epsilon}) + {\textnormal{erfc}} (B_\epsilon) \left( 1 - e^{- \frac{u_b}{\epsilon}} \right) + {\textnormal{erfc}} \left( P_\epsilon \right) e^{P_\epsilon^2 - A_\epsilon^2} 
\end{aligned} } ,\ &{ x > a + u_a t },
\\
\\
\frac{
\begin{aligned}
\rho_c \bigg[ \sqrt{2 t \epsilon} \left( \frac{1}{e^{A_\epsilon^2}} - \frac{1}{2\ e^{C_\epsilon^2}} \right) + \left( x - c - u_a t \right) \left( \sqrt{\pi} - \textnormal{erfc} \left( P_\epsilon \right) \right) e^{P_\epsilon^2 - A_\epsilon^2}
\\
+ (x-c) \left( \textnormal{erfc} \left( C_\epsilon \right) - \textnormal{erfc} \left( A_\epsilon \right) \right) \bigg] + \rho_d\ \textnormal{erfc} \left( D_\epsilon \right) e^{- \frac{u_b}{\epsilon}}
\end{aligned}
}{
\begin{aligned}
\sqrt{\pi}\ e^{- \frac{u_b}{\epsilon}} - {\textnormal{erfc}}({A_\epsilon}) + {\textnormal{erfc}} (B_\epsilon) \left( 1 - e^{- \frac{u_b}{\epsilon}} \right) + \left( \sqrt{\pi} - {\textnormal{erfc}} \left( P_\epsilon \right) \right) e^{P_\epsilon^2 - A_\epsilon^2} 
\end{aligned} },\ &{ x < a + u_a t }.
\end{cases} 
\end{aligned}
\]
\item $x > d$

\[
\begin{aligned}
u^\epsilon 
&= \begin{cases}
\frac{\begin{aligned}
u_a\ {\textnormal{erfc}} \left( P_\epsilon \right) e^{P_\epsilon^2 - A_\epsilon^2} + \frac{\epsilon}{\sqrt{2t \epsilon}} \left( 1 - e^{- \frac{u_b}{\epsilon}} \right) e^{- B_\epsilon^2}
\end{aligned}}{
\begin{aligned}
\sqrt{\pi}\ e^{- \frac{u_b}{\epsilon}} - {\textnormal{erfc}}({A_\epsilon}) + {\textnormal{erfc}} (B_\epsilon) \left( 1 - e^{- \frac{u_b}{\epsilon}} \right) + {\textnormal{erfc}} \left( P_\epsilon \right) e^{P_\epsilon^2 - A_\epsilon^2} 
\end{aligned} },\ &{ x > a + u_a t },
\\
\\
\frac{\begin{aligned}
u_a\ \left( \sqrt{\pi} - {\textnormal{erfc}} \left( P_\epsilon \right) \right) e^{P_\epsilon^2 - A_\epsilon^2} + \frac{\epsilon}{\sqrt{2t \epsilon}} \left( 1 - e^{- \frac{u_b}{\epsilon}} \right) e^{- B_\epsilon^2}
\end{aligned}}{
\begin{aligned}
\sqrt{\pi}\ e^{- \frac{u_b}{\epsilon}} - {\textnormal{erfc}}({A_\epsilon}) + {\textnormal{erfc}} (B_\epsilon) \left( 1 - e^{- \frac{u_b}{\epsilon}} \right) + \left( \sqrt{\pi} - {\textnormal{erfc}} \left( P_\epsilon \right) \right) e^{P_\epsilon^2 - A_\epsilon^2} 
\end{aligned} },\ &{ x < a + u_a t },
\end{cases} 
\\
\\
R^\epsilon 
&= \begin{cases}
\frac{
\begin{aligned}
\rho_c \bigg[ \sqrt{2 t \epsilon} \left( \frac{1}{e^{A_\epsilon^2}} - \frac{1}{2\ e^{C_\epsilon^2}} \right) + \left( x - c - u_a t \right) \textnormal{erfc} \left( P_\epsilon \right) e^{P_\epsilon^2 - A_\epsilon^2}
\\
+ (x-c) \left( \textnormal{erfc} \left( C_\epsilon \right) - \textnormal{erfc} \left( A_\epsilon \right) \right) \bigg] + \rho_d \left( \sqrt{\pi} - \textnormal{erfc} \left( D_\epsilon \right) \right) e^{- \frac{u_b}{\epsilon}}
\end{aligned}
}{
\begin{aligned}
\sqrt{\pi}\ e^{- \frac{u_b}{\epsilon}} - {\textnormal{erfc}}({A_\epsilon}) + {\textnormal{erfc}} (B_\epsilon) \left( 1 - e^{- \frac{u_b}{\epsilon}} \right) + {\textnormal{erfc}} \left( P_\epsilon \right) e^{P_\epsilon^2 - A_\epsilon^2} 
\end{aligned} } ,\ &{ x > a + u_a t },
\\
\\
\frac{
\begin{aligned}
\rho_c \bigg[ \sqrt{2 t \epsilon} \left( \frac{1}{e^{A_\epsilon^2}} - \frac{1}{2\ e^{C_\epsilon^2}} \right) + \left( x - c - u_a t \right) \left( \sqrt{\pi} - \textnormal{erfc} \left( P_\epsilon \right) \right) e^{P_\epsilon^2 - A_\epsilon^2}
\\
+ (x-c) \left( \textnormal{erfc} \left( C_\epsilon \right) - \textnormal{erfc} \left( A_\epsilon \right) \right) \bigg] + \rho_d \left( \sqrt{\pi} - \textnormal{erfc} \left( D_\epsilon \right) \right) e^{- \frac{u_b}{\epsilon}}
\end{aligned}
}{
\begin{aligned}
\sqrt{\pi}\ e^{- \frac{u_b}{\epsilon}} - {\textnormal{erfc}}({A_\epsilon}) + {\textnormal{erfc}} (B_\epsilon) \left( 1 - e^{- \frac{u_b}{\epsilon}} \right) + \left( \sqrt{\pi} - {\textnormal{erfc}} \left( P_\epsilon \right) \right) e^{P_\epsilon^2 - A_\epsilon^2} 
\end{aligned} },\ &{ x < a + u_a t }.
\end{cases} 
\end{aligned}
\]
\end{itemize}
Depending on the relative positions of $u_a$ and $u_b$, we study the asymptotic behavior of $\left( u^\epsilon , R^\epsilon \right)$ as $\epsilon \rightarrow 0$ in each of the regions mentioned above. For discussing the passage to the limit, we will extensively use the following asymptotic properties of the function erfc:

\begin{enumerate}
	
	\item $\lim_{z \rightarrow \infty} \textnormal{erfc} \left( z \right) = 0$
	
	\item $\textnormal{erfc} \left( z \right) = \left( \frac{1}{2 z} - \frac{1}{4 z^3} + o \left( \frac{1}{z^3} \right) \right)\ e^{- z^2} \textnormal{ as } z \rightarrow \infty$
	
	\item $\lim_{z \rightarrow \infty}\ f(z) = \frac{1}{2}$, where $f : z \longmapsto z\ \textnormal{erfc} \left( z \right) e^{z^2}$ for every $z \in \textbf{R}^1$

\end{enumerate}
These properties have been proved in \cite{das-1-submitted}. However, for the sake of completeness, the derivations have again been provided in the appendix.
	
\textbf{Case 1.} $u_a < 0$, $u_b > 0$
	
\begin{enumerate}

	\item $x < a$
	
Within this region, the limit ${\displaystyle{\lim_{\epsilon \rightarrow 0}\ \left( u^\epsilon , R^\epsilon \right)}}$ has to be separately evaluated in the subregions $x > a + u_a t$ and $x < a + u_a t$ as follows:
	
\subsection*{Subregion 1. $x > a + u_a t$}

\[
\begin{aligned}
&\lim_{\epsilon \rightarrow 0}
\begin{pmatrix}
\frac{\begin{aligned}
\frac{\left( b-x \right) f(P_\epsilon)}{x-a-u_a t} \cdot u_a + \frac{(b-x) \left( 1 - e^{- \frac{| u_b |}{\epsilon}} \right)}{2t\ e^{B_\epsilon^2 - A_\epsilon^2}}
\end{aligned}}{
\begin{aligned}
\frac{\left( b-x \right) f(A_\epsilon)}{a-x} + \frac{\left( e^{- \frac{| u_b |}{\epsilon}} - 1 \right) f(B_\epsilon)}{e^{B_\epsilon^2 - A_\epsilon^2}} + \frac{\left( b-x \right) f(P_\epsilon)}{x-a-u_a t}
\end{aligned} }
\\
--------------------------
\\
\frac{
\begin{aligned}
\frac{\rho_c \left( b-x \right)}{2} + \rho_c \left( x-c-u_a t \right) \frac{\left( b-x \right) f(P_\epsilon)}{x-a-u_a t}
\\
\\
+ \rho_c \left( x-c \right) \left( \frac{\left( b-x \right) f(A_\epsilon)}{\left( a-x \right)} - \frac{\left( b-x \right) f(C_\epsilon)}{\left( c-x \right) e^{C_\epsilon^2 - A_\epsilon^2}} \right)
\\
\\
+\ \rho_c\ \frac{\left( b-x \right)}{2} \left( 1 - e^{A_\epsilon^2 - C_\epsilon^2} \right) + \rho_d\ \frac{\left( b-x \right) f(D_\epsilon)}{\left( d-x \right) e^{D_\epsilon^2 - A_\epsilon^2}} \cdot e^{- \frac{| u_b |}{\epsilon}}
\end{aligned}
}{
\begin{aligned}
\frac{\left( b-x \right) f(A_\epsilon)}{a-x} + \frac{\left( e^{- \frac{| u_b |}{\epsilon}} - 1 \right) f(B_\epsilon)}{e^{B_\epsilon^2 - A_\epsilon^2}} + \frac{\left( b-x \right) f(P_\epsilon)}{x-a-u_a t}
\end{aligned} }
\end{pmatrix}^T
\\
= &\left( \frac{x-a}{t} , \rho_c\ \frac{2 \left( x-a-u_a t \right) \left( x-a \right) + \left( a-c \right) u_a t}{u_a t} \right).
\end{aligned}
\]

\subsection*{Subregion 2. $x < a + u_a t$}	
	
\[
\begin{aligned}
&\lim_{\epsilon \rightarrow 0}
\begin{pmatrix}
\frac{\begin{aligned}
u_a \left( \sqrt{\pi} - {\textnormal{erfc}} \left( P_\epsilon \right) \right) + \frac{b-x}{2t} \cdot \frac{1 - e^{- \frac{| u_b |}{\epsilon}}}{B_\epsilon\ e^{B_\epsilon^2 + P_\epsilon^2 - A_\epsilon^2}}
\end{aligned}}{
\begin{aligned}
\frac{\left( b-x \right) f(A_\epsilon)}{\left( a-x \right) B_\epsilon\ e^{P_\epsilon^2}} + \frac{\left( e^{- \frac{| u_b |}{\epsilon}} - 1 \right) f(B_\epsilon)}{B_\epsilon\ e^{B_\epsilon^2 + P_\epsilon^2 - A_\epsilon^2}} + \left( \sqrt{\pi} - {\textnormal{erfc}} \left( P_\epsilon \right) \right)
\end{aligned} }
\\
--------------------------
\\
\frac{\begin{aligned}
\frac{\rho_c \left( b-x \right)}{2\ B_\epsilon\ e^{P_\epsilon^2}} + \rho_c \left( x-c-u_a t \right) \left( \sqrt{\pi} - {\textnormal{erfc}} \left( P_\epsilon \right) \right) 
\\
\\
+\ \rho_c \frac{\left( b-x \right)}{2} \left( \frac{1}{B_\epsilon\ e^{P_\epsilon^2}} - \frac{1}{B_\epsilon\ e^{C_\epsilon^2 + P_\epsilon^2 - A_\epsilon^2}} \right) 
\\
\\
+ \rho_c \left( x-c \right) \left( \frac{\left( b-x \right) f(A_\epsilon)}{\left( a-x \right) B_\epsilon\ e^{P_\epsilon^2}} - \frac{\left( b-x \right) f(C_\epsilon)}{\left( c-x \right) B_\epsilon\ e^{C_\epsilon^2 + P_\epsilon^2 - A_\epsilon^2}} \right)
\\
\\
+ \rho_d \frac{\left( b-x \right) f(D_\epsilon)}{\left( d-x \right) B_\epsilon\ e^{D_\epsilon^2 + P_\epsilon^2 - A_\epsilon^2}} \cdot e^{- \frac{| u_b |}{\epsilon}}
\end{aligned}}{\begin{aligned}
\frac{\left( b-x \right) f(A_\epsilon)}{\left( a-x \right) B_\epsilon\ e^{P_\epsilon^2}} + \frac{\left( e^{- \frac{| u_b |}{\epsilon}} - 1 \right) f(B_\epsilon)}{B_\epsilon\ e^{B_\epsilon^2 + P_\epsilon^2 - A_\epsilon^2}} + \left( \sqrt{\pi} - {\textnormal{erfc}} \left( P_\epsilon \right) \right)
\end{aligned} } 
\end{pmatrix}^T
\\
= &\left( u_a , \rho_c \left( x-c-u_a t \right) \right).
\end{aligned}
\]
Here we have used the strict inequalities

\[
\begin{aligned}
&(b-x)^2 + (a+u_a t-x)^2 - (a-x)^2 = (a+u_a t-x)^2 + 2 \left( b-a \right) \left| \frac{a+b}{2} - x \right| > 0,
\\
&(d-x)^2 + (a+u_a t-x)^2 - (a-x)^2 = (a+u_a t-x)^2 + 2 \left( d-a \right) \left| \frac{a+d}{2} - x \right| > 0.
\end{aligned}
\]
For the remaining regions, the restriction $u_a < 0$ will automatically imply that $x > a + u_a t$. Therefore, the limit ${\displaystyle{\lim_{\epsilon \rightarrow 0} \left( u^\epsilon , R^\epsilon \right)}}$ in each of these regions will be evaluated as follows:

	\item $a < x < c$
\[
\begin{aligned}
&\lim_{\epsilon \rightarrow 0} \begin{pmatrix}
\frac{\begin{aligned}
u_a\ \frac{f(P_\epsilon)}{P_\epsilon\ e^{A_\epsilon^2}} + \frac{\epsilon}{\sqrt{2t \epsilon}} \left( 1 - e^{- \frac{| u_b |}{\epsilon}} \right) e^{- B_\epsilon^2}
\end{aligned}}{
\begin{aligned}
\sqrt{\pi} - {\textnormal{erfc}}(A_\epsilon) + {\textnormal{erfc}}(B_\epsilon) \left( e^{- \frac{| u_b |}{\epsilon}} - 1 \right) + \frac{f(P_\epsilon)}{P_\epsilon\ e^{A_\epsilon^2}}
\end{aligned} }
\\
-------------------------
\\
\frac{
\begin{aligned}
\sqrt{2 t \epsilon}\ \rho_c\ e^{- A_\epsilon^2} + \rho_c \left( x - c - u_a t \right) \frac{f(P_\epsilon)}{P_\epsilon\ e^{A_\epsilon^2}}
\\
- \frac{t \epsilon}{\sqrt{2 t \epsilon}}\ \rho_c\ e^{- C_\epsilon^2} + \rho_c (x-c) \left( \sqrt{\pi} - \textnormal{erfc} \left( A_\epsilon \right) - \textnormal{erfc} \left( C_\epsilon \right) \right)
\\
+ \rho_d\ e^{- \frac{| u_b |}{\epsilon}}\ \textnormal{erfc} \left( D_\epsilon \right)\
\end{aligned}
}{
\begin{aligned}
\sqrt{\pi} - {\textnormal{erfc}}(A_\epsilon) + {\textnormal{erfc}}(B_\epsilon) \left( e^{- \frac{| u_b |}{\epsilon}} - 1 \right) + \frac{f(P_\epsilon)}{P_\epsilon\ e^{A_\epsilon^2}} 
\end{aligned} }
\end{pmatrix}^T
= \left( 0 , \rho_c \left( x-c \right) \right).
\end{aligned}
\]	

	\item $c < x < b$

\[
\begin{aligned}
&\lim_{\epsilon \rightarrow 0} \begin{pmatrix}
\frac{\begin{aligned}
u_a\ \frac{f(P_\epsilon)}{P_\epsilon\ e^{A_\epsilon^2}} + \frac{\epsilon}{\sqrt{2t \epsilon}} \cdot \left( 1 - e^{- \frac{| u_b |}{\epsilon}} \right) e^{- B_\epsilon^2}
\end{aligned}}{
\begin{aligned}
\sqrt{\pi} - {\textnormal{erfc}}(A_\epsilon) + {\textnormal{erfc}}(B_\epsilon) \left( e^{- \frac{| u_b |}{\epsilon}} - 1 \right) + \frac{f(P_\epsilon)}{P_\epsilon\ e^{A_\epsilon^2}}
\end{aligned} }
\\
----------------------
\\
\frac{
\begin{aligned}
\sqrt{2 t \epsilon}\ \rho_c\ e^{- A_\epsilon^2} + \rho_c ( x - c - u_a t )\ \frac{f(P_\epsilon)}{P_\epsilon\ e^{A_\epsilon^2}}
\\
- \frac{t \epsilon}{\sqrt{2 t \epsilon}}\ \rho_c\ e^{- C_\epsilon^2} + \rho_c (x-c) \left( \textnormal{erfc} \left( C_\epsilon \right) - \textnormal{erfc} \left( A_\epsilon \right) \right)
\\
+ \rho_d\ e^{- \frac{| u_b |}{\epsilon}}\ \textnormal{erfc} \left( D_\epsilon \right)\
\end{aligned}
}{
\begin{aligned}
\sqrt{\pi} - {\textnormal{erfc}}(A_\epsilon) + {\textnormal{erfc}}(B_\epsilon) \left( e^{- \frac{| u_b |}{\epsilon}} - 1 \right) + \frac{f(P_\epsilon)}{P_\epsilon\ e^{A_\epsilon^2}}
\end{aligned} }
\end{pmatrix}
= \left( 0 , 0 \right).
\end{aligned} 
\]	

	\item $b < x < d$
	
\[
\begin{aligned}
&\lim_{\epsilon \rightarrow 0} \begin{pmatrix}
\frac{\begin{aligned}
u_a\ \frac{\left( x-b \right) f(P_\epsilon)}{\left( x-a-u_a t \right) e^{A_\epsilon^2 - B_\epsilon^2}} + \frac{x-b}{2t} \left( 1 - e^{- \frac{| u_b |}{\epsilon}} \right)
\end{aligned}}{
\begin{aligned}
\sqrt{\pi}\ B_\epsilon\ e^{B_\epsilon^2 - \frac{u_b}{\epsilon}} - \frac{\left( x-b \right) f(A_\epsilon)}{\left( x-a \right) e^{A_\epsilon^2 - B_\epsilon^2}} + f(B_\epsilon) \left( 1 - e^{- \frac{| u_b |}{\epsilon}} \right) + \frac{\left( x-b \right) f(P_\epsilon)}{\left( x-a-u_a t \right) e^{A_\epsilon^2 - B_\epsilon^2}}
\end{aligned} }
\\
-----------------------------------
\\
\frac{
\begin{aligned}
\rho_c\ \frac{x-b}{e^{A_\epsilon^2 - B_\epsilon^2}} + \rho_c \frac{\left( x-c-u_a t \right) \left( x-b \right) f(P_\epsilon)}{\left( x-a-u_a t \right) e^{A_\epsilon^2 - B_\epsilon^2}} 
\\
- \rho_c\ \frac{x-b}{2\ e^{C_\epsilon^2 - B_\epsilon^2}} + \rho_c \left( x-c \right) \left( \frac{\left( x-b \right) f(C_\epsilon)}{\left( x-c \right) e^{C_\epsilon^2 - B_\epsilon^2}} - \frac{\left( x-b \right) f(A_\epsilon)}{\left( x-a \right) e^{A_\epsilon^2 - B_\epsilon^2}} \right)
\\
+ \rho_d\ \frac{\left( x-b \right) f(D_\epsilon)}{\left( d-x \right) e^{D_\epsilon^2 - B_\epsilon^2}} e^{- \frac{| u_b |}{\epsilon}}
\end{aligned}
}{
\begin{aligned}
\sqrt{\pi}\ B_\epsilon\ e^{B_\epsilon^2 - \frac{u_b}{\epsilon}} - \frac{\left( x-b \right) f(A_\epsilon)}{\left( x-a \right) e^{A_\epsilon^2 - B_\epsilon^2}} + f(B_\epsilon) \left( 1 - e^{- \frac{| u_b |}{\epsilon}} \right) + \frac{\left( x-b \right) f(P_\epsilon)}{\left( x-a-u_a t \right) e^{A_\epsilon^2 - B_\epsilon^2}}
\end{aligned} }
\end{pmatrix}^T
\\
= &\begin{cases}
\left( 0 , 0 \right),\ &{x > b + \sqrt{2 u_b t}},
\\
\left( \frac{x-b}{t} , 0 \right),\ &{x < b + \sqrt{2 u_b t}}.
\end{cases}
\end{aligned}
\]	

	\item $x > d$ 
	
\[
\begin{aligned}
&\lim_{\epsilon \rightarrow 0} \begin{pmatrix}
\frac{\begin{aligned}
u_a\ \frac{\left( x-b \right) f(P_\epsilon)}{\left( x-a-u_a t \right) e^{A_\epsilon^2 - B_\epsilon^2}} + \frac{x-b}{2t} \left( 1 - e^{- \frac{| u_b |}{\epsilon}} \right)
\end{aligned}}{
\begin{aligned}
\sqrt{\pi}\ B_\epsilon\ e^{B_\epsilon^2 - \frac{u_b}{\epsilon}} - \frac{\left( x-b \right) f(A_\epsilon)}{\left( x-a \right) e^{A_\epsilon^2 - B_\epsilon^2}} + f(B_\epsilon) \left( 1 - e^{- \frac{| u_b |}{\epsilon}} \right) + \frac{\left( x-b \right) f(P_\epsilon)}{\left( x-a-u_a t \right) e^{A_\epsilon^2 - B_\epsilon^2}}
\end{aligned} }
\\
-----------------------------------
\\
\frac{
\begin{aligned}
\rho_c\ \frac{x-b}{e^{A_\epsilon^2 - B_\epsilon^2}} + \rho_c \frac{\left( x-c-u_a t \right) \left( x-b \right) f(P_\epsilon)}{\left( x-a-u_a t \right) e^{A_\epsilon^2 - B_\epsilon^2}} 
\\
- \rho_c\ \frac{x-b}{2\ e^{C_\epsilon^2 - B_\epsilon^2}} + \rho_c \left( x-c \right) \left( \frac{\left( x-b \right) f(C_\epsilon)}{\left( x-c \right) e^{C_\epsilon^2 - B_\epsilon^2}} - \frac{\left( x-b \right) f(A_\epsilon)}{\left( x-a \right) e^{A_\epsilon^2 - B_\epsilon^2}} \right)
\\
+ \rho_d\ B_\epsilon\ e^{B_\epsilon^2 - \frac{u_b}{\epsilon}} \left( \sqrt{\pi} - \textnormal{erfc} \left( D_\epsilon \right) \right)
\end{aligned}
}{
\begin{aligned}
\sqrt{\pi}\ B_\epsilon\ e^{B_\epsilon^2 - \frac{u_b}{\epsilon}} - \frac{\left( x-b \right) f(A_\epsilon)}{\left( x-a \right) e^{A_\epsilon^2 - B_\epsilon^2}} + f(B_\epsilon) \left( 1 - e^{- \frac{| u_b |}{\epsilon}} \right) + \frac{\left( x-b \right) f(P_\epsilon)}{\left( x-a-u_a t \right) e^{A_\epsilon^2 - B_\epsilon^2}}
\end{aligned} }
\end{pmatrix}^T
\\
= &\begin{cases}
\left( 0 , \rho_d \right),\ &{x > b + \sqrt{2 u_b t}},
\\
\left( \frac{x-b}{t} , 0 \right),\ &{x < b + \sqrt{2 u_b t}}.
\end{cases}
\end{aligned}
\]	
\end{enumerate}

Now, to recover $\rho$, set $u = \lim_{\epsilon \rightarrow 0} u^\epsilon$ and $R = \lim_{\epsilon \rightarrow 0} R^\epsilon$. For each $s \geq 0$, let us define

\begin{itemize}

	\item[$(i)$] $r(s) := a + u_a s$,

	\item[$(ii)$] $p(s) := b + \sqrt{2 u_b s}$.

\end{itemize}
The explicit structure of $u$ and $R$ under the present case can then be described as follows:

\[
\begin{aligned}
u(x,t) &= \begin{cases}
u_a,\ &{ x < r(t) },
\\
\frac{x-a}{t},\ &{ x \in \Big( r(t) , a \Big) },
\\
0,\ &{ x \in \Big( a , b \Big) \cup \Big( p(t) , \infty \Big) },
\\
\frac{x-b}{t},\ &{ x \in \Big( b , p(t) \Big) },
\end{cases}
\\
R(x,t) &= \begin{cases}
\rho_c \left( x-c-u_a t \right),\ &{ x \in \Big( -\infty , r(t) \Big) },
\\
\rho_c\ \frac{2 \left( x-a-u_a t \right) \left( x-a \right) + \left( a-c \right) u_a t}{u_a t},\ &{ x \in \Big( r(t) , a \Big) },
\\
\rho_c \left( x-c \right),\ &{ x \in \Big( a , c \Big) },
\\
0,\ &x \in { \big( c , d \big) } \cup { \big( d , p(t) \big) },
\\
\rho_d,\ &{ x \in \Big( \max{ \big\{ d , p(t) \big\} } , \infty \Big) }.
\end{cases}
\end{aligned}
\]
The next step is to consider an arbitrary test function $\phi \in C^{\infty}_{c} \left( \textbf{R}^1 \times \left[ 0 , \infty \right) ; \textbf{R}^1 \right)$ and consider the action of the distributional derivative $R_x$ of $R$ with respect to the space variable $x$ on $\phi$. We see that

\[
\begin{aligned}
\left\langle R_x , \phi \right\rangle = &- \left\langle R , \phi_x \right\rangle
\\
= &- \rho_c \int_{0}^{\infty} \Bigg[ \int_{-\infty}^{a+u_a t} \left( x-c-u_a t \right) \phi_x\ dx + \int_{a+u_a t}^{a} \frac{2 \left( x-a-u_a t \right) \left( x-a \right) + \left( a-c \right) u_a t}{u_a t} \phi_x\ dx
\\
&+ \int_{a}^{c} \left( x-c \right) \phi_x\ dx \Bigg] dt - \rho_d \Bigg[ \int_{0}^{\infty} \int_{\gamma_{ {}_{d} } (t)}^{\infty}\ \phi_x\ dx dt \Bigg]
\\
= &\rho_c \Bigg[ \int_{0}^{\infty} \int_{-\infty}^{a + u_a t} \phi \left( x , t \right) dx dt + \left( c-a \right) \int_{0}^{\infty} \phi \left( a + u_a t , t \right) dt
\\
&+ \frac{4}{u_a t} \int_{0}^{\infty} \int_{a + u_a t}^{a} \left( x-a \right) \phi \left( x , t \right) dx dt - 2 \int_{0}^{\infty} \int_{a + u_a t}^{a} \phi \left( x , t \right) dx dt
\\
&+ \left( c-a \right) \int_{0}^{\infty} \phi \left( a , t \right) dt - \left( c-a \right) \int_{0}^{\infty} \phi \left( a + u_a t , t \right) dt
\\
&+ \int_{0}^{\infty} \int_{a}^{c} \phi \left( x , t \right) dx dt - \left( c-a \right) \int_{0}^{\infty} \phi \left( a , t \right) dt \Bigg] + \rho_d \int_{0}^{\infty} \phi \left( \gamma_{ {}_{d} } (t) , t \right) dt
\\
= &\left\langle \rho_c \left( \chi_{ {}_{ \left( -\infty , a + u_a t \right) } } + \frac{4 \left( x-a \right) - 2 u_a t}{u_a t} \chi_{ {}_{ \left( a + u_a t , a \right) } } + \chi_{ {}_{ \left( a , c \right) } } \right) + \rho_d\ \delta_{ x = \gamma_{ {}_{d} } (t) } , \phi \right\rangle,
\end{aligned}
\]
where the curve $x = \gamma_{ {}_{d} } (t)$ is defined on $\left[ 0 , \infty \right)$ by 

\[
\begin{aligned}
\gamma_{ {}_{d} } (t) := \begin{cases}
d,\ &{ 0 \leq t \leq t^* := \frac{(d-b)^2}{2 u_b} },
\\
b + \sqrt{2 u_b t},\ &{ t > t^* }.
\end{cases}
\end{aligned} 
\]
Therefore $\rho = \rho_c \left( \chi_{ {}_{ {}_{ \left( -\infty , a + u_a t \right) } } } + \frac{4 \left( x-a \right) - 2 u_a t}{u_a t} \chi_{ {}_{ {}_{ \left( a + u_a t , a \right) } } } + \chi_{ {}_{ {}_{ \left( a , c \right) } } } \right) + \rho_d\ \delta_{ x = \gamma_{ {}_{d} } (t) }$.

\textbf{Case 2.} $u_a > 0$, $u_b > 0$
	
\begin{enumerate}

	\item $x < a$

In this region, we have $x < a + u_a t$, since $u_a > 0$. Hence $\lim_{\epsilon \rightarrow 0}\ \left( u^\epsilon , R^\epsilon \right)$ equals	

\[
\begin{aligned}
&\lim_{\epsilon \rightarrow 0} 
\begin{pmatrix}
\frac{
\begin{aligned}
u_a \left( \sqrt{\pi} - {\textnormal{erfc}} \left( P_\epsilon \right) \right) + \frac{a-x}{2t} \cdot \frac{1 - e^{- \frac{| u_b |}{\epsilon}}}{A_\epsilon\ e^{P_\epsilon^2 + B_\epsilon^2 - A_\epsilon^2}}
\end{aligned}
}{
\begin{aligned}
\frac{f(A_\epsilon)}{A_\epsilon\ e^{P_\epsilon^2}} + \frac{f(B_\epsilon)}{B_\epsilon\ e^{P_\epsilon^2 + B_\epsilon^2 - A_\epsilon^2}} + \left( \sqrt{\pi} - {\textnormal{erfc}} \left( P_\epsilon \right) \right)
\end{aligned} }
\\
----------------------------
\\
\frac{
\begin{aligned}
\rho_c \left[ \frac{\left( a-x \right)}{A_\epsilon\ e^{P_\epsilon^2}} + \left( x-c-u_a t \right) \left( \sqrt{\pi} - \textnormal{erfc} \left( P_\epsilon \right) \right) - \frac{a-x}{2 A_\epsilon\ e^{P_\epsilon^2 + C_\epsilon^2 - A_\epsilon^2}} \right.
\\
+ \left. \left( x-c \right) \left( \frac{f(A_\epsilon)}{A_\epsilon\ e^{P_\epsilon^2}} - \frac{f(C_\epsilon)}{C_\epsilon\ e^{P_\epsilon^2 + C_\epsilon^2 - A_\epsilon^2}} \right) \right] + \rho_d\ \frac{f(D_\epsilon) \cdot e^{- \frac{| u_b |}{\epsilon}}}{D_\epsilon\ e^{P_\epsilon^2 + D_\epsilon^2 - A_\epsilon^2}}
\end{aligned}
}{
\begin{aligned}
\frac{f(A_\epsilon)}{A_\epsilon\ e^{P_\epsilon^2}} + \frac{f(B_\epsilon)}{B_\epsilon\ e^{P_\epsilon^2 + B_\epsilon^2 - A_\epsilon^2}} + \left( \sqrt{\pi} - {\textnormal{erfc}} \left( P_\epsilon \right) \right)
\end{aligned} }
\end{pmatrix}^T
\\
= &\left( u_a , \rho_c \left( x-c-u_a t \right) \right).
\end{aligned}
\]	
Here we have used the strict inequalities

\[
\begin{aligned}
&(x-a-u_a t)^2 + (x-b)^2 - (x-a)^2 = (x-a-u_a t)^2 + 2 \left| x - \frac{a+b}{2} \right| (b-a) > 0,
\\
&(x-a-u_a t)^2 + (x-d)^2 - (x-a)^2 = (x-a-u_a t)^2 + 2 \left| x - \frac{a+d}{2} \right| (d-a) > 0.
\end{aligned}
\]
In each of the remaining regions under this case, the limit $\lim_{\epsilon \rightarrow 0}\ \left( u^\epsilon , R^\epsilon \right)$ has to be evaluated separately in the subregions $x > a + u_a t$ and $x < a + u_a t$.

	\item $a < x < c$

\subsection*{Subregion 1. $x > a + u_a t$}

\[
\begin{aligned}
&\lim_{\epsilon \rightarrow 0} 
\begin{pmatrix}
\frac{\begin{aligned}
u_a\ \frac{f(P_\epsilon)}{P_\epsilon\ e^{A_\epsilon^2}} + \frac{\left( b-x \right) \left( 1 - e^{- \frac{| u_b |}{\epsilon}} \right)}{2t\ B_\epsilon\ e^{B_\epsilon^2}}
\end{aligned}}{
\begin{aligned}
\left( \sqrt{\pi} - {\textnormal{erfc}} \left( A_\epsilon \right) \right) + {\textnormal{erfc}} \left( B_\epsilon \right) \left( e^{- \frac{| u_b |}{\epsilon}} - 1 \right) + \frac{f(P_\epsilon)}{P_\epsilon\ e^{A_\epsilon^2}}
\end{aligned} }
\\
--------------------------
\\
\frac{
\begin{aligned}
\rho_c \bigg[ \sqrt{2 t \epsilon} \left( e^{- A_\epsilon^2} - \frac{1}{2\ e^{C_\epsilon^2}} \right) + \frac{\left( x-c-u_a t \right) f(P_\epsilon)}{P_\epsilon\ e^{A_\epsilon^2}}
\\
+ \left( x-c \right) \left( \sqrt{\pi} - {\textnormal{erfc}} \left( A_\epsilon \right) - {\textnormal{erfc}} \left( C_\epsilon \right) \right) \bigg] + \rho_d\ {\textnormal{erfc}} \left( D_\epsilon \right) e^{- \frac{| u_b |}{\epsilon}}
\end{aligned} }{
\begin{aligned}
\left( \sqrt{\pi} - {\textnormal{erfc}} \left( A_\epsilon \right) \right) + {\textnormal{erfc}} \left( B_\epsilon \right) \left( e^{- \frac{| u_b |}{\epsilon}} - 1 \right) + \frac{f(P_\epsilon)}{P_\epsilon\ e^{A_\epsilon^2}}
\end{aligned} }
\end{pmatrix}^T
= &\left( 0 , \rho_c \left( x-c \right) \right).
\end{aligned}
\]

\subsection*{Subregion 2. $x < a + u_a t$}	

\[
\begin{aligned}
&\lim_{\epsilon \rightarrow 0} 
\begin{pmatrix}
\frac{
\begin{aligned}
u_a \left( \sqrt{\pi} - {\textnormal{erfc}} \left( P_\epsilon \right) \right) e^{P_\epsilon^2 - A_\epsilon^2} + \frac{\left( x-a \right) \left( 1 - e^{- \frac{| u_b |}{\epsilon}} \right)}{2t\ A_\epsilon\ e^{B_\epsilon^2}}
\end{aligned} }{
\begin{aligned}
\left( \sqrt{\pi} - {\textnormal{erfc}} \left( A_\epsilon \right) \right) + {\textnormal{erfc}} \left( B_\epsilon \right) \left( e^{- \frac{| u_b |}{\epsilon}} - 1 \right) + \left( \sqrt{\pi} - {\textnormal{erfc}} \left( P_\epsilon \right) \right) e^{P_\epsilon^2 - A_\epsilon^2}
\end{aligned} }
\\
------------------------------
\\
\frac{
\begin{aligned}
\rho_c \bigg[ \sqrt{2 t \epsilon} \left( e^{- A_\epsilon^2} - \frac{1}{2\ e^{C_\epsilon^2}} \right) + \left( x-c-u_a t \right) \left( \sqrt{\pi} - {\textnormal{erfc}} \left( P_\epsilon \right) \right) e^{P_\epsilon^2 - A_\epsilon^2}
\\
+ \left( x-c \right) \left( \sqrt{\pi} - {\textnormal{erfc}} \left( A_\epsilon \right) - {\textnormal{erfc}} \left( C_\epsilon \right) \right) \bigg] + \rho_d\ {\textnormal{erfc}} \left( D_\epsilon \right) e^{- \frac{| u_b |}{\epsilon}}
\end{aligned} }{
\begin{aligned}
\left( \sqrt{\pi} - {\textnormal{erfc}} \left( A_\epsilon \right) \right) + {\textnormal{erfc}} \left( B_\epsilon \right) \left( e^{- \frac{| u_b |}{\epsilon}} - 1 \right) + \left( \sqrt{\pi} - {\textnormal{erfc}} \left( P_\epsilon \right) \right) e^{P_\epsilon^2 - A_\epsilon^2}
\end{aligned} }
\end{pmatrix}^T
\\
= &\begin{cases}
\left( u_a , \rho_c \left( x-c-u_a t \right) \right),\ &{ x \in \Big( a , a + \frac{u_a}{2} \cdot t \Big) },
\\
\left( 0 , \rho_c \left( x-c \right) \right),\ &x \in  \Big( a + \frac{u_a}{2} \cdot t , a + u_a t \Big).
\end{cases}
\end{aligned}
\]

	\item $c < x < b$
	
\subsection*{Subregion 1. $x > a + u_a t$}

\[
\begin{aligned}
&\lim_{\epsilon \rightarrow 0}
\begin{pmatrix}
\frac{
\begin{aligned}
u_a\ \frac{f(P_\epsilon)}{P_\epsilon\ e^{A_\epsilon^2}} + \frac{(x-a) \left( 1 - e^{- \frac{| u_b |}{\epsilon}} \right)}{2t\ A_\epsilon\ e^{B_\epsilon^2}}
\end{aligned}
}{
\begin{aligned}
\left( \sqrt{\pi} - {\textnormal{erfc}} \left( A_\epsilon \right) \right) + {\textnormal{erfc}} \left( B_\epsilon \right) \left( e^{- \frac{| u_b |}{\epsilon}} - 1 \right) + \frac{f(P_\epsilon)}{P_\epsilon\ e^{A_\epsilon^2}}
\end{aligned}
}
\\
------------------------
\\
\frac{
\begin{aligned}
\rho_c\ \bigg[ \sqrt{2 t \epsilon} \left( \frac{1}{e^{A_\epsilon^2}} - \frac{1}{2\ e^{C_\epsilon^2}} \right) + \left( x-c-u_a t \right) \frac{f(P_\epsilon)}{P_\epsilon\ e^{A_\epsilon^2}}
\\
+ \left( x-c \right) \left( {\textnormal{erfc}} \left( C_\epsilon \right) - {\textnormal{erfc}} \left( A_\epsilon \right) \right) \bigg] + \rho_d\ {\textnormal{erfc}} \left( D_\epsilon \right) e^{- \frac{| u_b |}{\epsilon}}
\end{aligned}
}{
\begin{aligned}
\left( \sqrt{\pi} - {\textnormal{erfc}} \left( A_\epsilon \right) \right) + {\textnormal{erfc}} \left( B_\epsilon \right) \left( e^{- \frac{| u_b |}{\epsilon}} - 1 \right) + \frac{f(P_\epsilon)}{P_\epsilon\ e^{A_\epsilon^2}}
\end{aligned}
}
\end{pmatrix}^T
= &\left( 0 , 0 \right).
\end{aligned}
\]

\subsection*{Subregion 2. $x < a + u_a t$}
	
\[
\begin{aligned}
&\lim_{\epsilon \rightarrow 0}
\begin{pmatrix}
\frac{
\begin{aligned}
u_a \left( \sqrt{\pi} - {\textnormal{erfc}} \left( P_\epsilon \right) \right) e^{P_\epsilon^2 - A_\epsilon^2} + \frac{(b-x) \left( 1 - e^{- \frac{| u_b |}{\epsilon}} \right)}{2t\ B_\epsilon\ e^{B_\epsilon^2}}
\end{aligned}
}{
\begin{aligned}
\left( \sqrt{\pi} - {\textnormal{erfc}} \left( A_\epsilon \right) \right) + {\textnormal{erfc}} \left( B_\epsilon \right) \left( e^{- \frac{| u_b |}{\epsilon}} - 1 \right) + \left( \sqrt{\pi} - {\textnormal{erfc}} \left( P_\epsilon \right) \right) e^{P_\epsilon^2 - A_\epsilon^2}
\end{aligned}
}
\\
------------------------------
\\
\frac{
\begin{aligned}
\rho_c \bigg[ \sqrt{2 t \epsilon} \left( \frac{1}{e^{A_\epsilon^2}} - \frac{1}{2\ e^{C_\epsilon^2}} \right) + (x-c-u_a t) \left( \sqrt{\pi} - {\textnormal{erfc}} \left( P_\epsilon \right) \right) e^{P_\epsilon^2 - A_\epsilon^2}
\\
+ (x-c) \left( {\textnormal{erfc}} \left( C_\epsilon \right) - {\textnormal{erfc}} \left( A_\epsilon \right) \right) \bigg] + \rho_d\ {\textnormal{erfc}} \left( D_\epsilon \right) e^{- \frac{| u_b |}{\epsilon}}
\end{aligned}
}{
\begin{aligned}
\left( \sqrt{\pi} - {\textnormal{erfc}} \left( A_\epsilon \right) \right) + {\textnormal{erfc}} \left( B_\epsilon \right) \left( e^{- \frac{| u_b |}{\epsilon}} - 1 \right) + \left( \sqrt{\pi} - {\textnormal{erfc}} \left( P_\epsilon \right) \right) e^{P_\epsilon^2 - A_\epsilon^2}
\end{aligned}
}
\end{pmatrix}^T
\\
= &\begin{cases}
\left( u_a , \rho_c \left( x-c-u_a t \right) \right),\ &{ x \in \left( c , a + \frac{u_a}{2} \cdot t \right) },
\\
\left( 0 , 0 \right),\ &{ x \in \left( a + \frac{u_a}{2} \cdot t , a + u_a t \right) }.
\end{cases}
\end{aligned}
\]

	\item $b < x < d$
	
Let us first define
\begin{enumerate}

	\item[$(i)$] $l(s) := a + \frac{u_b}{u_a} + \frac{u_a}{2} \cdot s$,
	
	\item[$(ii)$] $r(s) := a + u_a s$,
	
	\item[$(iii)$] $p(s) := b + \sqrt{2 u_b s}$,
	
	\item[$(iv)$] $q(s) := b+u_a s-\sqrt{2\ u_a \left( b-a \right) s}$

\end{enumerate}
for each $s \geq 0$. Then we can evaluate $\lim_{\epsilon \rightarrow 0} \left( u^\epsilon , R^\epsilon \right)$ separately in the subregions $x > a + u_a t$ and $x < a + u_a t$ as follows:

\subsection*{Subregion 1. $x > a + u_a t$} 

\[
\begin{aligned}
&\lim_{\epsilon \rightarrow 0} 
\begin{pmatrix}
\frac{
\begin{footnotesize}
\begin{aligned}
u_a\ \frac{\left( x-b \right) f(P_\epsilon)\ e^{B_\epsilon^2 - A_\epsilon^2}}{x-a-u_a t} + \frac{(x-b) \left( 1 - e^{- \frac{| u_b |}{\epsilon}} \right)}{2t}
\end{aligned}
\end{footnotesize}
}{
\begin{footnotesize}
\begin{aligned}
\sqrt{\pi}\ B_\epsilon\ e^{B_\epsilon^2 - \frac{u_b}{\epsilon}} - \frac{\left( x-b \right) f(A_\epsilon)\ e^{B_\epsilon^2 - A_\epsilon^2}}{x-a} 
\\
+ f(B_\epsilon) \left( 1 - e^{- \frac{| u_b |}{\epsilon}} \right) + \frac{\left( x-b \right) f(P_\epsilon)\ e^{B_\epsilon^2 - A_\epsilon^2}}{x-a-u_a t}
\end{aligned}
\end{footnotesize}
}
\\
---------------------------------
\\
\frac{
\begin{footnotesize}
\begin{aligned}
&\rho_c \left[ (x-b) \left( \frac{1}{e^{A_\epsilon^2 - B_\epsilon^2}} - \frac{1}{2\ e^{C_\epsilon^2 - B_\epsilon^2}} \right) + \frac{\left( x-c-u_a t \right) \left( x-b \right) f(P_\epsilon)}{x-a-u_a t}\ e^{B_\epsilon^2 - A_\epsilon^2} \right.
\\
+ &\left. (x-c) \left( \frac{\left( x-b \right) f(C_\epsilon)\ e^{B_\epsilon^2 - C_\epsilon^2}}{x-c} - \frac{\left( x-b \right) f(A_\epsilon)\ e^{B_\epsilon^2 - A_\epsilon^2}}{x-a} \right) \right] 
\\
+ &\rho_d\ {\textnormal{erfc}} \left( D_\epsilon \right) B_\epsilon\ e^{B_\epsilon^2 - \frac{u_b}{\epsilon}}
\end{aligned}
\end{footnotesize}
}{
\begin{footnotesize}
\begin{aligned}
&\sqrt{\pi}\ B_\epsilon\ e^{B_\epsilon^2 - \frac{u_b}{\epsilon}} - \frac{\left( x-b \right) f(A_\epsilon)\ e^{B_\epsilon^2 - A_\epsilon^2}}{x-a} 
\\
+ &f(B_\epsilon) \left( 1 - e^{- \frac{| u_b |}{\epsilon}} \right) + \frac{\left( x-b \right) f(P_\epsilon)\ e^{B_\epsilon^2 - A_\epsilon^2}}{x-a-u_a t}
\end{aligned}
\end{footnotesize}
}
\end{pmatrix}^T
\\
= &\begin{cases}
\left( 0 , 0 \right),\ &{ x \in \Big( \max{ \{ p(t) , r(t) \} }\ ,\ d \Big) },
\\
\left( \frac{x-b}{t} , 0 \right),\ &{ x \in \Big( r(t)\ ,\ p(t) \Big) }.
\end{cases}
\end{aligned}
\]	

\subsection*{Subregion 2. $x < a + u_a t$}

\[
\begin{aligned}
&\lim_{\epsilon \rightarrow 0} 
\begin{pmatrix}
\frac{
\begin{aligned}
u_a \left( \sqrt{\pi} - {\textnormal{erfc}} \left( P_\epsilon \right) \right) B_\epsilon\ e^{P_\epsilon^2 + B_\epsilon^2 - A_\epsilon^2} + \frac{(x-b) \left( 1 - e^{- \frac{| u_b |}{\epsilon}} \right)}{2t}
\end{aligned}
}{
\begin{aligned}
\sqrt{\pi}\ B_\epsilon\ e^{B_\epsilon^2 - \frac{u_b}{\epsilon}} - \frac{\left( x-b \right) f(A_\epsilon)\ e^{B_\epsilon^2 - A_\epsilon^2}}{x-a}
\\
+ f(B_\epsilon) \left( 1 - e^{- \frac{| u_b |}{\epsilon}} \right) + \left( \sqrt{\pi} - {\textnormal{erfc}} \left( P_\epsilon \right) \right) B_\epsilon\ e^{P_\epsilon^2 + B_\epsilon^2 - A_\epsilon^2}
\end{aligned}
}
\\
--------------------------------------
\\
\frac{
\begin{aligned}
\rho_c \left[ (x-b) \left( e^{B_\epsilon^2 - A_\epsilon^2} - \frac{1}{2}\ e^{B_\epsilon^2 - C_\epsilon^2} \right) + (x-c-u_a t) \left( \sqrt{\pi} - {\textnormal{erfc}} \left( P_\epsilon \right) \right) B_\epsilon\ e^{P_\epsilon^2 + B_\epsilon^2 - A_\epsilon^2} \right. 
\\
+ \left. (x-c) \left( \frac{\left( x-b \right) f(C_\epsilon)\ e^{B_\epsilon^2 - C_\epsilon^2}}{x-c} - \frac{\left( x-b \right) f(A_\epsilon)\ e^{B_\epsilon^2 - A_\epsilon^2}}{x-a} \right) \right] + \rho_d\ {\textnormal{erfc}} \left( D_\epsilon \right) B_\epsilon\ e^{B_\epsilon^2 - \frac{u_b}{\epsilon}}
\end{aligned}
}{
\begin{aligned}
\sqrt{\pi}\ B_\epsilon\ e^{B_\epsilon^2 - \frac{u_b}{\epsilon}} - \frac{\left( x-b \right) f(A_\epsilon)\ e^{B_\epsilon^2 - A_\epsilon^2}}{x-a}
\\
+ f(B_\epsilon) \left( 1 - e^{- \frac{| u_b |}{\epsilon}} \right) + \left( \sqrt{\pi} - {\textnormal{erfc}} \left( P_\epsilon \right) \right) B_\epsilon\ e^{P_\epsilon^2 + B_\epsilon^2 - A_\epsilon^2}
\end{aligned}
}
\end{pmatrix}^T
\\
= &\begin{cases}
\left( 0 , 0 \right),\ &{ x \in \Big( \max{ \{ p(t) , q(t) \} }\ ,\ r(t) \Big) \cup \Big( \max{ \{ l(t) , p(t) \big\}\ ,\ \min{ \big\{ q(t) , r(t) \} } } \Big) },
\\
\left( u_a , \rho_c \left( x-c-u_a t \right) \right),\ &{ x \in \Big( -\infty\ ,\ \min{ \{ p(t) , q(t) , r(t) \} \Big) } \cup \Big( p(t)\ ,\ \min{ \{ l(t) , q(t) , r(t) \} } \Big) },
\\
\left( \frac{x-b}{t} , 0 \right),\ &{ x \in \Big( q(t)\ ,\ \min{ \{ p(t) , r(t) \} } \Big) }.
\end{cases}
\end{aligned}
\]

	\item $x > d$
	
As in the preceding case, we need to consider the following curves defined in $\left[ 0 , \infty \right)$:

\begin{enumerate}

	\item[$(i)$] $l(s) = a + \frac{u_b}{u_a} + \frac{u_a}{2} \cdot s$,
	
	\item[$(ii)$] $r(s) = a + u_a s$,
	
	\item[$(iii)$] $p(s) = b + \sqrt{2 u_b s}$,
	
	\item[$(iv)$] $q(s) = b+u_a s-\sqrt{2\ u_a \left( b-a \right) s}$.

\end{enumerate}
We can now describe the explicit structure of $\lim_{\epsilon \rightarrow 0} \left( u^\epsilon , R^\epsilon \right)$ in different subregions as follows: 	

\subsection*{Subregion 1. $x > a + u_a t$}

\[
\begin{aligned}
&\lim_{\epsilon \rightarrow 0} \begin{pmatrix}
\frac{
\begin{aligned}
u_a\ \frac{\left( x-b \right) f(P_\epsilon)\ e^{B_\epsilon^2 - A_\epsilon^2}}{x-a-u_a t} + \frac{(x-b) \left( 1 - e^{- \frac{| u_b |}{\epsilon}} \right)}{2t}
\end{aligned}
}{
\begin{aligned}
\sqrt{\pi}\ B_\epsilon\ e^{B_\epsilon^2 - \frac{u_b}{\epsilon}} - \frac{\left( x-b \right) f(A_\epsilon)\ e^{B_\epsilon^2 - A_\epsilon^2}}{x-a} 
\\
+ f(B_\epsilon) \left( 1 - e^{- \frac{| u_b |}{\epsilon}} \right) + \frac{\left( x-b \right) f(P_\epsilon)\ e^{B_\epsilon^2 - A_\epsilon^2}}{x-a-u_a t}
\end{aligned}
}
\\
---------------------------------
\\
\frac{
\begin{aligned}
\rho_c &\left[ (x-b) \left( \frac{1}{e^{A_\epsilon^2 - B_\epsilon^2}} - \frac{1}{2\ e^{C_\epsilon^2 - B_\epsilon^2}} \right) + \frac{\left( x-c-u_a t \right) \left( x-b \right) f(P_\epsilon)}{x-a-u_a t}\ e^{B_\epsilon^2 - A_\epsilon^2} \right.
\\
&+ \left. (x-c) \left( \frac{\left( x-b \right) f(C_\epsilon)\ e^{B_\epsilon^2 - C_\epsilon^2}}{x-c} - \frac{\left( x-b \right) f(A_\epsilon)\ e^{B_\epsilon^2 - A_\epsilon^2}}{x-a} \right) \right] 
\\
+\ \rho_d &\left( \sqrt{\pi} - {\textnormal{erfc}} \left( D_\epsilon \right) \right) B_\epsilon\ e^{B_\epsilon^2 - \frac{u_b}{\epsilon}}
\end{aligned}
}{
\begin{aligned}
\sqrt{\pi}\ B_\epsilon\ e^{B_\epsilon^2 - \frac{u_b}{\epsilon}} - f(A_\epsilon)\ \frac{\left( x-b \right) e^{B_\epsilon^2 - A_\epsilon^2}}{x-a} 
\\
+ f(B_\epsilon) \left( 1 - e^{- \frac{| u_b |}{\epsilon}} \right) + f(P_\epsilon)\ \frac{\left( x-b \right) e^{B_\epsilon^2 - A_\epsilon^2}}{x-a-u_a t}
\end{aligned}
}
\end{pmatrix}^T
\\
= &\begin{cases}
\left( 0 , \rho_d \right),\ &{ x \in \Big( \max{ \{ p(t) , r(t) \} }\ ,\ \infty \Big) },
\\
\left( \frac{x-b}{t} , 0 \right),\ &{ x \in \Big( r(t)\ ,\ p(t) \Big) }.
\end{cases}
\end{aligned}
\]

\subsection*{Subregion 2. $x < a + u_a t$}

\[
\begin{aligned}
&\lim_{\epsilon \rightarrow 0} 
\begin{pmatrix}
\frac{
\begin{aligned}
u_a \left( \sqrt{\pi} - {\textnormal{erfc}} \left( P_\epsilon \right) \right) B_\epsilon\ e^{P_\epsilon^2 + B_\epsilon^2 - A_\epsilon^2} + \frac{(x-b) \left( 1 - e^{- \frac{| u_b |}{\epsilon}} \right)}{2t}
\end{aligned}
}{
\begin{aligned}
\sqrt{\pi}\ B_\epsilon\ e^{B_\epsilon^2 - \frac{u_b}{\epsilon}} - f(A_\epsilon)\ \frac{\left( x-b \right) e^{B_\epsilon^2 - A_\epsilon^2}}{x-a}
\\
+\ f(B_\epsilon) \left( 1 - e^{- \frac{| u_b |}{\epsilon}} \right) + \left( \sqrt{\pi} - {\textnormal{erfc}} \left( P_\epsilon \right) \right) B_\epsilon\ e^{P_\epsilon^2 + B_\epsilon^2 - A_\epsilon^2}
\end{aligned}
}
\\
--------------------------------------
\\
\frac{
\begin{aligned}
\rho_c &\left[ (x-b) \left( \frac{1}{e^{A_\epsilon^2 - B_\epsilon^2}} - \frac{1}{2\ e^{C_\epsilon^2 - B_\epsilon^2}} \right) + (x-c-u_a t) \left( \sqrt{\pi} - {\textnormal{erfc}} \left( P_\epsilon \right) \right) B_\epsilon\ e^{P_\epsilon^2 + B_\epsilon^2 - A_\epsilon^2} \right. 
\\
&+ \left. (x-c) \left( \frac{\left( x-b \right) f(C_\epsilon)\ e^{B_\epsilon^2 - C_\epsilon^2}}{x-c} - \frac{\left( x-b \right) f(A_\epsilon)\ e^{B_\epsilon^2 - A_\epsilon^2}}{x-a} \right) \right] 
\\
+\ \rho_d &\left( \sqrt{\pi} - {\textnormal{erfc}} \left( D_\epsilon \right) \right) B_\epsilon\ e^{B_\epsilon^2 - \frac{u_b}{\epsilon}}
\end{aligned}
}{
\begin{aligned}
\sqrt{\pi}\ B_\epsilon\ e^{B_\epsilon^2 - \frac{u_b}{\epsilon}} - f(A_\epsilon)\ \frac{\left( x-b \right) e^{B_\epsilon^2 - A_\epsilon^2}}{x-a}
\\
+ f(B_\epsilon) \left( 1 - e^{- \frac{| u_b |}{\epsilon}} \right) + \left( \sqrt{\pi} - {\textnormal{erfc}} \left( P_\epsilon \right) \right) B_\epsilon\ e^{P_\epsilon^2 + B_\epsilon^2 - A_\epsilon^2}
\end{aligned}
}
\end{pmatrix}^T
\\
= &\begin{cases}
\left( 0 , \rho_d \right),\ &{ x \in \Big( \max{ \{ p(t) , q(t) \} }\ ,\ r(t) \Big) \cup \Big( \max{ \{ l(t) , p(t) \big\}\ ,\ \min{ \big\{ q(t) , r(t) \} } } \Big) },
\\
\left( u_a , \rho_c \left( x-c-u_a t \right) \right),\ &{ x \in \Big( -\infty\ ,\ \min{ \{ p(t) , q(t) , r(t) \} \Big) } \cup \Big( p(t)\ ,\ \min{ \{ l(t) , q(t) , r(t) \} } \Big) },
\\
\left( \frac{x-b}{t} , 0 \right),\ &{ x \in \Big( q(t)\ ,\ \min{ \{ p(t) , r(t) \} } \Big) }.
\end{cases}
\end{aligned}
\]

\end{enumerate}

Now we have to recover the $\rho$ component. For this purpose, set $R = \lim_{\epsilon \rightarrow 0} R^\epsilon$. There are three cases to consider, namely

\begin{itemize}

	\item $2 u_b < u_a \left( b-a \right)$

	\item $u_b < u_a \left( b-a \right) \leq 2 u_b$

	\item $u_a \left( b-a \right) \leq u_b$

\end{itemize}
Here we provide the details only for the case $2 u_b < u_a \left( b-a \right)$. The other cases can be studied similarly.

For further simplification, let us also assume that $x_{ {}_{p,r} } < d < x_{ {}_{p,l} }$, where $x_{ {}_{p,r} }$ and $x_{ {}_{p,l} }$ denote the respective $x$ co-ordinates of the points of intersection of $x = p(t)$ with the curves $x = r(t)$ and $x = l(t)$. 

In addition to the curves $x = l(t)$, $x = r(t)$, $x = p(t)$ and $x = q(t)$ defined above, let us also introduce the following curves on $\left[ 0 , \infty \right)$:

\[
\begin{aligned}
\tilde{l} (t) &:= a + \frac{u_a}{2} \cdot t, 
\\
\gamma_{ {}_{a} } (t) &:= \begin{cases}
a + \frac{u_a}{2} \cdot t,\ &{ 0 \leq t \leq \frac{2 \left( b-a \right)}{u_a} },
\\
b + u_a t - \sqrt{2 u_a \left( b-a \right) t},\ &{ \frac{2 \left( b-a \right)}{u_a} \leq t \leq t_{ {}_{p,l} } } := \left( \frac{\sqrt{2 u_b} + \sqrt{2 u_a \left( b-a \right)}}{u_a} \right)^2,
\\
a + \frac{u_b}{u_a} + \frac{u_a}{2} \cdot t,\ &{ t \geq t_{ {}_{p,l} } },
\end{cases}
\\
\gamma_{ {}_{b} } (t) &:= \begin{cases}
b + \sqrt{2 u_b t},\ &{ 0 \leq t \leq t_{ {}_{p,l} } },
\\
a + \frac{u_b}{u_a} + \frac{u_a}{2} \cdot t,\ &{ t \geq t_{ {}_{p,l} } },
\end{cases}
\\
\gamma_{ {}_{c} } (t) &:= \begin{cases}
c,\ &{ 0 \leq t \leq \frac{2 \left( c-a \right)}{u_a} },
\\
\gamma_{ {}_{a} } (t),\ &{ t \geq \frac{2 \left( c-a \right)}{u_a} },
\end{cases}
\\
\gamma_{ {}_{d} } (t) &:= \begin{cases}
d,\ &{ 0 \leq t \leq \frac{(d-b)^2}{2 u_b} },
\\
\gamma_{ {}_{b} } (t),\ &{ t \geq \frac{(d-b)^2}{2 u_b} }.
\end{cases}
\end{aligned}
\]
Here we have used the notation $t_{ {}_{p,l} }$ to denote the positive $t$ co-ordinate for the intersection of $x = p(t)$ and $x = l(t)$. These curves can be used to describe the explicit structure of $\left( u , R \right) = \lim_{\epsilon \rightarrow 0} \left( u^\epsilon , R^\epsilon \right)$ as follows:

\[
\begin{aligned}
u(x,t) &= \begin{cases}
u_a,\ &{
\begin{aligned}
&x \in \Big( -\infty , a \Big) \cup \Big( \big( ( a , b ) \setminus \{ c \} \big) \cap \big( -\infty , \tilde{l} (t) \big) \Big) 
\\
&\cup \Big( \big( ( b , \infty ) \setminus \{ d \} \big) \cap \big( ( -\infty , \min{ \{ p(t) , q(t) , r(t) \} } ) \cup ( p(t) , \min{ \{ l(t) , q(t) , r(t) \} } ) \big)  \Big),
\end{aligned} }
\\
\frac{x-b}{t},\ &{ x \in \Big( \big( ( b , \infty ) \setminus \{ d \} \big) \cap \big( \left( r(t) , p(t) \right) \cup \left( q(t) , \min{ \{ p(t) , r(t) \} } \right) \big) \Big) },
\\
0,\ &{ 
\begin{aligned}
x \in &\bigg( \Big( ( a , b ) \setminus \{ c \} \Big) \cap \Big( \big( \tilde{l} (t) , \infty \big) \setminus \{ r(t) \} \Big) \bigg) 
\\
&{
\begin{aligned}
&\cup \Bigg( \Big( ( b , \infty ) \setminus \{ d \} \Big) \cap \bigg( \big( \max{ \{ p(t) , r(t) \} , \infty } \big) 
\\
&\cup \big( \max{ \{ p(t) , q(t) \} } , r(t) \big) \cup \big( \max{ \{ l(t) , p(t) \} } , \min{ \{ q(t) , r(t) \} } \big) \bigg) \Bigg),
\end{aligned} }
\end{aligned} }
\end{cases}
\\
R(x,t) &= \begin{cases}
\rho_c \left( x-c-u_a t \right),\ &{ x \in \Big( -\infty , \gamma_{ {}_{a} } (t) \Big) },
\\
\rho_c \left( x-c \right),\ &{ x \in \Big( \gamma_{ {}_{a} } (t) , \gamma_{ {}_{c} } (t) \Big) },
\\
0,\ &{ x \in { \Big( \gamma_{ {}_{c} } (t) , \gamma_{ {}_{b} } (t) \Big) } \cup { \Big( \gamma_{ {}_{b} } (t) , \gamma_{ {}_{d} } (t) \Big) } },
\\
\rho_d,\ &{ x > \gamma_{ {}_{d} } (t) }.
\end{cases}
\end{aligned}
\]
For any test function $\phi \in C^{\infty}_{c} \left( \textbf{R}^1 \times \left[ 0 , \infty \right) ; \textbf{R}^1 \right)$, we observe that

\[
\begin{aligned}
\left\langle R_x , \phi \right\rangle = &- \left\langle R , \phi_x \right\rangle
\\
= &- \int_{0}^{\infty} \Bigg[ \int_{-\infty}^{\gamma_{ {}_{a} } (t)} \rho_c \left( x-c-u_a t \right) \phi_x\ dx + \int_{\gamma_{ {}_{a} } (t)}^{\gamma_{ {}_{c} } (t)} \rho_c \left( x-c \right) \phi_x\ dx + \int_{\gamma_{ {}_{d} } (t)}^{\infty}\ \rho_d\ \phi_x\ dx \Bigg] dt
\\
= &- \int_{0}^{\infty} \rho_c \Bigg[ - \int_{-\infty}^{\gamma_{ {}_{a} } (t)} \phi\ dx + \left( \gamma_{ {}_{a} } (t) - c - u_a t \right) \phi \left( \gamma_{ {}_{a} } (t) , t \right) 
\\
&- \int_{\gamma_{ {}_{a} } (t)}^{\gamma_{ {}_{c} } (t)} \phi\ dx + \left( \gamma_{ {}_{c} } (t) - c \right) \phi \left( \gamma_{ {}_{c} } (t) , t \right) - \left( \gamma_{ {}_{a} } (t) - c \right) \phi \left( \gamma_{ {}_{a} } (t) , t \right) \Bigg] dt 
\\
&+ \rho_d \int_{0}^{\infty} \phi \left( \gamma_{ {}_{d} } (t) , t \right) dt
\\
= &\left\langle \rho_c \left( \chi_{ {}_{ {}_{ \left( -\infty , \gamma_{ {}_{ c } } (t) \right) } } } - \left( \left( x-c \right) \delta_{ {}_{ x = \gamma_{ {}_{c} } (t) } } - u_a t\ \delta_{ {}_{ x = \gamma_{ {}_{a} } (t) } } \right) \right) + \rho_d\ \delta_{ {}_{ x = \gamma_{ {}_{d} } (t) } } , \phi \right\rangle.
\end{aligned}
\]
Therefore $\rho = \rho_c \left( \chi_{ {}_{ {}_{ \left( -\infty , \gamma_{ {}_{ c } } (t) \right) } } } - \left( \left( x-c \right) \delta_{ {}_{ x = \gamma_{ {}_{c} } (t) } } - u_a t\ \delta_{ {}_{ x = \gamma_{ {}_{a} } (t) } } \right) \right) + \rho_d\ \delta_{ {}_{ x = \gamma_{ {}_{d} } (t) } }$.
	
\textbf{Case 3.} $u_a > 0$, $u_b < 0$

\begin{enumerate}

	\item $x < a$
	
In this region, we have $x < a + u_a t$ and hence, utilizing the inequality

\[
(a+u_a t-x)^2 + (c-x)^2 - (a-x)^2 = (a+u_a t-x)^2 + 2 \left( c-a \right) \left| \frac{a+c}{2} - x \right| > 0,
\]
the required limit $\lim_{\epsilon \rightarrow 0}\ \left( u^\epsilon , R^\epsilon \right)$ equals

\[
\begin{aligned}
&\lim_{\epsilon \rightarrow 0} 
\begin{pmatrix}
\frac{
\begin{aligned}
u_a \left( \sqrt{\pi} - {\textnormal{erfc}} \left( P_\epsilon \right) \right) + \frac{(b-x) \left( e^{- \frac{| u_b |}{\epsilon}} - 1 \right)}{2t\ B_\epsilon\ e^{P_\epsilon^2 + B_\epsilon^2 - A_\epsilon^2 + \frac{u_b}{\epsilon}}}
\end{aligned}
}{
\begin{aligned}
\frac{f(A_\epsilon)}{A_\epsilon\ e^{P_\epsilon^2}} + \frac{f(B_\epsilon) \left( 1 - e^{- \frac{| u_b |}{\epsilon}} \right)}{B_\epsilon\ e^{P_\epsilon^2 + B_\epsilon^2 - A_\epsilon^2 + \frac{u_b}{\epsilon}}} + \left( \sqrt{\pi} - {\textnormal{erfc}} \left( P_\epsilon \right) \right)
\end{aligned}
}
\\
-------------------------------
\\
\frac{
\begin{aligned}
\rho_c \left[ \sqrt{2 t \epsilon} \left( \frac{1}{e^{P_\epsilon^2}} - \frac{1}{2\ e^{P_\epsilon^2 + C_\epsilon^2 - A_\epsilon^2}} \right) + (x-c-u_a t) \left( \sqrt{\pi} - {\textnormal{erfc}} \left( P_\epsilon \right) \right) \right.
\\
\left. + (x-c) \left( \frac{f(A_\epsilon)}{A_\epsilon\ e^{P_\epsilon^2}} - \frac{f(C_\epsilon)}{C_\epsilon\ e^{P_\epsilon^2 + C_\epsilon^2 - A_\epsilon^2}} \right) \right] + \rho_d\ \frac{\left( b-x \right) f(D_\epsilon)\ e^{B_\epsilon^2 - D_\epsilon^2}}{\left( d-x \right) B_\epsilon\ e^{P_\epsilon^2 + B_\epsilon^2 - A_\epsilon^2 + \frac{u_b}{\epsilon}}}
\end{aligned}
}{
\begin{aligned}
\frac{f(A_\epsilon)}{A_\epsilon\ e^{P_\epsilon^2}} + \frac{f(B_\epsilon) \left( 1 - e^{- \frac{| u_b |}{\epsilon}} \right)}{B_\epsilon\ e^{P_\epsilon^2 + B_\epsilon^2 - A_\epsilon^2 + \frac{u_b}{\epsilon}}} + \left( \sqrt{\pi} - {\textnormal{erfc}} \left( P_\epsilon \right) \right)
\end{aligned}
}
\end{pmatrix}^T
\\
= &\begin{cases}
\left( u_a , \rho_c \left( x-c-u_a t \right) \right),\ &{ x < b+u_a t - \sqrt{2 \left( u_a \left( b-a \right) - u_b \right) t} },
\\
\left( \frac{x-b}{t} , 0 \right),\ &{ x > b+u_a t - \sqrt{2 \left( u_a \left( b-a \right) - u_b \right) t} }.
\end{cases}
\end{aligned}
\]
For discussing the passage to the limit in the remaining regions, we introduce the curves
\begin{enumerate}

	\item[$(i)$] $\tilde{l} (s) := a + \frac{u_a}{2} \cdot s$,
	
	\item[$(ii)$] $r(s) := a + u_a s$,
	
	\item[$(iii)$] $p(s) := b - \sqrt{- 2 u_b s}$,
	
	\item[$(iv)$] $q(s) := b+u_a s-\sqrt{2 \left( u_a \left( b-a \right) - u_b \right) s}$

\end{enumerate}
defined for each $s \geq 0$. The limit $\lim_{\epsilon \rightarrow 0}\ \left( u^\epsilon , R^\epsilon \right)$ can then be evaluated as follows:

	\item $a < x < c$

\subsection*{Subregion 1. $x > a + u_a t$}

\[
\begin{aligned}
&\begin{pmatrix}
\frac{
\begin{aligned}
u_a\ \frac{f(P_\epsilon)}{P_\epsilon\ e^{A_\epsilon^2}} + \frac{(b-x) \left( e^{- \frac{| u_b |}{\epsilon}} - 1 \right)}{2t\ B_\epsilon\ e^{B_\epsilon^2 + \frac{u_b}{\epsilon}}}
\end{aligned}
}{
\begin{aligned}
\sqrt{\pi} - {\textnormal{erfc}} \left( A_\epsilon \right) + \frac{f(B_\epsilon) \left( 1 - e^{- \frac{| u_b |}{\epsilon}} \right)}{B_\epsilon\ e^{B_\epsilon^2 + \frac{u_b}{\epsilon}}} + \frac{f(P_\epsilon)}{P_\epsilon\ e^{A_\epsilon^2}}
\end{aligned}
}
\\
----------------------------
\\
\frac{
\begin{aligned}
\rho_c \bigg[ \sqrt{2 t \epsilon} \left( \frac{1}{e^{A_\epsilon^2}} - \frac{1}{2\ e^{C_\epsilon^2}} \right) + \left( x-c-u_a t \right) \frac{f(P_\epsilon)}{P_\epsilon\ e^{A_\epsilon^2}}
\\
+ \left( x-c \right) \left( \sqrt{\pi} - {\textnormal{erfc}} \left( A_\epsilon \right) - {\textnormal{erfc}} \left( C_\epsilon \right) \right) \bigg] + \rho_d\ \frac{\left( b-x \right) e^{B_\epsilon^2 - D_\epsilon^2}}{\left( d-x \right) B_\epsilon\ e^{B_\epsilon^2 + \frac{u_b}{\epsilon}}}
\end{aligned}
}{
\begin{aligned}
\sqrt{\pi} - {\textnormal{erfc}} \left( A_\epsilon \right) + \frac{f(B_\epsilon) \left( 1 - e^{- \frac{| u_b |}{\epsilon}} \right)}{B_\epsilon\ e^{B_\epsilon^2 + \frac{u_b}{\epsilon}}} + \frac{f(P_\epsilon)}{P_\epsilon\ e^{A_\epsilon^2}}
\end{aligned}
}
\end{pmatrix}^T
\\
= &\begin{cases}
\left( 0 , \rho_c \left( x-c \right) \right),\ &{ x \in { \Big( r(t) , p(t) \Big) } },
\\
\left( \frac{x-b}{t} , 0 \right),\ &{ x \in { \Big( \max{ \big\{ r(t) , p(t) \big\} } , \infty \Big) } }.
\end{cases}
\end{aligned}
\]

\subsection*{Subregion 2. $x < a + u_a t$}

\[
\begin{aligned}
&\lim_{\epsilon \rightarrow 0}
\begin{pmatrix}
\frac{
\begin{aligned}
u_a \left( \sqrt{\pi} - {\textnormal{erfc}} \left( P_\epsilon \right) \right) B_\epsilon\ e^{P_\epsilon^2 + B_\epsilon^2 - A_\epsilon^2 + \frac{u_b}{\epsilon}} + \frac{b-x}{2t} \left( e^{- \frac{| u_b |}{\epsilon}} - 1 \right)
\end{aligned}
}{
\begin{aligned}
\left( \sqrt{\pi} - {\textnormal{erfc}} \left( A_\epsilon \right) \right) B_\epsilon\ e^{B_\epsilon^2 + \frac{u_b}{\epsilon}} + f(B_\epsilon) \left( 1 - e^{- \frac{| u_b |}{\epsilon}} \right) 
\\
+ \left( \sqrt{\pi} - {\textnormal{erfc}} \left( P_\epsilon \right) \right) B_\epsilon\ e^{P_\epsilon^2 + B_\epsilon^2 - A_\epsilon^2 + \frac{u_b}{\epsilon}}
\end{aligned}
}
\\
----------------------------------
\\
\frac{
\begin{aligned}
\rho_c \bigg[ \sqrt{2 t \epsilon} \left( \frac{1}{e^{A_\epsilon^2}} - \frac{1}{2\ e^{C_\epsilon^2}} \right) + (x-c-u_a t) \left( \sqrt{\pi} - {\textnormal{erfc}} \left( P_\epsilon \right) \right) B_\epsilon\ e^{P_\epsilon^2 + B_\epsilon^2 - A_\epsilon^2 + \frac{u_b}{\epsilon}}
\\
+ \left( x-c \right) \left( \sqrt{\pi} - {\textnormal{erfc}} \left( A_\epsilon \right) - {\textnormal{erfc}} \left( C_\epsilon \right) \right) B_\epsilon\ e^{B_\epsilon^2 + \frac{u_b}{\epsilon}} \bigg] + \rho_d\ \frac{\left( b-x \right) f(D_\epsilon)}{d-x}\ e^{B_\epsilon^2 - D_\epsilon^2} 
\end{aligned}
}{
\begin{aligned}
\left( \sqrt{\pi} - {\textnormal{erfc}} \left( A_\epsilon \right) \right) B_\epsilon\ e^{B_\epsilon^2 + \frac{u_b}{\epsilon}} + f(B_\epsilon) \left( 1 - e^{- \frac{| u_b |}{\epsilon}} \right) 
\\
+ \left( \sqrt{\pi} - {\textnormal{erfc}} \left( P_\epsilon \right) \right) B_\epsilon\ e^{P_\epsilon^2 + B_\epsilon^2 - A_\epsilon^2 + \frac{u_b}{\epsilon}}
\end{aligned}
}
\end{pmatrix}^T
\\
= &\begin{cases}
\left( 0 , \rho_c \left( x-c \right) \right),\ &{ x \in { { \Big( -\infty , \min{ \big\{ r(t) , p(t) \big\} } \Big) } \cap { \Big( \big( \tilde{l} (t) , q(t) \big) \cup \big( q(t) , \infty \big) \Big) } } },
\\
\left( \frac{x-b}{t} , 0 \right),\ &{ x \in { \Big( \max{ \big\{ p(t) , q(t) \big\} } , r(t) \Big) } },
\\
\left( u_a , \rho_c \left( x-c-u_a t \right) \right),\ &{ x \in { \Big( -\infty , \min{ \big\{ r(t) , q(t) \big\} } \Big) } \cap { \Big( \big( -\infty , \min{ \big\{ \tilde{l} (t) , p(t) \big\} } \big) \cup \big( p(t) , \infty \big) \Big) } }.
\end{cases}
\end{aligned}
\]

	\item $c < x < b$


\subsection*{Subregion 1. $x > a + u_a t$}

\[
\begin{aligned}
&\lim_{\epsilon \rightarrow 0} 
\begin{pmatrix}
\frac{
\begin{aligned}
u_a\ \frac{f(P_\epsilon)}{P_\epsilon\ e^{A_\epsilon^2}} + \frac{(b-x) \left( e^{- \frac{| u_b |}{\epsilon}} - 1 \right)}{2t\ B_\epsilon\ e^{B_\epsilon^2 + \frac{u_b}{\epsilon}}}
\end{aligned}
}{
\begin{aligned}
\sqrt{\pi} - {\textnormal{erfc}} \left( A_\epsilon \right) + \frac{f(B_\epsilon) \left( 1 - e^{- \frac{| u_b |}{\epsilon}} \right)}{B_\epsilon\ e^{B_\epsilon^2 + \frac{u_b}{\epsilon}}} + \frac{f(P_\epsilon)}{P_\epsilon\ e^{A_\epsilon^2}}
\end{aligned}
}
\\
-------------------------
\\
\frac{
\begin{aligned}
\rho_c \Bigg[ \sqrt{2 t \epsilon} \left( \frac{1}{e^{A_\epsilon^2}} - \frac{1}{2\ e^{C_\epsilon^2}} \right) + \left( x-c-u_a t \right) \frac{f(P_\epsilon)}{P_\epsilon\ e^{A_\epsilon^2}}
\\
+ (x-c) \left( {\textnormal{erfc}} \left( C_\epsilon \right) - {\textnormal{erfc}} \left( A_\epsilon \right) \right) \Bigg] + \rho_d\ \frac{\left( b-x \right) e^{B_\epsilon^2 - D_\epsilon^2}}{\left( d-x \right) B_\epsilon\ e^{B_\epsilon^2 + \frac{u_b}{\epsilon}}}
\end{aligned}
}{
\begin{aligned}
\sqrt{\pi} - {\textnormal{erfc}} \left( A_\epsilon \right) + \frac{f(B_\epsilon) \left( 1 - e^{- \frac{| u_b |}{\epsilon}} \right)}{B_\epsilon\ e^{B_\epsilon^2 + \frac{u_b}{\epsilon}}} + \frac{f(P_\epsilon)}{P_\epsilon\ e^{A_\epsilon^2}}
\end{aligned}
}
\end{pmatrix}^T
\\
= &\begin{cases}
\left( 0 , 0 \right),\ &{ x \in { \Big( r(t) , p(t) \Big) } },
\\
\left( \frac{x-b}{t} , 0 \right),\ &{ x \in { \Big( \max{ \big\{ r(t) , p(t) \big\} } , \infty \Big) } }.
\end{cases}
\end{aligned}
\]

\subsection*{Subregion 2. $x < a + u_a t$}

\[
\begin{aligned}
&\lim_{\epsilon \rightarrow 0}
\begin{pmatrix}
\frac{
\begin{aligned}
u_a \left( \sqrt{\pi} - {\textnormal{erfc}} \left( P_\epsilon \right) \right) B_\epsilon\ e^{P_\epsilon^2 + B_\epsilon^2 - A_\epsilon^2 + \frac{u_b}{\epsilon}} + \frac{b-x}{2t} \left( e^{- \frac{| u_b |}{\epsilon}} - 1 \right)
\end{aligned}
}{
\begin{aligned}
\left( \sqrt{\pi} - {\textnormal{erfc}} \left( A_\epsilon \right) \right) B_\epsilon\ e^{B_\epsilon^2 + \frac{u_b}{\epsilon}} + f(B_\epsilon) \left( 1 - e^{- \frac{| u_b |}{\epsilon}} \right) 
\\
+ \left( \sqrt{\pi} - {\textnormal{erfc}} \left( P_\epsilon \right) \right) B_\epsilon\ e^{P_\epsilon^2 + B_\epsilon^2 - A_\epsilon^2 + \frac{u_b}{\epsilon}}
\end{aligned}
}
\\
----------------------------------
\\
\frac{
\begin{aligned}
\rho_c \Bigg[ \sqrt{2 t \epsilon} \left( \frac{1}{e^{A_\epsilon^2}} - \frac{1}{2\ e^{C_\epsilon^2}} \right) + (x-c-u_a t) \left( \sqrt{\pi} - {\textnormal{erfc}} \left( P_\epsilon \right) \right) B_\epsilon\ e^{P_\epsilon^2 + B_\epsilon^2 - A_\epsilon^2 + \frac{u_b}{\epsilon}}
\\
+ (x-c) \left( {\textnormal{erfc}} \left( C_\epsilon \right) - {\textnormal{erfc}} \left( A_\epsilon \right) \right) B_\epsilon\ e^{B_\epsilon^2 + \frac{u_b}{\epsilon}} \Bigg] + \rho_d\ \frac{\left( b-x \right) f(D_\epsilon)}{d-x}\ e^{B_\epsilon^2 - D_\epsilon^2} 
\end{aligned}
}{
\begin{aligned}
\left( \sqrt{\pi} - {\textnormal{erfc}} \left( A_\epsilon \right) \right) B_\epsilon\ e^{B_\epsilon^2 + \frac{u_b}{\epsilon}} + f(B_\epsilon) \left( 1 - e^{- \frac{| u_b |}{\epsilon}} \right) 
\\
+ \left( \sqrt{\pi} - {\textnormal{erfc}} \left( P_\epsilon \right) \right) B_\epsilon\ e^{P_\epsilon^2 + B_\epsilon^2 - A_\epsilon^2 + \frac{u_b}{\epsilon}}
\end{aligned}
}
\end{pmatrix}^T
\\
= &\begin{cases}
\left( 0 , 0 \right),\ &{ x \in { \Big( -\infty , \min{ \big\{ r(t) , p(t) \big\} } \Big) } \cap { \Big( \big( \tilde{l} (t) , q(t) \big) \cup \big( q(t) , \infty \big) \Big) } },
\\
\left( \frac{x-b}{t} , 0 \right),\ &{ x \in { \Big( \max{ \big\{ p(t) , q(t) \big\} } , r(t) \Big) } },
\\
\left( u_a , \rho_c \left( x-c-u_a t \right) \right),\ &{ x \in { \Big( -\infty , \min{ \big\{ r(t) , q(t) \big\} } \Big) } \cap { \Big( \big( -\infty , \min{ \big\{ \tilde{l} (t) , p(t) \big\} } \big) \cup \big( p(t) , \infty \big) \Big) } }.
\end{cases}
\end{aligned}
\]	
Before considering the remaining two regions, we introduce the curve $l : s \longmapsto a + \frac{u_b}{u_a} + \frac{u_a}{2} \cdot s$ defined over $\left[ 0 , \infty \right)$. Then we are able to study the behaviour of $\lim_{\epsilon \rightarrow 0} \left( u^\epsilon , R^\epsilon \right)$ in these regions as follows:

	\item $b < x < d$

\subsection*{Subregion 1.  $x > a + u_a t$}

\[
\begin{aligned}
&\lim_{\epsilon \rightarrow 0} 
\begin{pmatrix}
\frac{
\begin{aligned}
u_a\ \frac{f(P_\epsilon)}{P_\epsilon\ e^{A_\epsilon^2}}\ e^{- \frac{| u_b |}{\epsilon}} + \frac{(x-b) \left( e^{- \frac{| u_b |}{\epsilon}} - 1 \right)}{2t\ B_\epsilon\ e^{B_\epsilon^2}}
\end{aligned}
}{
\begin{aligned}
\sqrt{\pi} - {\textnormal{erfc}} \left( A_\epsilon \right) e^{- \frac{| u_b |}{\epsilon}} + {\textnormal{erfc}} \left( B_\epsilon \right) \left( e^{- \frac{| u_b |}{\epsilon}} - 1 \right) + \frac{f(P_\epsilon)}{P_\epsilon\ e^{A_\epsilon^2}}\ e^{- \frac{| u_b |}{\epsilon}}
\end{aligned}
}
\\
----------------------------
\\
\frac{
\begin{aligned}
\rho_c \bigg[ \sqrt{2 t \epsilon} \left( \frac{1}{e^{A_\epsilon^2}} - \frac{1}{2\ e^{C_\epsilon^2}} \right) + \left( x-c-u_a t \right) \frac{f(P_\epsilon)}{P_\epsilon\ e^{A_\epsilon^2}}\ e^{- \frac{| u_b |}{\epsilon}}
\\
+ \left( x-c \right) \left( {\textnormal{erfc}} \left( C_\epsilon \right) - {\textnormal{erfc}} \left( A_\epsilon \right) \right) e^{- \frac{| u_b |}{\epsilon}} \bigg] + \rho_d\ {\textnormal{erfc}} \left( D_\epsilon \right)
\end{aligned}
}{
\begin{aligned}
\sqrt{\pi} - {\textnormal{erfc}} \left( A_\epsilon \right) e^{- \frac{| u_b |}{\epsilon}} + {\textnormal{erfc}} \left( B_\epsilon \right) \left( e^{- \frac{| u_b |}{\epsilon}} - 1 \right) + \frac{f(P_\epsilon)}{P_\epsilon\ e^{A_\epsilon^2}}\ e^{- \frac{| u_b |}{\epsilon}}
\end{aligned}
}
\end{pmatrix}^T
= &\left( 0 , 0 \right).
\end{aligned}
\]

\subsection*{Subregion 2. $x < a + u_a t$}

\[
\begin{aligned}
&\lim_{\epsilon \rightarrow 0} 
\begin{pmatrix}
\frac{
\begin{aligned}
u_a \left( \sqrt{\pi} - {\textnormal{erfc}} \left( P_\epsilon \right) \right) e^{P_\epsilon^2 - A_\epsilon^2 + \frac{u_b}{\epsilon}} + \frac{(x-b) \left( e^{- \frac{| u_b |}{\epsilon}} - 1 \right)}{2t\ B_\epsilon\ e^{B_\epsilon^2}}
\end{aligned}
}{
\begin{aligned}
\sqrt{\pi} - {\textnormal{erfc}} \left( A_\epsilon \right) e^{- \frac{| u_b |}{\epsilon}} + {\textnormal{erfc}} \left( B_\epsilon \right) \left( e^{- \frac{| u_b |}{\epsilon}} - 1 \right) + \left( \sqrt{\pi} - {\textnormal{erfc}} \left( P_\epsilon \right) \right) e^{P_\epsilon^2 - A_\epsilon^2 + \frac{u_b}{\epsilon}}
\end{aligned}
}
\\
----------------------------------
\\
\frac{
\begin{aligned}
\rho_c \bigg[ \sqrt{2 t \epsilon} \left( \frac{1}{e^{A_\epsilon^2}} - \frac{1}{2\ e^{C_\epsilon^2}} \right) e^{- \frac{| u_b |}{\epsilon}} + (x-c-u_a t) \left( \sqrt{\pi} - {\textnormal{erfc}} \left( P_\epsilon \right) \right) e^{P_\epsilon^2 - A_\epsilon^2 + \frac{u_b}{\epsilon}}
\\
+ \left( x-c \right) \left( {\textnormal{erfc}} \left( C_\epsilon \right) - {\textnormal{erfc}} \left( A_\epsilon \right) \right) e^{- \frac{| u_b |}{\epsilon}} \bigg] + \rho_d\ {\textnormal{erfc}} \left( D_\epsilon \right)
\end{aligned}
}{
\begin{aligned}
\sqrt{\pi} - {\textnormal{erfc}} \left( A_\epsilon \right) e^{- \frac{| u_b |}{\epsilon}} + {\textnormal{erfc}} \left( B_\epsilon \right) \left( e^{- \frac{| u_b |}{\epsilon}} - 1 \right) + \left( \sqrt{\pi} - {\textnormal{erfc}} \left( P_\epsilon \right) \right) e^{P_\epsilon^2 - A_\epsilon^2 + \frac{u_b}{\epsilon}}
\end{aligned}
}
\end{pmatrix}^T
\\
= &\begin{cases}
\left( u_a , \rho_c \left( x-c-u_a t \right) \right),\ &{ x \in \Big( -\infty , \min{ \big\{ r(t) , l(t) \big\} } \Big) },
\\
\left( 0 , 0 \right),\ &{ x \in { { \Big( l(t) , r(t) \Big) } } }.
\end{cases}
\end{aligned}
\]

	\item $x > d$

\subsection*{Subregion 1. $x > a + u_a t$}

\[
\begin{aligned}
&\lim_{\epsilon \rightarrow 0} \begin{pmatrix}
\frac{
\begin{aligned}
u_a\ \frac{f(P_\epsilon)}{P_\epsilon\ e^{A_\epsilon^2}}\ e^{- \frac{| u_b |}{\epsilon}} + \frac{(x-b) \left( e^{- \frac{| u_b |}{\epsilon}} - 1 \right)}{2t\ B_\epsilon\ e^{B_\epsilon^2}}
\end{aligned}
}{
\begin{aligned}
\sqrt{\pi} - {\textnormal{erfc}} \left( A_\epsilon \right) e^{- \frac{| u_b |}{\epsilon}} + {\textnormal{erfc}} \left( B_\epsilon \right) \left( e^{- \frac{| u_b |}{\epsilon}} - 1 \right) + \frac{f(P_\epsilon)}{P_\epsilon\ e^{A_\epsilon^2}}\ e^{- \frac{| u_b |}{\epsilon}}
\end{aligned}
}
\\
----------------------------
\\
\frac{
\begin{aligned}
\rho_c \bigg[ \sqrt{2 t \epsilon} \left( \frac{1}{e^{A_\epsilon^2}} - \frac{1}{2\ e^{C_\epsilon^2}} \right) + \left( x-c-u_a t \right) \frac{f(P_\epsilon)}{P_\epsilon\ e^{A_\epsilon^2}}\ e^{- \frac{| u_b |}{\epsilon}}
\\
+ \left( x-c \right) \left( {\textnormal{erfc}} \left( C_\epsilon \right) - {\textnormal{erfc}} \left( A_\epsilon \right) \right) e^{- \frac{| u_b |}{\epsilon}} \bigg] + \rho_d \left( \sqrt{\pi} - {\textnormal{erfc}} \left( D_\epsilon \right) \right)
\end{aligned}
}{
\begin{aligned}
\sqrt{\pi} - {\textnormal{erfc}} \left( A_\epsilon \right) e^{- \frac{| u_b |}{\epsilon}} + {\textnormal{erfc}} \left( B_\epsilon \right) \left( e^{- \frac{| u_b |}{\epsilon}} - 1 \right) + \frac{f(P_\epsilon)}{P_\epsilon\ e^{A_\epsilon^2}}\ e^{- \frac{| u_b |}{\epsilon}}
\end{aligned}
}
\end{pmatrix}^T
= &\left( 0 , \rho_d \right).
\end{aligned}
\]

\subsection*{Subregion 2. $x < a + u_a t$}

\[
\begin{aligned}
&\lim_{\epsilon \rightarrow 0} 
\begin{pmatrix}
\frac{
\begin{aligned}
u_a \left( \sqrt{\pi} - {\textnormal{erfc}} \left( P_\epsilon \right) \right) e^{P_\epsilon^2 - A_\epsilon^2 + \frac{u_b}{\epsilon}} + \frac{(x-b) \left( e^{- \frac{| u_b |}{\epsilon}} - 1 \right)}{2t\ B_\epsilon\ e^{B_\epsilon^2}}
\end{aligned}
}{
\begin{aligned}
\sqrt{\pi} - {\textnormal{erfc}} \left( A_\epsilon \right) e^{- \frac{| u_b |}{\epsilon}} + {\textnormal{erfc}} \left( B_\epsilon \right) \left( e^{- \frac{| u_b |}{\epsilon}} - 1 \right) + \left( \sqrt{\pi} - {\textnormal{erfc}} \left( P_\epsilon \right) \right) e^{P_\epsilon^2 - A_\epsilon^2 + \frac{u_b}{\epsilon}}
\end{aligned}
}
\\
----------------------------------
\\
\frac{
\begin{aligned}
\rho_c \bigg[ \sqrt{2 t \epsilon} \left( \frac{1}{e^{A_\epsilon^2}} - \frac{1}{2\ e^{C_\epsilon^2}} \right) e^{- \frac{| u_b |}{\epsilon}} + (x-c-u_a t) \left( \sqrt{\pi} - {\textnormal{erfc}} \left( P_\epsilon \right) \right) e^{P_\epsilon^2 - A_\epsilon^2 + \frac{u_b}{\epsilon}}
\\
+ \left( x-c \right) \left( {\textnormal{erfc}} \left( C_\epsilon \right) - {\textnormal{erfc}} \left( A_\epsilon \right) \right) e^{- \frac{| u_b |}{\epsilon}} \bigg] + \rho_d \left( \sqrt{\pi} - {\textnormal{erfc}} \left( D_\epsilon \right) \right)
\end{aligned}
}{
\begin{aligned}
\sqrt{\pi} - {\textnormal{erfc}} \left( A_\epsilon \right) e^{- \frac{| u_b |}{\epsilon}} + {\textnormal{erfc}} \left( B_\epsilon \right) \left( e^{- \frac{| u_b |}{\epsilon}} - 1 \right) + \left( \sqrt{\pi} - {\textnormal{erfc}} \left( P_\epsilon \right) \right) e^{P_\epsilon^2 - A_\epsilon^2 + \frac{u_b}{\epsilon}}
\end{aligned}
}
\end{pmatrix}^T
\\
= &\begin{cases}
\left( u_a , \rho_c \left( x-c-u_a t \right) \right),\ &{ x \in \Big( -\infty , \min{ \big\{ r(t) , l(t) \big\} } \Big) },
\\
\left( 0 , \rho_d \right),\ &{ x \in { { \Big( l(t) , r(t) \Big) } } }.
\end{cases}
\end{aligned}
\]

\end{enumerate}

To recover $\rho$, set $R = \lim_{\epsilon \rightarrow 0} R^\epsilon$. There are two cases to consider, namely

\begin{itemize}

	\item $| u_b | < u_a \left( b-a \right)$

	\item $| u_b | \geq u_a \left( b-a \right)$

\end{itemize}
We provide the computations only for the case $| u_b | < u_a \left( b-a \right)$. For further simplification, let us restrict ourselves to the case $c > x_{ {}_{p,r} }$, where $x_{ {}_{p,r} } = a + u_a \cdot t_{ {}_{p,r} }$ and $\left( x_{ {}_{p,r} } , t_{ {}_{p,r} } \right)$ denotes the point of intersection of $x=p(t)$ with $x=r(t)$ in the upper-half plane. We will also be using other $x$ and $t$ subscripts in reference to the curves defined under this case along with the curves $x=c$ and $x=d$. For example, $\left( x_{ {}_{p,c} } , t_{ {}_{p,c} } \right)$ will denote the point of intersection of $x = p(t)$ and $x=c$ in the upper-half plane.

The restriction imposed above will ensure that $t_{ {}_{p,c} } < t_{ {}_{p,q} }$. In addition to the curves $x = l(t)$, $x = \tilde{l} (t)$, $x = p(t)$ and $x = q(t)$, let us now introduce the curves

\[
\begin{aligned}
\gamma_{ {}_{a} } (t) &:= \begin{cases}
a + \frac{u_a}{2} \cdot t,\ &{ 0 \leq t \leq t_{ {}_{q,\tilde{l}} } := \left( \frac{\sqrt{2 \left( 2 u_a \left( b-a \right) - u_b \right)} - \sqrt{- 2 u_b}}{2 u_a} \right)^2 },
\\
b + u_a t - \sqrt{2 \left( u_a \left( b-a \right) - u_b \right) t},\ &{ t_{ {}_{q,\tilde{l}} } \leq t \leq t_{ {}_{q,l} } := \frac{2 \left( u_a \left( b-a \right) - u_b \right)}{u_a^2} },
\\
a + \frac{u_b}{u_a} + \frac{u_a}{2} \cdot t,\ &{ t \geq t_{ {}_{q,l} } },
\end{cases}
\\
\gamma_{ {}_{b} } (t) &:= \begin{cases}
b - \sqrt{- 2 u_b t},\ &{ 0 \leq t \leq t_{ {}_{p,q} } := \left( \frac{\sqrt{2 \left( u_a \left( b-a \right) - u_b \right)} - \sqrt{- 2 u_b}}{u_a} \right)^2 },
\\
\gamma_{ {}_{a} } (t),\ &{ t \geq t_{ {}_{p,q} } },
\end{cases}
\\
\gamma_{ {}_{c} } (t) &:= \begin{cases}
c,\ &{ 0 \leq t \leq t_{ {}_{p,c} } := \frac{\left( b-c \right)^2}{2 | u_b |} },
\\
\gamma_{ {}_{b} } (t),\ &{ t \geq t_{ {}_{p,c} } },
\end{cases}
\\
\gamma_{ {}_{d} } (t) &:= \begin{cases}
d,\ &{ 0 \leq t \leq t_{ {}_{l,d} } := \frac{2 \left( d-a + \frac{| u_b |}{u_a} \right)}{u_a} },
\\
l(t),\ &{ t \geq t_{ {}_{l,d} } }.
\end{cases}
\end{aligned}
\]
Using these newly introduced curves, we may describe $\left( u , R \right) = \lim_{\epsilon \rightarrow 0} \left( u^\epsilon , R^\epsilon \right)$ as follows:
\[
\begin{aligned}
u(x,t) &= \begin{cases}
u_a,\ &{ 
\begin{aligned}
x \in &\Big( \big( -\infty , a \big) \cap \big( -\infty , q(t) \big) \Big) 
\\
&\cup \bigg( \Big( ( a , b ) \setminus \{ c \} \Big) \cap \Big( -\infty , \min{ \{ q(t) , r(t) \} } \Big) \cap \Big( \big( -\infty , \min{ \{ \tilde{l} (t) , p(t) \} } \big) \cup \big( p(t) , \infty \big) \Big) \bigg)
\\
&\cup \bigg( \Big( ( b , \infty ) \setminus \{ d \} \Big) \cap \Big( -\infty , \min{ \{ l(t) , r(t) \} } \Big) \bigg),
\end{aligned} }
\\
\frac{x-b}{t},\ &{ 
\begin{aligned}
x \in &\Big( \big( -\infty , a \big) \cap \big( q(t) , \infty \big) \Big) 
\\
&\cup \bigg( \Big( ( a , b ) \setminus \{ c \} \Big) \cap \Big( \big( \max{ \{ p(t) , q(t) \} } , r(t) \big) \cup \big( \max{ \{ p(t) , r(t) \} } , \infty \big) \Big) \bigg),
\end{aligned} }
\\
0,\ &{ 
\begin{aligned}
x \in &
\Bigg( 
\Big( ( a , b ) \setminus \{ c \} \Big) 
\cap 
\bigg( 
\Big( r(t) , p(t) \Big) \cup \bigg( \Big( -\infty , \min{ \big\{ p(t) , r(t) \big\} } \Big) \cap \Big( \big( l(t) , q(t) \big) \cup \big( q(t) , \infty \big) \Big) \bigg)
\bigg) 
\Bigg)  
\\
&\cup \bigg( \Big( ( b , \infty ) \setminus \{ d \} \Big) \cap \Big( ( l(t) , r(t) ) \cup ( r(t) , \infty ) \Big) \bigg),
\end{aligned} }
\end{cases}
\\
R(x,t) &= \begin{cases}
\rho_c \left( x-c-u_a t \right),\ &{ x \in \Big( -\infty , \gamma_{ {}_{a} } (t) \Big) },
\\
\rho_c \left( x-c \right),\ &{ x \in \Big( \gamma_{ {}_{a} } (t) , \gamma_{ {}_{c} } (t) \Big) },
\\
0,\ &{ x \in { \Big( \gamma_{ {}_{c} } (t) , \gamma_{ {}_{b} } (t) \Big) } \cup { \Big( \gamma_{ {}_{b} } (t) , \gamma_{ {}_{d} } (t) \Big) } },
\\
\rho_d,\ &{ x > \gamma_{ {}_{d} } (t) }.
\end{cases}
\end{aligned}
\]
We conclude our discussion for this case with the observation that any $\phi \in C^{\infty}_{c} \left( \textbf{R}^1 \times \left[ 0 , \infty \right) ; \textbf{R}^1 \right)$ satisfies

\begin{small}
\[
\begin{aligned}
\left\langle R_x , \phi \right\rangle = &- \left\langle R , \phi_x \right\rangle
\\
= &- \int_{0}^{\infty} \Bigg[ \int_{-\infty}^{\gamma_{ {}_{a} } (t)} \rho_c \left( x-c-u_a t \right) \phi_x\ dx + \int_{\gamma_{ {}_{a} } (t)}^{\gamma_{ {}_{c} } (t)} \rho_c \left( x-c \right) \phi_x\ dx + \int_{\gamma_{ {}_{d} } (t)}^{\infty}\ \rho_d\ \phi_x\ dx \Bigg] dt
\\
= &- \int_{0}^{\infty} \rho_c \Bigg[ - \int_{-\infty}^{\gamma_{ {}_{a} } (t)} \phi\ dx + \left( \gamma_{ {}_{a} } (t) - c - u_a t \right) \phi \left( \gamma_{ {}_{a} } (t) , t \right) 
\\
&- \int_{\gamma_{ {}_{a} } (t)}^{\gamma_{ {}_{c} } (t)} \phi\ dx + \left( \gamma_{ {}_{c} } (t) - c \right) \phi \left( \gamma_{ {}_{c} } (t) , t \right) - \left( \gamma_{ {}_{a} } (t) - c \right) \phi \left( \gamma_{ {}_{a} } (t) , t \right) \Bigg] dt 
\\
&+ \rho_d \int_{0}^{\infty} \phi \left( \gamma_{ {}_{d} } (t) , t \right) dt
\\
= &\left\langle \rho_c \left( \chi_{ {}_{ {}_{ \left( -\infty , \gamma_{ {}_{ c } } (t) \right) } } } - \left( \left( x-c \right) \delta_{ {}_{ x = \gamma_{ {}_{c} } (t) } } - u_a t\ \delta_{ {}_{ x = \gamma_{ {}_{a} } (t) } } \right) \right) + \rho_d\ \delta_{ {}_{ x = \gamma_{ {}_{d} } (t) } } , \phi \right\rangle,
\end{aligned}
\]
\end{small}
which implies that the density component $\rho$ is given by

\[
\rho = \rho_c \left( \chi_{ {}_{ {}_{ \left( -\infty , \gamma_{ {}_{ c } } (t) \right) } } } - \left( \left( x-c \right) \delta_{ {}_{ x = \gamma_{ {}_{c} } (t) } } - u_a t\ \delta_{ {}_{ x = \gamma_{ {}_{a} } (t) } } \right) \right) + \rho_d\ \delta_{ {}_{ x = \gamma_{ {}_{d} } (t) } }.
\]
	
\textbf{Case 4.} $u_a < 0$, $u_b < 0$

\begin{enumerate}

	\item $x < a$
	
In this region, let us first introduce the curves
\begin{enumerate}

	\item[$(i)$] $l(s) := \frac{a+b}{2} + \frac{u_b}{b-a} \cdot s$,
	
	\item[$(ii)$] $r(s) := a + u_a s$,
	
	\item[$(iii)$] $p(s) := b - \sqrt{- 2 u_b s}$,
	
	\item[$(iv)$] $q(s) := b+u_a s-\sqrt{2 \left( u_a \left( b-a \right) - u_b \right) s}$ (defined if $u_a \left( b-a \right) > u_b$)

\end{enumerate}
defined for each $s \geq 0$. The subsequent evaluations of the limit $\lim_{\epsilon \rightarrow 0}\ \left( u^\epsilon , R^\epsilon \right)$ in the subregions $x > a + u_a t$ and $x < a + u_a t$ are shown separately as follows:	
	
\subsection*{Subregion 1. $x > a + u_a t$}

\begin{footnotesize}
\[
\begin{aligned}
&\lim_{\epsilon \rightarrow 0} 
\begin{pmatrix}
\frac{
\begin{aligned}
f(P_\epsilon) \cdot u_a + \frac{(x-a-u_a t) \left( e^{- \frac{| u_b |}{\epsilon}} - 1 \right)}{2t\ e^{B_\epsilon^2 - A_\epsilon^2 + \frac{u_b}{\epsilon}}}
\end{aligned}
}{
\begin{aligned}
\frac{x-a - u_a t}{a-x}\ f(A_\epsilon) + \frac{\left( x-a-u_a t \right) \left( 1 - e^{- \frac{| u_b |}{\epsilon}} \right)}{\left( b-x \right) e^{B_\epsilon^2 - A_\epsilon^2 + \frac{u_b}{\epsilon}}}\ f(B_\epsilon) + f(P_\epsilon)
\end{aligned}
}
\\
----------------------------
\\
\frac{
\begin{aligned}
\rho_c \bigg[ (x-a-u_a t) \left( 1 - \frac{e^{A_\epsilon^2 - C_\epsilon^2}}{2} \right) + \left( x-c-u_a t \right) f(P_\epsilon) 
\\
+ \left( x-c \right) \left( \frac{x-a-u_a t}{a-x}\ f(A_\epsilon) - \frac{x-a-u_a t}{c-x}\ f(C_\epsilon)\ e^{A_\epsilon^2 - C_\epsilon^2} \right) \bigg] 
\\
+\ \rho_d\ \frac{\left( x-a-u_a t \right) e^{B_\epsilon^2 - D_\epsilon^2}}{\left( d-x \right) e^{B_\epsilon^2 - A_\epsilon^2 + \frac{u_b}{\epsilon}}}\ f(D_\epsilon)
\end{aligned}
}{
\begin{aligned}
\frac{x-a - u_a t}{a-x}\ f(A_\epsilon) + \frac{\left( x-a-u_a t \right) \left( 1 - e^{- \frac{| u_b |}{\epsilon}} \right)}{\left( b-x \right) e^{B_\epsilon^2 - A_\epsilon^2 + \frac{u_b}{\epsilon}}}\ f(B_\epsilon) + f(P_\epsilon)
\end{aligned}
}
\end{pmatrix}^T
\\
= &\begin{cases}
\left( \frac{x-a}{t} , \rho_c\ \frac{2 \left( x-a-u_a t \right) \left( x-a \right) + \left( a-c \right) u_a t}{u_a t} \right),\ &{ x \in \Big( r(t) , l(t) \Big) },
\\
\left( \frac{x-b}{t} , 0 \right),\ &{ x \in \Big( \max{ \big\{ r(t) , l(t) \big\} , \infty \Big) } }.
\end{cases}
\end{aligned}
\]
\end{footnotesize}

\subsection*{Subregion  2. $x < a + u_a t$}

\begin{footnotesize}
\[
\begin{aligned}
&\lim_{\epsilon \rightarrow 0} 
\begin{pmatrix}
\frac{
\begin{aligned}
u_a \left( \sqrt{\pi} - {\textnormal{erfc}} \left( P_\epsilon \right) \right) + \frac{b-x}{2t}\ \frac{e^{- \frac{| u_b |}{\epsilon}} - 1}{2t\ B_\epsilon\ e^{P_\epsilon^2 + B_\epsilon^2 - A_\epsilon^2 + \frac{u_b}{\epsilon}}}
\end{aligned}
}{
\begin{aligned}
\frac{f(A_\epsilon)}{A_\epsilon\ e^{P_\epsilon^2}} + \frac{f(B_\epsilon) \left( 1 - e^{- \frac{| u_b |}{\epsilon}} \right)}{B_\epsilon\ e^{P_\epsilon^2 + B_\epsilon^2 - A_\epsilon^2 + \frac{u_b}{\epsilon}}} + \left( \sqrt{\pi} - {\textnormal{erfc}} \left( P_\epsilon \right) \right)
\end{aligned}
}
\\
-------------------------------
\\
\frac{
\begin{aligned}
\rho_c \bigg[ \sqrt{2 t \epsilon} \left( 1 - \frac{1}{2\ e^{C_\epsilon^2 - A_\epsilon^2}} \right) e^{- P_\epsilon^2} + (x-c-u_a t) \left( \sqrt{\pi} - {\textnormal{erfc}} \left( P_\epsilon \right) \right) \bigg] 
\\
+ \left( x-c \right) \left( \frac{f(A_\epsilon)}{A_\epsilon} - \frac{f(C_\epsilon)}{C_\epsilon\ e^{C_\epsilon^2 - A_\epsilon^2}} \right) e^{- P_\epsilon^2} + \rho_d\ \frac{\left( b-x \right) f(D_\epsilon)\ e^{B_\epsilon^2 - D_\epsilon^2}}{\left( d-x \right) B_\epsilon\ e^{P_\epsilon^2 + B_\epsilon^2 - A_\epsilon^2 + \frac{u_b}{\epsilon}}}
\end{aligned}
}{
\begin{aligned}
\frac{f(A_\epsilon)}{A_\epsilon\ e^{P_\epsilon^2}} + \frac{f(B_\epsilon) \left( 1 - e^{- \frac{| u_b |}{\epsilon}} \right)}{B_\epsilon\ e^{P_\epsilon^2 + B_\epsilon^2 - A_\epsilon^2 + \frac{u_b}{\epsilon}}} + \left( \sqrt{\pi} - {\textnormal{erfc}} \left( P_\epsilon \right) \right)
\end{aligned}
}
\end{pmatrix}^T
\\
= &\begin{cases}
\left( u_a , \rho_c \left( x-c-u_a t \right) \right),\ &{ { u_a \left( b-a \right) > u_b },\ { x \in \Big( -\infty , \min{ \big\{ q(t) , r(t) \big\} } \Big) } },
\\
\left( \frac{x-b}{t} , 0 \right),\ &{ { u_a \left( b-a \right) > u_b },\ { x \in \Big( q(t) , r(t) \Big) } },
\\
\left( u_a , \rho_c \left( x-c-u_a t \right) \right),\ &{ u_a \left( b-a \right) \leq u_b,\ x \in \Big( -\infty , r(t) \Big) }.
\end{cases}
\end{aligned}
\]
\end{footnotesize}
	
In the remaining regions, we have $x > a + u_a t$ because of the restriction $u_a < 0$. Therefore the required limit $\lim_{\epsilon \rightarrow 0} \left( u^\epsilon , R^\epsilon \right)$	 will be evaluated as follows:

	\item $a < x < c$
	
\begin{footnotesize}
\[
\begin{aligned}
&\lim_{\epsilon \rightarrow 0} 
\begin{pmatrix}
\frac{
\begin{aligned}
u_a\ \frac{f(P_\epsilon)}{P_\epsilon\ e^{A_\epsilon^2}} + \frac{(b-x) \left( e^{- \frac{| u_b |}{\epsilon}} - 1 \right)}{2t\ B_\epsilon\ e^{B_\epsilon^2 + \frac{u_b}{\epsilon}}}
\end{aligned}
}{
\begin{aligned}
\left( \sqrt{\pi} - {\textnormal{erfc}} \left( A_\epsilon \right) \right) + \frac{f(B_\epsilon) \left( 1 - e^{- \frac{| u_b |}{\epsilon}} \right)}{B_\epsilon\ e^{B_\epsilon^2 + \frac{u_b}{\epsilon}}} + \frac{f(P_\epsilon)}{P_\epsilon\ e^{A_\epsilon^2}}
\end{aligned}
}
\\
-----------------------------
\\
\frac{
\begin{aligned}
\rho_c \bigg[ \sqrt{2 t \epsilon} \left( \frac{1}{e^{A_\epsilon^2}} - \frac{1}{2\ e^{C_\epsilon^2}} \right) + \frac{\left( x-c-u_a t \right) f(P_\epsilon)}{P_\epsilon\ e^{A_\epsilon^2}} \bigg]
\\
+ \left( x-c \right) \left( \sqrt{\pi} - {\textnormal{erfc}} \left( A_\epsilon \right) - {\textnormal{erfc}} \left( C_\epsilon \right) \right) + \rho_d\ \frac{f(D_\epsilon) \left( b-x \right) e^{B_\epsilon^2 - D_\epsilon^2}}{\left( d-x \right) B_\epsilon\ e^{B_\epsilon^2 + \frac{u_b}{\epsilon}}}
\end{aligned}
}{
\begin{aligned}
\left( \sqrt{\pi} - {\textnormal{erfc}} \left( A_\epsilon \right) \right) + \frac{f(B_\epsilon) \left( 1 - e^{- \frac{| u_b |}{\epsilon}} \right)}{B_\epsilon\ e^{B_\epsilon^2 + \frac{u_b}{\epsilon}}} + \frac{f(P_\epsilon)}{P_\epsilon\ e^{A_\epsilon^2}}
\end{aligned}
}
\end{pmatrix}^T
\\
= &\begin{cases}
\left( 0 , \rho_c \left( x-c \right) \right),\ &{ x < b - \sqrt{- 2u_b t} },
\\
\left( \frac{x-b}{t} , 0 \right),\ &{ x > b - \sqrt{- 2 u_b t} }.
\end{cases}
\end{aligned}
\]
\end{footnotesize}

	\item $c < x < b$
	
\begin{footnotesize}
	\[
\begin{aligned}
&\lim_{\epsilon \rightarrow 0} 
\begin{pmatrix}
\frac{
\begin{aligned}
u_a\ \frac{f(P_\epsilon)}{P_\epsilon\ e^{A_\epsilon^2}} + \frac{(b-x) \left( e^{- \frac{| u_b |}{\epsilon}} - 1 \right)}{2t\ B_\epsilon\ e^{B_\epsilon^2 + \frac{u_b}{\epsilon}}}
\end{aligned}
}{
\begin{aligned}
\left( \sqrt{\pi} - {\textnormal{erfc}} \left( A_\epsilon \right) \right) + \frac{f(B_\epsilon) \left( 1 - e^{- \frac{| u_b |}{\epsilon}} \right)}{B_\epsilon\ e^{B_\epsilon^2 + \frac{u_b}{\epsilon}}} + \frac{f(P_\epsilon)}{P_\epsilon\ e^{A_\epsilon^2}}
\end{aligned}
}
\\
--------------------------
\\
\frac{
\begin{aligned}
\rho_c \bigg[ \sqrt{2 t \epsilon} \left( \frac{1}{e^{A_\epsilon^2}} - \frac{1}{2\ e^{C_\epsilon^2}} \right) + \frac{\left( x-c-u_a t \right) f(P_\epsilon)}{P_\epsilon\ e^{A_\epsilon^2}} \bigg]
\\
+ \left( x-c \right) \left( {\textnormal{erfc}} \left( C_\epsilon \right) - {\textnormal{erfc}} \left( A_\epsilon \right) \right) + \rho_d\ \frac{f(D_\epsilon) \left( b-x \right) e^{B_\epsilon^2 - D_\epsilon^2}}{\left( d-x \right) B_\epsilon\ e^{B_\epsilon^2 + \frac{u_b}{\epsilon}}}
\end{aligned}
}{
\begin{aligned}
\left( \sqrt{\pi} - {\textnormal{erfc}} \left( A_\epsilon \right) \right) + \frac{f(B_\epsilon) \left( 1 - e^{- \frac{| u_b |}{\epsilon}} \right)}{B_\epsilon\ e^{B_\epsilon^2 + \frac{u_b}{\epsilon}}} + \frac{f(P_\epsilon)}{P_\epsilon\ e^{A_\epsilon^2}}
\end{aligned}
}
\end{pmatrix}^T
\\
= &\begin{cases}
\left( 0 , 0 \right),\ &{ x < b - \sqrt{- 2u_b t} },
\\
\left( \frac{x-b}{t} , 0 \right),\ &{ x > b - \sqrt{- 2 u_b t} }.
\end{cases}
\end{aligned}
\]
	\end{footnotesize}	

	\item $b < x < d$
	
\begin{footnotesize}
	\[
\begin{aligned}
&\lim_{\epsilon \rightarrow 0} 
\begin{pmatrix}
\frac{
\begin{aligned}
u_a\ \frac{f(P_\epsilon)}{P_\epsilon\ e^{A_\epsilon^2 + \frac{| u_b |}{\epsilon}}} + \frac{(x-b) \left( e^{- \frac{| u_b |}{\epsilon}} - 1 \right)}{2t\ B_\epsilon\ e^{B_\epsilon^2}}
\end{aligned}
}{
\begin{aligned}
\sqrt{\pi} - {\textnormal{erfc}} \left( A_\epsilon \right) e^{- \frac{| u_b |}{\epsilon}} + {\textnormal{erfc}} \left( B_\epsilon \right) \left( e^{- \frac{| u_b |}{\epsilon}} - 1 \right) + \frac{f(P_\epsilon)}{P_\epsilon\ e^{A_\epsilon^2 + \frac{| u_b |}{\epsilon}}}
\end{aligned}
}
\\
----------------------------
\\
\frac{
\begin{aligned}
\rho_c\ \bigg[ \sqrt{2 t \epsilon} \left( \frac{1}{e^{A_\epsilon^2 + \frac{| u_b |}{\epsilon}}} - \frac{1}{2\ e^{C_\epsilon^2 + \frac{| u_b |}{\epsilon}}} \right) + \left( x-c-u_a t \right) \frac{f(P_\epsilon)}{P_\epsilon\ e^{A_\epsilon^2 + \frac{| u_b |}{\epsilon}}} 
\\
+ \left( x-c \right) \left( {\textnormal{erfc}} \left( C_\epsilon \right) - {\textnormal{erfc}} \left( A_\epsilon \right) \right) e^{- \frac{| u_b |}{\epsilon}} \bigg] + \rho_d\ {\textnormal{erfc}} \left( D_\epsilon \right)
\end{aligned}
}{
\begin{aligned}
\sqrt{\pi} - {\textnormal{erfc}} \left( A_\epsilon \right) e^{- \frac{| u_b |}{\epsilon}} + {\textnormal{erfc}} \left( B_\epsilon \right) \left( e^{- \frac{| u_b |}{\epsilon}} - 1 \right) + \frac{f(P_\epsilon)}{P_\epsilon\ e^{A_\epsilon^2 + \frac{| u_b |}{\epsilon}}}
\end{aligned}
}
\end{pmatrix}^T
\\
= &\left( 0 , 0 \right).
\end{aligned}
\]
	\end{footnotesize}	

	\item $x > d$ 
	
\begin{footnotesize}
	\[
\begin{aligned}
&\lim_{\epsilon \rightarrow 0} 
\begin{pmatrix}
\frac{
\begin{aligned}
u_a\ \frac{f(P_\epsilon)}{P_\epsilon\ e^{A_\epsilon^2 + \frac{| u_b |}{\epsilon}}} + \frac{(x-b) \left( e^{- \frac{| u_b |}{\epsilon}} - 1 \right)}{2t\ B_\epsilon\ e^{B_\epsilon^2}}
\end{aligned}
}{
\begin{aligned}
\sqrt{\pi} - {\textnormal{erfc}} \left( A_\epsilon \right) e^{- \frac{| u_b |}{\epsilon}} + {\textnormal{erfc}} \left( B_\epsilon \right) \left( e^{- \frac{| u_b |}{\epsilon}} - 1 \right) + \frac{f(P_\epsilon)}{P_\epsilon\ e^{A_\epsilon^2 + \frac{| u_b |}{\epsilon}}}
\end{aligned}
}
\\
----------------------------
\\
\frac{
\begin{aligned}
\rho_c\ \bigg[ \sqrt{2 t \epsilon} \left( \frac{1}{e^{A_\epsilon^2 + \frac{| u_b |}{\epsilon}}} - \frac{1}{2\ e^{C_\epsilon^2 + \frac{| u_b |}{\epsilon}}} \right) + \left( x-c-u_a t \right) \frac{f(P_\epsilon)}{P_\epsilon\ e^{A_\epsilon^2 + \frac{| u_b |}{\epsilon}}} 
\\
+ \left( x-c \right) \left( {\textnormal{erfc}} \left( C_\epsilon \right) - {\textnormal{erfc}} \left( A_\epsilon \right) \right) e^{- \frac{| u_b |}{\epsilon}} \bigg] + \rho_d \left( \sqrt{\pi} - {\textnormal{erfc}} \left( D_\epsilon \right) \right)
\end{aligned}
}{
\begin{aligned}
\sqrt{\pi} - {\textnormal{erfc}} \left( A_\epsilon \right) e^{- \frac{| u_b |}{\epsilon}} + {\textnormal{erfc}} \left( B_\epsilon \right) \left( e^{- \frac{| u_b |}{\epsilon}} - 1 \right) + \frac{f(P_\epsilon)}{P_\epsilon\ e^{A_\epsilon^2 + \frac{| u_b |}{\epsilon}}}
\end{aligned}
}
\end{pmatrix}^T
\\
= &\left( 0 , \rho_d \right).
\end{aligned}
\]
	\end{footnotesize}	

\end{enumerate}

For recovering $\rho$, set $R = \lim_{\epsilon \rightarrow 0} R^\epsilon$. The explicit structure of $R$ under this case is given by
	
\[
R = \begin{cases}
\rho_c \left( x-c-u_a t \right),\ &{ x \in \Big( -\infty , \gamma_{ {a,1} } (t) \Big) },
\\
\rho_c\ \frac{2 \left( x-a-u_a t \right) \left( x-a \right) + \left( a-c \right) u_a t}{u_a t},\ &{ x \in \Big( \gamma_{ {a,1} } (t) , \gamma_{ {a,2} } (t) \Big) },
\\
\rho_c \left( x-c \right),\ &{ x \in \Big( \gamma_{ {a,2} } (t) , \gamma_{ {c} } (t) \Big) },
\\
0,\ &x \in { \big( \gamma_{ {c} } (t) , \gamma_{ {d} } (t) \big) },
\\
\rho_d,\ &{ x \in \Big( \gamma_{ {d} } (t) , \infty \Big) },
\end{cases}
\]
where $\gamma_{ {d} } (s) := d$ for every $s \geq 0$ and the curves $x = \gamma_{ {a,1} } (t)$, $x = \gamma_{ {a,2} } (t)$ and $x = \gamma_{ {c} } (t)$ are defined as follows:

\begin{itemize}

	\item If $u_a \left( b-a \right) \leq u_b$, then

\[
\begin{aligned}
\gamma_{ {a,1} } (s) &:= a + u_a s,\ { s \geq 0 },
\\
\gamma_{a,2} (s) &:= \begin{cases}
a,\ &{ 0 \leq s \leq t_{ {}_{a,l} } := \frac{(b-a)^2}{2 | u_b |} },
\\
\frac{a+b}{2} + \frac{u_b}{b-a} s,\ &{ s \geq t_{ {}_{a,l} }} ,
\end{cases}
\\
\gamma_{c} (s) &:= \begin{cases}
c,\ &{ 0 \leq s \leq t_{ {}_{c,p} } := \frac{(b-c)^2}{2 | u_b |} },
\\
\frac{a+b}{2} + \frac{u_b}{b-a} s,\ &{ s \geq t_{ {}_{c,p} } }.
\end{cases}
\end{aligned}
\]	
	\item If $u_a \left( b-a \right) > u_b$, then

\[
\begin{aligned}
\gamma_{a,1} (s) &:= \begin{cases}
a + u_a s,\ &{ 0 \leq s \leq t_{ {}_{r,q} } := \frac{(b-a)^2}{2 \left( u_a \left( b-a \right) - u_b \right)} },
\\
b + u_a s - \sqrt{2 \left( u_a \left( b-a \right) - u_b \right) s},\ &{ s \geq t_{ {}_{r,q} } },
\end{cases}
\\
\gamma_{a,2} (s) &:= \begin{cases}
a,\ &{ 0 \leq s \leq t_{ {}_{a,l} } },
\\
\frac{a+b}{2} + \frac{u_b}{b-a} s,\ &{ t_{ {}_{a,l} } \leq s \leq t_{ {}_{l,q} } := \frac{(b-a)^2}{2 \left( u_a \left( b-a \right) - u_b \right)} },
\\
b + u_a s - \sqrt{2 \left( u_a \left( b-a \right) - u_b \right) s},\ &{ s \geq t_{ {}_{l,q} } },
\end{cases}
\\
\gamma_{c} (s) &:= \begin{cases}
c,\ &{ 0 \leq s \leq t_{ {}_{c,p} } := \frac{(b-c)^2}{2 | u_b |} },
\\
\frac{a+b}{2} + \frac{u_b}{b-a} s,\ &{ s \geq t_{ {}_{c,p} } }.
\end{cases}
\end{aligned}
\]

\end{itemize}
In either case, we see that any test function $\phi \in C^{\infty}_{c} \left( \textbf{R}^1 \times \left[ 0 , \infty \right) ; \textbf{R}^1 \right)$ will satisfy

\[
\begin{aligned}
\left\langle R_x , \phi \right\rangle = &- \left\langle R , \phi_x \right\rangle
\\
= &- \rho_c \int_{0}^{\infty} \Bigg[ \int_{-\infty}^{\gamma_{a,1} (t)} \left( x-c-u_a t \right) \phi_x\ dx + \int_{\gamma_{a,1} (t)}^{\gamma_{a,2} (t)} \frac{2 \left( x-a-u_a t \right) \left( x-a \right) + \left( a-c \right) u_a t}{u_a t} \phi_x\ dx
\\
&+ \int_{\gamma_{a,2} (t)}^{\gamma_{c} (t)} \left( x-c \right) \phi_x\ dx \Bigg] dt - \rho_d \Bigg[ \int_{0}^{\infty} \int_{d}^{\infty}\ \phi_x\ dx dt \Bigg]
\\
= & \rho_c \Bigg[ \int_{0}^{\infty} \int_{-\infty}^{\gamma_{a,1} (t)} \phi \left( x , t \right) dx dt + \int_{0}^{\infty} \left( c+u_a t - \gamma_{a,1} (t) \right) \phi \left( \gamma_{a,1} (t) , t \right) dt
\\
&+ \int_{0}^{\infty} \int_{\gamma_{a,1} (t)}^{\gamma_{a,2} (t)} \left( \frac{4 \left( x-a \right)}{u_a t} - 2 \right) \phi \left( x , t \right) dx dt
\\
&+ \int_{0}^{\infty} \left( \gamma_{a,2} (t) - 2a + c - 2 - \frac{2 \left( \gamma_{a,2} (t) - a \right)^2}{u_a t} \right) \phi \left( \gamma_{a,2} (t) , t \right) dt 
\\
&- \int_{0}^{\infty} \left( \gamma_{a,1} (t) - 2a + c - 2 - \frac{2 \left( \gamma_{a,1} (t) - a \right)^2}{u_a t} \right) \phi \left( \gamma_{a,1} (t) , t \right) dt
\\
&+ \int_{0}^{\infty} \int_{\gamma_{a,2} (t)}^{\gamma_{c} (t)} \phi \left( x , t \right) dx dt + \int_{0}^{\infty} \left( \gamma_{a,2} (t) - \gamma_{c} (t) \right) \phi \left( \gamma_{a,2} (t) , t \right) dt 
\\
&- \int_{0}^{\infty} \left( \gamma_{a,1} (t) - \gamma_{c} (t) \right) \phi \left( \gamma_{a,1} (t) , t \right) dt \Bigg] + \rho_d \int_{0}^{\infty} \phi \left( d , t \right) dt
\\
= &\left\langle \rho_c \Bigg[ \chi_{ {}_{ \left( -\infty , \gamma_{ {}_{ a,1 } } (t) \right) } } + \left( \frac{4 \left( x-a \right)}{u_a t} - 2 \right) \chi_{ {}_{ \left( \gamma_{ {}_{ a,1 } } (t) , \gamma_{ {}_{ a,2 } } (t) \right) } } + \chi_{ {}_{ \left( \gamma_{ {}_{ a,2 } } (t) , \gamma_{ {}_{ c } } (t) \right) } } \right. 
\\
&\left. + \left( 2 \left( c-a \right) + u_a t + \gamma_{ {}_{ c } } (t) - x - 2 - \frac{2 \left( x-a \right)^2}{u_a t} \right) \delta_{ {}_{ x = \gamma_{ {}_{ a,1 } } (t) } } \right.
\\
&\left. + \left( 2 \left( x-a \right) + c - \gamma_{ {}_{ c } } (t) - 2 - \frac{2 \left( x-a \right)^2}{u_a t} \right) \delta_{ {}_{ x = \gamma_{ {}_{ a,2 } } (t) } } \Bigg] + \rho_d\ \delta_{ x = \gamma_{ {d} } (t) } , \phi \right\rangle.
\end{aligned}
\]
Therefore $\rho$ is given by

\[
\begin{aligned}
\rho = &\rho_c \Bigg[ \chi_{ {}_{ \left( -\infty , \gamma_{ {}_{ a,1 } } (t) \right) } } + \left( \frac{4 \left( x-a \right)}{u_a t} - 2 \right) \chi_{ {}_{ \left( \gamma_{ {}_{ a,1 } } (t) , \gamma_{ {}_{ a,2 } } (t) \right) } } + \chi_{ {}_{ \left( \gamma_{ {}_{ a,2 } } (t) , \gamma_{ {}_{ c } } (t) \right) } }
\\
&+ \left( 2 \left( c-a \right) + u_a t + \gamma_{ {}_{ c } } (t) - x - 2 - \frac{2 \left( x-a \right)^2}{u_a t} \right) \delta_{ {}_{ x = \gamma_{ {}_{ a,1 } } (t) } } 
\\
&+ \left( 2 \left( x-a \right) + c - \gamma_{ {}_{ c } } (t) - 2 - \frac{2 \left( x-a \right)^2}{u_a t} \right) \delta_{ {}_{ x = \gamma_{ {}_{ a,2 } } (t) } } \Bigg] + \rho_d\ \delta_{ x = \gamma_{ {d} } (t) }.
\end{aligned}
\]

\end{proof}

\section*{Appendix}

\begin{enumerate}

	\item Let us prove the following properties of the function erfc:

\begin{enumerate}

	\item For every $z \in \textbf{R}^1$, $\textnormal{erfc} \left( z \right) + \textnormal{erfc} \left( -z \right) = \sqrt{\pi}$
	
	\item $\lim_{z \rightarrow \infty} \textnormal{erfc} \left( z \right) = 0$
	
	\item $\textnormal{erfc} \left( z \right) = \left( \frac{1}{2 z} - \frac{1}{4 z^3} + o \left( \frac{1}{z^3} \right) \right)\ e^{- z^2} \textnormal{ as } z \rightarrow \infty$
	
	\item $\lim_{z \rightarrow \infty}\ z\ \textnormal{erfc} \left( z \right) e^{z^2} = \frac{1}{2}$

\end{enumerate}
For proving the first property, we fix $z \in \textbf{R}^1$ and proceed as follows:

\[
\begin{aligned}
\textnormal{erfc} \left( -z \right) = \int_{-z}^{\infty} e^{-y^2}\ dy &= \int_{-\infty}^{\infty} e^{-y^2}\ dy - \int_{-\infty}^{-z} e^{-y^2}\ dy
\\
&= \sqrt{\pi} - \int_{z}^{\infty} e^{-\eta^2}\ d\eta
\\
&= \sqrt{\pi} - \textnormal{erfc} \left( z \right).
\end{aligned}
\]
The second property follows from the definition of erfc: 

\[
\lim_{z \rightarrow \infty} \textnormal{erfc} \left( z \right) = \lim_{z \rightarrow \infty} \int_{z}^{\infty}\ e^{-s^2}\ ds = 0.
\]
We now verify the third and fourth properties. For any $z > 1$, we can integrate by parts to get

\[
\begin{aligned}
\textnormal{erfc} \left( z \right) 
		&= \int_{z}^{\infty}\ \left( - \frac{1}{2t} \right)\ \frac{d}{dt} \left( e^{-t^2} \right)\ dt
\\
		&= \frac{1}{2z}\ e^{-z^2} + \int_{z}^{\infty}\ \frac{1}{4 t^3}\ \frac{d}{dt} \left( e^{-t^2} \right)\ dt
\\
		&= \left( \frac{1}{2z} - \frac{1}{4 z^3} \right)\ e^{- z^2} + \int_{z}^{\infty}\ \frac{3}{4 t^4}\ e^{-t^2}\ dt		,
\end{aligned}
\]
so that

\[
\begin{aligned}
\left| z^3\ \left[ e^{z^2}\ \textnormal{erfc} \left( z \right) - \left( \frac{1}{2z} - \frac{1}{4 z^3} \right) \right] \right| 
		&\leq - \frac{3}{8 z^2}\ \int_{z}^{\infty}\ \frac{d}{dt}\ \left( e^{z^2 - t^2} \right)\ dt \leq \frac{3}{8 z^2},		
\end{aligned}
\]
and since $\lim_{z \rightarrow \infty} \frac{3}{8 z^2} = 0$, this proves our claim. This last inequality also implies that

\[
z^2\ \left| z\ \textnormal{erfc} \left( z \right)\ e^{z^2} - \frac{1}{2} \right| \rightarrow \frac{1}{4} \textnormal{ as } z \rightarrow \infty,
\]
and hence we conclude the proof of the fourth property $\lim_{z \rightarrow \infty}\ z\ \textnormal{erfc} \left( z \right) e^{z^2} = \frac{1}{2}$.

\item Let us justify the underlying computations for the case $u_a < 0$, $u_b > 0$ in the region $x<a$:

\begin{itemize}

	\item $\lim_{\epsilon \rightarrow 0} e^{- \frac{| u_a |}{\epsilon}} = \lim_{\epsilon \rightarrow 0} e^{- \frac{| u_b |}{\epsilon}} = 0$
	
	\item \textbf{Simplification of $\lim_{\epsilon \rightarrow 0} A_\epsilon\ e^{{A_\epsilon}^2 + \frac{u_a}{\epsilon}} $:}
	
\[
\begin{aligned}
\lim_{\epsilon \rightarrow 0} A_\epsilon\ e^{{A_\epsilon}^2 + \frac{u_a}{\epsilon}} 
	&= \lim_{\epsilon \rightarrow 0} \frac{a-x}{\sqrt{2 t \epsilon}}\ e^{\frac{(a-x)^2 + 2 u_a t}{2 t \epsilon}} = \begin{cases}
	\infty,\ &{x < a - \sqrt{- 2 u_a t}},
	\\
	0,\ &{x > a - \sqrt{- 2 u_a t}};		
	\end{cases}
\end{aligned}
\]	
	
	\item \textbf{Simplification of $\lim_{\epsilon \rightarrow 0} \textnormal{erfc} \left( A_\epsilon \right) e^{\frac{| u_a |}{\epsilon}}$:}
	
\[
\begin{aligned}
\lim_{\epsilon \rightarrow 0} \textnormal{erfc} \left( A_\epsilon \right) e^{\frac{| u_a |}{\epsilon}} 
	&= \lim_{\epsilon \rightarrow 0} \frac{f(A_\epsilon)}{A_\epsilon\ e^{{A_\epsilon}^2 + \frac{u_a}{\epsilon}}} = \begin{cases}
	0,\ &{x < a - \sqrt{- 2 u_a t}},
	\\
	\infty,\ &{x > a - \sqrt{- 2 u_a t}}; 		
	\end{cases}
\end{aligned}
\]	
	
	\item $\lim_{\epsilon \rightarrow 0} A_\epsilon\ e^{{A_\epsilon}^2 + \frac{u_a}{\epsilon}} \cdot \textnormal{erfc} \left( A_\epsilon \right) e^{\frac{| u_a |}{\epsilon}} 
	= \lim_{\epsilon \rightarrow 0} f \left( A_\epsilon \right) = \frac{1}{2}$
	
	\item $\lim_{\epsilon \rightarrow 0} \frac{A_\epsilon\ e^{{A_\epsilon}^2 + \frac{u_a}{\epsilon}}}{B_\epsilon\ e^{{B_\epsilon}^2 + \frac{u_a}{\epsilon}}}
	= \lim_{\epsilon \rightarrow 0} \frac{a-x}{b-x} \cdot e^{\frac{(a-x)^2 - (b-x)^2}{2 t \epsilon}} = 0$
	
	\item $\lim_{\epsilon \rightarrow 0} A_\epsilon\ e^{{A_\epsilon}^2 + \frac{u_a}{\epsilon}} \cdot \textnormal{erfc} \left( B_\epsilon \right) e^{\frac{| u_a |}{\epsilon}} 
	= \lim_{\epsilon \rightarrow 0} \frac{a-x}{b-x} \cdot f(B_\epsilon) \cdot e^{\frac{(a-x)^2 - (b-x)^2}{2 t \epsilon}} = 0$

\end{itemize}
The last computation combined with the inequalities $0 < a-x < c-x < d-x$ implies that

\[
\begin{aligned}
\lim_{\epsilon \rightarrow 0} A_\epsilon\ e^{{A_\epsilon}^2 + \frac{u_a}{\epsilon}} \cdot \textnormal{erfc} \left( C_\epsilon \right) e^{\frac{| u_a |}{\epsilon}} 
	&= \lim_{\epsilon \rightarrow 0} \frac{a-x}{c-x} \cdot f(B_\epsilon) \cdot e^{\frac{(a-x)^2 - (c-x)^2}{2 t \epsilon}} = 0,
\\
\\
\lim_{\epsilon \rightarrow 0} A_\epsilon\ e^{{A_\epsilon}^2 + \frac{u_a}{\epsilon}} \cdot \textnormal{erfc} \left( D_\epsilon \right) e^{\frac{| u_a |}{\epsilon}} 
	&= \lim_{\epsilon \rightarrow 0} \frac{a-x}{d-x} \cdot f(B_\epsilon) \cdot e^{\frac{(a-x)^2 - (d-x)^2}{2 t \epsilon}} = 0.			
\end{aligned}
\]

	\item Here we provide the details of the computations involved in getting the explicit expressions for $u^\epsilon$ and $R^\epsilon$. Let us recall that $u^\epsilon = \frac{V^\epsilon_x}{V^\epsilon}$ and $R^\epsilon = \frac{S^\epsilon}{V^\epsilon}$, where

\[
\begin{aligned}
V^\epsilon_t &= \frac{\epsilon}{2} V^\epsilon_{xx},\ S^\epsilon_t = \frac{\epsilon}{2} S^\epsilon_{xx},
\\
V^\epsilon (x,0) &= \begin{cases}
e^{- \frac{u_a (x-a)}{\epsilon}},\ &x<a,
\\
1,\ &a<x<b,
\\
e^{- \frac{u_b}{\epsilon}},\ &x>b,
\end{cases}
\\
S^\epsilon (x,0) &= \begin{cases}
\rho_c (x-c)\ e^{- \frac{u_a (x-a)}{\epsilon}},\ &x<a,
\\
\rho_c (x-c),\ &a<x<c,
\\
0,\ &c<x<d,
\\
\rho_d\ e^{- \frac{u_b}{\epsilon}},\ &x>d.
\end{cases}
\end{aligned}
\]	
Therefore

\[
\begin{aligned}
V^\epsilon (x,t) = &\frac{1}{\sqrt{2 \pi t \epsilon}} \left[ \int_{-\infty}^{a}\ e^{- \frac{u_a (y-a)}{\epsilon}} \cdot e^{- \frac{(y-x)^2}{2 t \epsilon}}\ dy + \int_{a}^{b}\ e^{- \frac{(y-x)^2}{2 t \epsilon}}\ dy \right. 
\\
+ &\left. \int_{b}^{\infty}\ e^{- \frac{u_b}{\epsilon}} \cdot e^{- \frac{(y-x)^2}{2 t \epsilon}}\ dy \right],
\\
S^\epsilon (x,t) = &\frac{1}{\sqrt{2 \pi t \epsilon}} \left[ \int_{-\infty}^{a}\ \rho_c (y-c)\ e^{- \frac{u_a (y-a)}{\epsilon}} \cdot e^{- \frac{(y-x)^2}{2 t \epsilon}}\ dy \right. 
\\
+ &\left. \int_{a}^{c}\ \rho_c (y-c)\ e^{- \frac{(y-x)^2}{2 t \epsilon}}\ dy + \int_{d}^{\infty}\ \rho_d\ e^{- \frac{u_b}{\epsilon}} \cdot e^{- \frac{(y-x)^2}{2 t \epsilon}}\ dy \right].
\end{aligned}
\]
These integrals can be further simplified as follows:

\begin{itemize}

	\item \textbf{Simplification of $\int_{-\infty}^{a}\ e^{- \frac{u_a (y-a)}{\epsilon}} \cdot e^{- \frac{(y-x)^2}{2 t \epsilon}}\ dy$:}
	
\[
\begin{aligned}
\int_{-\infty}^{a}\ e^{- \frac{u_a (y-a)}{\epsilon}} \cdot e^{- \frac{(y-x)^2}{2 t \epsilon}}\ dy &= e^{\frac{(u_a t)^2 - 2 u_a t (x-a)}{2 t \epsilon}} \int_{-\infty}^{a}\ e^{- \frac{(y-x+u_at)^2}{2 t \epsilon}}\ dy
\\
&= \sqrt{2 t \epsilon}\ e^{\frac{(u_a t)^2 - 2 u_a t (x-a)}{2 t \epsilon}} \textnormal{erfc} \left( \frac{x-a-u_at}{\sqrt{2 t \epsilon}} \right)
\end{aligned}
\]	
	
	\item $\int_a^b\ e^{- \frac{(y-x)^2}{2 t \epsilon}}\ dy = \sqrt{2 t \epsilon}\ \int_{\frac{a-x}{\sqrt{2 t \epsilon}}}^{\frac{b-x}{\sqrt{2 t \epsilon}}}\ e^{- z^2}\ dz = \sqrt{2 t \epsilon} \left( \textnormal{erfc} \left( \frac{x-b}{\sqrt{2 t \epsilon}} \right) - \textnormal{erfc} \left( \frac{x-a}{\sqrt{2 t \epsilon}} \right) \right)$ 
	
	\item $\int_{b}^{\infty}\ e^{- \frac{(y-x)^2}{2 t \epsilon}}\ dy = \sqrt{2 t \epsilon}\ \int_{\frac{b-x}{\sqrt{2 t \epsilon}}}^{\infty}\ e^{- z^2}\ dz = \sqrt{2 t \epsilon}\ \textnormal{erfc} \left( - \frac{x-b}{\sqrt{2 t \epsilon}} \right)$ 
	
	\item \textbf{Simplification of $\int_{-\infty}^{a}\ (y-c)\ e^{- \frac{u_a (y-a)}{\epsilon}} \cdot e^{- \frac{(y-x)^2}{2 t \epsilon}}\ dy$:}
\[
\begin{aligned}
&\int_{-\infty}^{a}\ (y-c)\ e^{- \frac{u_a (y-a)}{\epsilon}} \cdot e^{- \frac{(y-x)^2}{2 t \epsilon}}\ dy 
\\
&= \int_{-\infty}^{a}\ ( y-x+u_at + x-c-u_at )\ e^{- \frac{(y-x)^2 + 2 u_a t (y-x+x-a)}{2 t \epsilon}}\ dy
\\
&=\ e^{\frac{(x-a-u_at)^2 - (x-a)^2}{2 t \epsilon}} \left[ \int_{-\infty}^{a}\ ( y-x+u_at)\ e^{- \frac{(y-x+u_at)^2}{2 t \epsilon}}\ dy \right.
\\
&+ \left. \int_{-\infty}^{a}\ (x-c-u_at)\ e^{- \frac{(y-x+u_at)^2}{2 t \epsilon}}\ dy \right]
\\
&=\ e^{\frac{(x-a-u_at)^2 - (x-a)^2}{2 t \epsilon}} \left[ \sqrt{2 t \epsilon} \int_{-\infty}^{a}\ \frac{y-x+u_at}{\sqrt{2 t \epsilon}} \cdot e^{- \frac{(y-x+u_at)^2}{2 t \epsilon}}\ dy \right.
\\
&+ (x-c-u_at) \left. \int_{-\infty}^{a}\ e^{- \frac{(y-x+u_at)^2}{2 t \epsilon}}\ dy \right]
\\
&=\ e^{\frac{(x-a-u_at)^2 - (x-a)^2}{2 t \epsilon}} \left[ t \epsilon \int_{\frac{x-a-u_at}{\sqrt{2 t \epsilon}}}^{\infty}\ \frac{d}{dz} \left( - e^{-z^2} \right)\ dz \right.
\\
&+ \sqrt{2 t \epsilon}\ (x-c-u_at) \left. \int_{\frac{x-a-u_at}{\sqrt{2 t \epsilon}}}^{\infty}\ e^{- z^2}\ dz \right]
\\
&=\ e^{\frac{(x-a-u_at)^2 - (x-a)^2}{2 t \epsilon}} \left[ t \epsilon\ e^{- \frac{(x-a-u_at)^2}{\sqrt{2 t \epsilon}}} + \sqrt{2 t \epsilon}\ (x-c-u_at)\ \textnormal{erfc} \left( \frac{x-a-u_at}{\sqrt{2 t \epsilon}} \right) \right]
\end{aligned}
\]
	
	\item \textbf{Simplification of $\int_{a}^{c}\ (y-c)\ e^{- \frac{(y-x)^2}{2 t \epsilon}}\ dy$:}
\[
\begin{aligned}
&\int_{a}^{c}\ (y-c)\ e^{- \frac{(y-x)^2}{2 t \epsilon}}\ dy 
\\
&= \int_a^c\ ( y-x + x-c )\ e^{- \frac{(y-x)^2}{2 t \epsilon}}\ dy 
\\
&= \sqrt{2 t \epsilon} \int_a^c\ \frac{y-x}{\sqrt{2 t \epsilon}} \cdot e^{- \frac{(y-x)^2}{2 t \epsilon}}\ dy + (x-c) \int_{a}^{c}\ e^{- \frac{(y-x)^2}{2 t \epsilon}}\ dy
\\
&= t \epsilon \int_{\frac{a-x}{\sqrt{2 t \epsilon}}}^{\frac{c-x}{\sqrt{2 t \epsilon}}}\ \frac{d}{dz} \left( - e^{- z^2} \right)\ dz + \sqrt{2 t \epsilon}\ (x-c) \int_{\frac{a-x}{\sqrt{2 t \epsilon}}}^{\frac{c-x}{\sqrt{2 t \epsilon}}}\ e^{- z^2}\ dz 
\\
&= t \epsilon \left( e^{- \frac{(x-a)^2}{2 t \epsilon}} - e^{- \frac{(x-c)^2}{2 t \epsilon}} \right) + \sqrt{2 t \epsilon}\ (x-c) \left( \textnormal{erfc} \left( \frac{x-c}{\sqrt{2 t \epsilon}} \right) - \textnormal{erfc} \left( \frac{x-a}{\sqrt{2 t \epsilon}} \right) \right)
\end{aligned}
\]

	\item $\int_{d}^{\infty}\ e^{- \frac{(y-x)^2}{2 t \epsilon}}\ dy = \sqrt{2 t \epsilon}\ \int_{\frac{d-x}{\sqrt{2 t \epsilon}}}^{\infty}\ e^{- z^2}\ dz = \sqrt{2 t \epsilon}\ \textnormal{erfc} \left( - \frac{x-d}{\sqrt{2 t \epsilon}} \right)$ 

\end{itemize}
	
\end{enumerate}

\section*{Acknowledgements}

The research was supported by the project "Basic research in physics and multidisciplinary sciences" (Identification \# RIN4001). The author is thankful to Professor K. T. Joseph for suggesting this problem. He also acknowledges the cooperation and support of Professor Anupam Pal Choudhury, Professor Agnid Banerjee and Professor K. T. Joseph during the preparation of this paper.


\begin{thebibliography}{99}

\bibitem{ber-sm-tr-tes-20}

Michiel Bertsch, Flavia Smarrazzo, Andrea Terracina, and Alberto Tesei,
\newblock Radon measure-valued solutions of first order scalar conservation
laws,
\newblock {\em Adv. Nonlinear Anal.}, 9(1):65--107, 2020.

\bibitem{das-1-submitted}

Abhishek Das,
\newblock Explicit structure of the vanishing viscosity limits with initial data consisting of $\delta$-distributions starting from two point sources,
\newblock submitted, January 2022.

\bibitem{das-ktj-submitted}

Abhishek Das and K.~T. Joseph,
\newblock Evolution and interaction of $\delta$-waves in zero-pressure gas dynamics system,
\newblock submitted, June 2020.

\bibitem{dem-serre-91}

F.~Demengel and D.~Serre,
\newblock Nonvanishing singular parts of measure valued solutions for scalar
hyperbolic equations,
\newblock {\em Comm. Partial Differential Equations}, 16(2-3):221--254, 1991.

\bibitem{h1}

E. Hopf,
\newblock The partial differential equation {$u_t+uu_x=\mu u_{xx}$},
\newblock \emph{Comm. Pure Appl. Math.}, \textbf{3}, 201--230, 1950.

\bibitem{ktj-93}

K.~T. Joseph,
\newblock A {R}iemann problem whose viscosity solutions contain {$\delta$}-measures,
\newblock \emph{Asymptotic Anal.}, \textbf{7} (2), 105--120, 1993.

\bibitem{lax1}

P.~D. Lax,
\newblock Hyperbolic systems of conservation laws {II},
\newblock \emph{Comm. Pure Appl. Math.}, \textbf{10}, 537--566, 1957.

\bibitem{liu-pie-84}

Tai-Ping Liu and Michel Pierre,
\newblock Source-solutions and asymptotic behavior in conservation laws,
\newblock {\em J. Differential Equations}, 51(3):419--441, 1984.

\bibitem{z1}

Y.~B.~Zeldovich,
\newblock Gravitational instability: An approximate theory for large density perturbations,
\newblock {\em Astron. Astrophys.} \textbf{5} (1970), 84--89.

\end{thebibliography}
\end{document}